\makeatletter \@addtoreset{equation}{section} \makeatother
\renewcommand\thefigure{\thesection.\@arabic\c@figure}
\renewcommand\thetable{\thesection.\@arabic\c@table}
\newtheorem{theorem}{Theorem}[section]
\newtheorem{lemma}[theorem]{Lemma}
\newtheorem{proposition}[theorem]{Proposition}
\newtheorem{corollary}[theorem]{Corollary}
\theoremstyle{definition}
\newtheorem{remark}[theorem]{Remark}
\newcommand{\mc}[1]{{\mathcal #1}}
\newcommand{\bb}[1]{{\mathbb #1}}
\newcommand{\<}{\langle}
\renewcommand{\>}{\rangle}
\DeclareMathOperator{\supp}{supp}
\DeclareMathOperator{\Var}{Var}
\title[Additive functionals of particle systems]{Scaling limits of additive functionals of interacting particle systems}
\author{Patr\'{\i}cia Gon\c{c}alves}
\address{CMAT, Centro de Matem\'atica da Universidade do Minho, Campus de Gualtar, 4710-057 Braga, Portugal}
\email{patg@math.uminho.pt and Patricia.Goncalves@cims.nyu.edu}
\author{Milton Jara}
\address{{IMPA, Instituto Nacional de Matem\'atica Pura e Aplicada\\
Estrada Dona Castorina 110\\
Jardim Bot\^anico\\
22460-320 Rio de Janeiro-RJ\\
Brazil} and {Ceremade, UMR CNRS 7534,
    Universit\'e de Paris IX - Dauphine,
    Place du Mar\'echal De Lattre De Tassigny
    75775 Paris Cedex 16 - France}
\newline
e-mail: \rm \texttt{mjara@impa.br}}
\subjclass[2000]{60K35,60G60,60F17,35R60}
\keywords{Occupation times, density fluctuations, exclusion process, KPZ equation, Ornstein-Uhlenbeck process}
\begin{document}

\begin{abstract}
Using the renormalization method introduced in \cite{GJ}, we prove what we call the {\em local} Boltzmann-Gibbs principle for conservative, stationary interacting particle systems in dimension $d=1$. As applications of this result, we obtain various scaling limits of additive functionals of  particle systems, like the occupation time of a given site or extensive additive fields of the dynamics. As a by-product of these results, we also construct a novel process, related to the stationary solution of the stochastic Burgers equation.
\end{abstract}

\maketitle

\section{Introduction}

A classical problem in the theory of Markov processes is the study of {\em additive functionals} of the trajectory of the process. More precisely, for a given Markov process $\{\eta_t; t \geq 0\}$ in a state space $\Omega$ and a suitable function $f: \Omega \to \bb R$, we are interested on the long-time behavior of
\[
 \Gamma_t(f) = \int_0^t f(\eta_s)ds
\]
and the possible scaling limits of $\Gamma_t$. Among the vast literature on the subject, we point out the seminal work of Kipnis and Varadhan \cite{KV}, from which we build up the results of this article. Under the assumptions of reversibility and stationarity of $\{\eta_t;t \geq 0\}$, Kipnis-Varadhan's theorem gives a complete characterization of the functions $f$ for which $\Gamma_t$ has a Brownian motion as scaling limit. It is known that for interacting particle systems in low dimensions, for very simple functions $f$ the process $\Gamma_t$ does not have a Brownian motion as scaling limit, either because reversibility does not hold (see \cite{CG} for instance) or because $f$ does not verify the assumptions of Kipnis-Varadhan's theorem (see \cite{Kip} for instance). In these works the authors considered the case on which the function $f$ is the number of particles at the origin. Since in those models the particle system admits at most one particle per site, the process $\Gamma_t(f)$ is called the {\em occupation time} of the origin.

In this work we address the question of obtaining the scaling limit of the process $\{\Gamma_t(f); t \geq 0\}$ in the case of one-dimensional, conservative, stationary interacting particle systems. It is known \cite{SX} that in dimension $d \geq 3$ and for any local function $f$, the scaling limit of $\Gamma_t(f)$ is given by a Brownian motion. In a series of works (\cite{Set1,Set2,Set3}), the author explores various cases on which the scaling limit of occupation times is not Brownian, obtaining in some cases a functional central limit theorem and in other cases bounds on the variance close to the conjectured ones. In \cite{Ber} and \cite{Ber2} the author obtain some non-matching upper and lower bounds for the occupation time of asymmetric particle systems, which precludes a Brownian scaling limit. In \cite{JQS}, the authors obtained the scaling limit of additive functionals of a zero-range process, extending the results of \cite{Kip} and \cite{Set1} to the zero-range process. The results of \cite{Set1} and of \cite{JQS} are based on the martingale method, which consists in writing down the process $\Gamma_t(f)$ as a martingale plus a vanishing term. Scaling limits of the process $\Gamma_t(f)$ then follow from standard results about martingale scaling limits.

Our approach is essentially different. In \cite{GJ} an important technical novelty, the so-called {\em second-order Boltzmann-Gibbs principle} was introduced. The idea was to extend the one-block and the two-blocks setup introduced in \cite{GPV} to the fluctuation level. Following the proof in \cite{GJ}, we obtain here what we call the {\em local} Boltzmann-Gibbs principle, which is of both practical and theoretical interest. The local Boltzmann-Gibbs principle states that the process $\Gamma_t(f)$ is well approximated by the density of particles on a box of size $\varepsilon \sqrt t$ around the origin, integrated up to time $t$. In particular, if a scaling limit in the diffusive scaling is available for the density of particles, a scaling limit for $\Gamma_t(f)$ can be extracted from it through approximation arguments.

Let $n$ be a scaling parameter. In order to obtain the scaling limit of the density of particles, a diffusive scaling must be introduced. This is done rescaling space by $1/n$ and time by $n^2$.
For diffusive, conservative particle systems, the scaling limit of the density of particles is given by an infinite-dimensional Ornstein-Uhlenbeck process.
Starting from this result and using an approximation procedure, we can prove that for diffusive systems, the scaling limit of the process $\Gamma_t(f)$ is given by a fractional Brownian motion of Hurst index $H=3/4$. In particular, we solve one of the open questions in \cite{Set1}, namely the scaling limit of $\Gamma_t$ for the mean-zero exclusion process in dimension $d=1$. The scaling limit of the density of particles is also understood in other two situations.  When the motion of the particles is weakly asymmetric, namely when the particles have a drift of order $O(1/n)$ the scaling limit of the density of particle is given by a infinite-dimensional Ornstein-Uhlenbeck process with drift. In that case we prove that the scaling limit of $\Gamma_t(f)$ is given by a Gaussian process with stationary increments, with some prescribed variance.

A case which has received a lot of attention recently (see \cite{ACQ,GJ} and references therein) is the so-called {\em KPZ scaling}. In this case, particles have a drift of order $O(1/\sqrt n)$ and density fluctuations are observed along characteristic lines of the system. The fluctuations of the density are governed by solutions of a stochastic Burgers equation \cite{BG} which is formally the derivative of the celebrated {\em KPZ equation}, which arose as a continuum model for the stochastic growth of interfaces. We prove that there exists a process $\{\mc Z_t; t \geq 0\}$ (for which we basically know nothing) which can be constructed for any energy solution of the KPZ equation (as introduced in \cite{GJ}) such that the scaling limit of $\Gamma_t(f)$ is given by $c(f)\mc Z_t$, where $c(f)$ is an explicit constant depending on $f$. It seems that there is no previous mention of this process neither in the physics nor in the mathematics literature. The description of the limiting process $\mc Z_t$ remains an open problem.

In \cite{Ass} a quadratic field associated to the Ornstein-Uhlenbeck process was introduced. It was also proved that the scaling limit of certain extensive additive functionals of the symmetric simple exclusion process is given by this quadratic field. In \cite{GJ} it was proved that the so-called energy condition implies the existence of the quadratic field associated to a given stochastic field. In this article we prove that scaling limits of extensive additive fields of one-dimensional, conservative particle systems are given either by linear functionals of the limiting density field or by the quadratic fields associated to the limiting density field, therefore solving an open problem posed by the author in \cite{Ass}.

In order to have a good compromise between generality and simplicity, we study in this paper a particular one-dimensional system, known in the literature as a {\em lattice gas} dynamics at infinite temperature or also as the speed-change exclusion process. From the technical point of view, this system is non-gradient. Fortunately, the non-gradient method is only needed to obtain the scaling limit of the density of particles, which has already been done in \cite{Cha2} and \cite{Sel}. Our method requires two main assumptions on the dynamics, namely a sharp estimate on the spectral gap of the dynamics restricted to finite boxes and a second-order equivalence of ensembles for the invariant measures of the system. These two assumptions are the same needed in \cite{Cha2}, \cite{Sel} in order to derive the scaling limit of the density of particles, so in that sense our result is the best possible.

The paper is organized in the following way. In Section \ref{s1} we fix the notation used throughout the paper and we state our main results about scaling limits of occupation times and extensive additive functionals. In Section \ref{s2} we review the spectral gap inequality, the equivalence of ensembles and Kipnis-Varadhan's inequality, which will be the building blocks of the proofs in Section \ref{s3}. In Section \ref{s3} we state and prove the local Boltzmann-Gibbs principle. The proof follows the renormalization procedure introduced in \cite{GJ}. In Section \ref{s4} we prove the theorems stated in Sections \ref{s1.3} and \ref{s1.4}. And in Section \ref{s5} we roughly explain how to extend the main results to the context of mean zero and weakly asymmetric exclusion processes.

\section{Notations and results}
\label{s1}
\subsection{Lattice gas dynamics}
\label{s1.1}
Let $\Omega = \{0,1\}^{\bb Z}$ be the state space of a Markov process which we define below. We denote by $\eta = \{ \eta(x); x \in \bb Z\}$ the elements of $\Omega$. We say that a function $f : \Omega \to \bb R$ is {\em local} if there exists a non-negative integer $R$ such that $f(\eta) = f(\xi)$ whenever $\eta(x) =\xi(x)$ for every $x \in \bb Z$ with $|x|\leq R$. For each $x \in \bb Z$ we denote by $\tau_x$ the usual translation (or shift) in $\bb Z$ of magnitude $x$. In other words, $\tau_x: \Omega \to \Omega$ is defined by $\tau_x \eta(z) = \eta(z-x)$. For a function $f: \Omega \to \bb R$, $\tau_x f$ is defined in an analogous way: $\tau_x f(\eta) = f(\tau_x \eta)$ for any $\eta \in \Omega$. Let $r: \Omega \to \bb R$ be a local function satisfying the following conditions:
\begin{itemize}
 \item[i)]{\em Ellipticity}. There is a constant $\varepsilon_0 >0$ such that $\varepsilon_0 \leq r(\eta) \leq \varepsilon_0^{-1}$ for any $\eta \in \Omega$.
 \item[ii)]{\em Reversibility}. For any $\eta,\xi \in \Omega$ such that $\eta(x)= \xi(x)$ for every $x \neq 0,1$, we have $r(\eta) = r(\xi)$.
\end{itemize}

Let us define $r_x =\tau_x r$. For $x,y \in \bb Z$ and $\eta \in \Omega$ we define $\eta^{x,y} \in \Omega$ as
\[
 \eta^{x,y}(z)=
\begin{cases}
 \eta(y), & z=x\\
\eta(x), & z=y\\
\eta(z), &z \neq x,y.
\end{cases}
\]
For $x,y \in \bb Z$ and $f: \Omega \to \bb R$ we define $\nabla_{x,y} f :\Omega \to \bb R$ as $\nabla_{x,y} f(\eta) = f(\eta^{x,y}) -f(\eta)$.
The {\em lattice gas} with interaction rate $r$ is defined as the Markov process $\{\eta_t; t \geq 0\}$ with state space $\Omega$ and generated by the operator $L$, whose action over local functions $f:\Omega \to \bb R$ is given by
\[
 L f = \sum_{x \in \bb Z} r_x \nabla_{x,x+1} f.
\]
Notice that, since $f$ is local, only a finite number of terms in the previous sum are different from zero. The process $\{\eta_t; t \geq 0\}$ is well defined, thanks to the ellipticity and translation invariance of the family of rates $\{r_x; x \in \bb Z\}$ (see Section 1.3 of \cite{Lig}).

Notice that particles are neither created nor annihilated by the dynamics of $\eta_t$. The process $\{\eta_t; t \geq 0\}$ has a family of invariant measures parameterized by the density of particles. For each $\rho \in [0,1]$, let $\nu_\rho$ denote the product Bernoulli measure in $\Omega$ of density $\rho$, that is, $\nu_\rho$ is the only probability measure in $\Omega$ such that
\[
 \nu_\rho\big\{ \eta(x) = 1 \text{ for any } x \in A\big\} =\rho^{|A|}
\]
for every finite set $A \subseteq \bb Z$. Here $|A|$ denotes the cardinality of the set $A$. Thanks to the reversibility condition, the measures $\{\nu_\rho;  \rho \in [0,1]\}$ are invariant and reversible under the evolution of $\{\eta_t; t \geq 0\}$. Thanks to the ellipticity condition, these measures are ergodic as well. Notice that $\int \eta(x) d\nu_\rho = \rho$ for any $x \in \bb Z$ and any $\rho \in [0,1]$, justifying the denomination ``density of particles'' for the parameter $\rho$.

In this article we are interested in the evolution of some observables of the process $\{\eta_t;t \geq 0\}$ starting from an equilibrium measure $\nu_\rho$. Therefore, from now on we fix $\rho \in (0,1)$ and we will always consider the process $\{\eta_t; t \geq 0\}$ with initial distribution $\nu_\rho$. Notice that when $\rho =0$ or $\rho =1$ the evolution of $\eta_t$ is trivial.

Let $T>0$ be fixed from now on and up to the end of the article. This convention will simplify the notation and the exposition. For a given Polish space $X$ we denote by $\mc C([0,T],X)$ the space of continuous trajectories from $[0,T]$ to $X$, and we denote by $\mc D([0,T],X)$ the space of c\`adl\`ag trajectories from $[0,T]$ to $X$. We denote by $\bb P_\rho$ the distribution in $\mc D([0,T],X)$ of the process $\{\eta_t; t \in [0,T]\}$ with initial distribution $\nu_\rho$, and we denote by $\bb E_\rho$ the expectation with respect to $\bb P_\rho$.

\subsection{Mean-zero exclusion process}
\label{s1.2}
One of the most popular non-reversible, diffusive interacting particle systems is the mean zero exclusion process, which we define as follows.
Let $p: \bb Z \setminus \{0\} \to [0,1]$ be a probability measure. For notational convenience, we define $p(0)=0$. We assume that $p(\cdot)$ has finite range, that is, there exists $M >0$ such that $p(z)=0$ whenever $|z| > M$ and that $p(\cdot)$ is irreducible, that is, $\bb Z = \mathrm{span}\{ z \in \bb Z; p(z)>0\}$. We also assume that $p(\cdot)$ has {\em mean zero}, namely,
\[
 \sum_{z \in \bb Z} z p(z) =0.
\]
A simple example of a probability $p(\cdot)$ satisfying these properties is given by $p(1)= 2/3$, $p(-2)=1/3$, $p(z)=0$ if $z \neq -2,1$. The mean-zero exclusion process associated to $p(\cdot)$ is the Markov process $\{\eta_t^{\mathrm{ex}}; t \geq 0\}$ generated by the operator $L_{\mathrm{ex}}$ whose action over local functions $f:\Omega \to \bb R$ is given by
\[
 L_{\mathrm{ex}} f =\sum_{x,y \in \bb Z} p(y-x) r_{x,y} \nabla_{x,y} f,
\]
where we have written $r_{x,y}(\eta) = \eta(x)(1-\eta(y))$.
Like in the case of the lattice gas described in Section \ref{s1.1}, the existence of this process is shown in \cite{Lig}. The measures $\{\nu_\rho; \rho \in [0,1]\}$ are invariant and ergodic under the evolution of $\eta_t^{\mathrm{ex}}$ but they are not necessarily reversible. In fact, a straightforward computation shows that the adjoint of $L_{\mathrm{ex}}$ in $ L^2(\nu_\rho)$ is the generator of the mean-zero exclusion process associated to the probability $p^*(z)= p(-z)$. Therefore, $\nu_\rho$ is reversible under the evolution of $\eta_t^{\mathrm{ex}}$ if and only if $p(\cdot)$ is symmetric. We point out that the results of this article do not rely on the reversibility of the invariant measure $\nu_\rho$.

\subsection{Infinite-dimensional Ornstein-Uhlenbeck process}
\label{s1.3}
Let $\mc S(\bb R)$ be the Schwartz space of test functions and let $\mc S'(\bb R)$ be the topological dual of $\mc S(\bb R)$, corresponding to the space of tempered distributions in $\bb R$. For $u, v \in L^2(\bb R)$, let $\<u,v\> = \int u(x) v(x) dx$ be the inner product in $ L^2(\bb R)$. We write $\|u\| = \<u,u\>^{1/2}$ for the $L^2(\bb R)$-norm of $u$.

We say that a process $\{ \mc M_t; t \in [0,T]\}$ with trajectories in $\mc C([0,T],\mc S'(\bb R))$ is a $\mc S'(\bb R)$-valued Brownian motion if for any $u \in \mc S(\bb R)$, the process $\{\mc M_t(u); t \in [0,T]\}$ is a Brownian motion of mean zero and variance $\|u\|^2$.

Let $D, \sigma>0$ be fixed. We say that a process $\{\mc Y_t; t \in [0,T]\}$ is a solution of the infinite-dimensional Ornstein-Uhlenbeck equation
\begin{equation}
\label{ec1}
 d \mc Y_t = D \Delta \mc Y_t dt + \sigma \nabla d \mc M_t
\end{equation}
if for any smooth trajectory $t \mapsto u_t$ from $[0,T]$ to $\mc S(\bb R)$, the process
\[
 \mc Y_t(u_t) -\mc Y_0(u_0) - \int_0^t \mc Y_s((\partial_s + D \Delta)u_s) ds
\]
is a martingale of quadratic variation $\sigma^2 \int_0^t \|\nabla u_s\|^2 ds$. According to \cite{HS}, the Cauchy problem for \eqref{ec1} is well-posed for any distribution $\mc Y_0$ in $\mc S'(\bb R)$.

We say that a $\mc S'(\bb R)$-valued random variable $\mc Y_0$ is a {\em white noise} of variance $\sigma^2$ if for any $u \in \mc S(\bb R)$, the real-valued random variable $\mc Y_0(u)$ has a normal distribution of mean zero and variance $\sigma^2\|u\|^2$. Straightforward computations show that \eqref{ec1} has a unique invariant distribution, given by a white noise of variance $\sigma^2/2D$. We say that a solution $\{\mc Y_t; t \in [0,T]\}$ of \eqref{ec1} is {\em stationary} if $\mc Y_0$ is a white noise of variance $\sigma^2/2D$.

From now on, we fix a stationary solution $\{\mc Y_t; t \in [0,T]\}$ of \eqref{ec1}. We will denote by $\bb P$ the underlying probability measure and by $\bb E$ the expectation with respect to $\bb P$. First we notice that $\mc Y_t(u)$ is well defined for any $u \in L^2(\bb R)$: it is enough to take $\{u^n;  n \in \bb N\} \subseteq \mc S(\bb R)$ such that $\|u^n-u\| \to 0$ as $n \to \infty$ and to notice that $\{\mc Y_t(u^n);n \in \bb N\}$ is a Cauchy sequence in $L^2(\bb P)$. Let $h: \bb R \to \bb R$ be the function $h(x):=\mathbbm 1(0 <x \leq 1)$. For $\varepsilon \in (0,1)$ we denote by $i_\varepsilon$ the function $x  \mapsto \varepsilon^{-1} h(x\varepsilon^{-1})$. The sequence $\{i_\varepsilon; \varepsilon \in (0,1)\}$ is an {\em approximation of the identity} in the sense that for any $f \in \mc S(\bb R)$, $\<i_\varepsilon,f\> \to f(0)$ as $\varepsilon \to \infty$. For $x \in \bb R$, we will use the notation $i_\varepsilon(x)$ for the function $y \mapsto \varepsilon^{-1} h((y-x)\varepsilon^{-1})$.

 For each $\varepsilon \in (0,1)$, let us define the real-valued process $\{\mc Z_t^\varepsilon; t \in [0,T]\}$ as
\begin{equation}
\label{fBM}
 \mc Z_t^\varepsilon = \int_0^t \mc Y_s(i_\varepsilon) ds.
\end{equation}

\begin{theorem}
 \label{t1} The process $\{\mc Z_t^\varepsilon; t \in [0,T]\}$ converges in distribution with respect to the uniform topology of $\mc C([0,T], \bb R)$, as $\varepsilon \to 0$, to a fractional
Brownian motion $\{\mc Z_t; t \in [0,T]\}$ of Hurst exponent $H = 3/4$.
\end{theorem}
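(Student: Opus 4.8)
The plan is to exploit that $\{\mc Z_t^\varepsilon; t \in [0,T]\}$ is, for each $\varepsilon$, a centered Gaussian process, being a time-integrated linear functional of the Gaussian field $\{\mc Y_s\}$. For a family of centered Gaussian processes, convergence in distribution reduces to two ingredients: convergence of the covariance functions (which yields convergence of the finite-dimensional distributions) together with tightness in $\mc C([0,T],\bb R)$. Since a centered continuous Gaussian process is characterized by its covariance, and a fractional Brownian motion of Hurst exponent $H$ is precisely the centered Gaussian process with covariance $\frac{V}{2}\big(t^{2H}+s^{2H}-|t-s|^{2H}\big)$, the whole theorem comes down to checking that the limiting covariance has this form with $H=3/4$, plus an \emph{a priori} tightness bound.

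First I would compute the two-time covariance of the stationary solution. Writing $T_\tau = e^{\tau D \Delta}$ for the heat semigroup and using stationarity together with the Markov (semigroup) property of \eqref{ec1}, one gets, for $0 \le s \le s'$ and $u,v \in L^2(\bb R)$,
\[
 \bb E\big[\mc Y_s(u)\mc Y_{s'}(v)\big] = \tfrac{\sigma^2}{2D}\<u, T_{s'-s}v\>,
\]
the white-noise variance $\sigma^2/2D$ being the stationary value and the factor $T_{s'-s}$ coming from the deterministic evolution of the conditional mean. Specializing to $u=v=i_\varepsilon$ and writing the kernel of $T_\tau$ as $p_\tau(x,y)=(4\pi D\tau)^{-1/2}\exp(-(x-y)^2/4D\tau)$, we have
\[
 \bb E\big[\mc Y_s(i_\varepsilon)\mc Y_{s'}(i_\varepsilon)\big] = \tfrac{\sigma^2}{2D}\int_{\bb R}\int_{\bb R} i_\varepsilon(x)\, p_{s'-s}(x,y)\, i_\varepsilon(y)\, dx\, dy.
\]
Since $i_\varepsilon$ is a probability density and $p_\tau(x,y)\le (4\pi D\tau)^{-1/2}$, this quantity is bounded by $a|s'-s|^{-1/2}$ uniformly in $\varepsilon$, with $a=\sigma^2/(2D\sqrt{4\pi D})$; and because $i_\varepsilon \to \delta_0$ and $p_\tau$ is continuous, it converges, for each fixed $\tau=|s'-s|>0$, to $\tfrac{\sigma^2}{2D}p_\tau(0,0)=a\,|s'-s|^{-1/2}$. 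The singularity $|s'-s|^{-1/2}$ is integrable on $[0,T]^2$, so dominated convergence gives
\[
 \lim_{\varepsilon \to 0}\bb E\big[\mc Z_t^\varepsilon \mc Z_{t'}^\varepsilon\big] = a\int_0^t\!\!\int_0^{t'}\! |s-s'|^{-1/2}\, ds\, ds'.
\]
A direct evaluation of this double integral (differentiating twice in $t,t'$) yields $\frac{a}{2H(2H-1)}\big(t^{2H}+t'^{2H}-|t-t'|^{2H}\big)$ with $2H-2=-1/2$, i.e. $H=3/4$; this is exactly the fBm covariance, identifying the finite-dimensional limits.

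For tightness I would invoke Kolmogorov's criterion. Using Gaussianity, $\bb E\big[|\mc Z_t^\varepsilon-\mc Z_{t'}^\varepsilon|^{2k}\big]=C_k\big(\bb E[(\mc Z_t^\varepsilon-\mc Z_{t'}^\varepsilon)^2]\big)^k$, and by the same uniform covariance bound as above,
\[
 \bb E\big[(\mc Z_t^\varepsilon-\mc Z_{t'}^\varepsilon)^2\big]=\int_{t'}^t\!\!\int_{t'}^t \bb E\big[\mc Y_s(i_\varepsilon)\mc Y_{s''}(i_\varepsilon)\big]\, ds\, ds'' \le a\int_{t'}^t\!\!\int_{t'}^t |s-s''|^{-1/2}\, ds\, ds'' = C\,|t-t'|^{3/2},
\]
uniformly in $\varepsilon$. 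Hence $\bb E[|\mc Z_t^\varepsilon-\mc Z_{t'}^\varepsilon|^{2k}]\le C_k'|t-t'|^{3k/2}$, which for $k \ge 2$ exceeds the threshold $|t-t'|^{1+\delta}$ needed for tightness in $\mc C([0,T],\bb R)$; together with $\mc Z_0^\varepsilon=0$ this gives tightness uniformly in $\varepsilon$. Combining tightness with the convergence of the covariances identifies every limit point as the centered continuous Gaussian process with the fBm covariance above, that is, as the fractional Brownian motion of Hurst exponent $3/4$, proving the claim.

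I expect the main obstacle to be the rigorous justification of the covariance identity and the passage to the limit near the diagonal $s=s'$: one must handle the $|s-s'|^{-1/2}$ singularity of the limiting kernel (integrable, but unbounded) and verify that the uniform-in-$\varepsilon$ domination $p_\tau(x,y)\le(4\pi D\tau)^{-1/2}$ together with $\|i_\varepsilon\|_{L^1}=1$ really controls the integrand, so that dominated convergence applies on all of $[0,T]^2$. The tightness estimate, by contrast, is a routine consequence of Gaussianity once this uniform bound is available.
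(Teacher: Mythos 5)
Your proof is correct, but it takes a genuinely different route from the paper's. You treat Theorem \ref{t1} as a purely Gaussian statement: using the semigroup representation of the stationary solution of \eqref{ec1} you compute the two-time covariance $\bb E[\mc Y_s(u)\mc Y_{s'}(v)]=\tfrac{\sigma^2}{2D}\<u,T_{s'-s}v\>$ explicitly, pass to the limit in the covariance of $\mc Z_t^\varepsilon$ by dominated convergence against the integrable kernel $|s-s'|^{-1/2}$, and identify the fBm covariance (your constant is consistent with Theorem \ref{twasep1} at zero drift), with tightness from the uniform bound $\bb E[(\mc Z_t^\varepsilon-\mc Z_{t'}^\varepsilon)^2]\leq C|t-t'|^{3/2}$ and Gaussian moment equivalence. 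The paper never computes a covariance: it imports the uniform estimate $\bb E[(\mc Z_t^\varepsilon-\mc Z_t^\delta)^2]\leq c\varepsilon t$ (the energy estimate \eqref{en.est}) from the \emph{particle system}, via the local Boltzmann--Gibbs principle (Theorem \ref{t7}), the convergence of the density field (Proposition \ref{p1}), and the preservation of $L^2$-bounds under weak convergence; it then interpolates this against the crude Cauchy--Schwarz bound to get $\bb E[(\mc Z_t^\varepsilon)^2]\leq ct^{3/2}$ (Lemma \ref{l1}), obtains tightness by Kolmogorov--Centsov, shows $\{\mc Z_t^\varepsilon\}_\varepsilon$ is Cauchy in $L^2(\bb P)$ at each fixed $t$ (so the limit point is unique), and identifies the limit abstractly through continuity, Gaussianity and the $3/4$-self-similarity inherited from the scale invariance of $\mc Y$. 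Each approach buys something: yours is self-contained (no particle system needed), pins down the explicit variance constant, and confronts the diagonal singularity head-on; the paper's is deliberately robust, using only the energy estimate and scaling, which is exactly why it transfers verbatim to the stationary stochastic Burgers solution in Theorem \ref{t1forkpz}, where your linear-semigroup covariance computation is unavailable, and why the fixed-time $L^2$ convergence it provides can be reused in the proof of Theorem \ref{t4}. The one point you should make fully explicit is the covariance identity itself: it follows from the martingale formulation by taking $u_r = T_{s'-r}v$, so that $\mc Y_{s'}(v)-\mc Y_s(T_{s'-s}v)$ is a martingale increment uncorrelated with $\mc Y_s(u)$, together with Gaussianity and uniqueness of the stationary solution from \cite{HS}; as you anticipated, this is the only step requiring care, and it is standard.
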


Let us denote by $\delta_0$ the Dirac distribution at $x=0$. This theorem is telling us that, although $\mc Y_t(\delta_0)$ is not well defined, the integral $\int_0^t \mc Y_s(\delta_0)ds$ can be defined through an approximation procedure.

For each $\varepsilon \in (0,1)$, let us define now the $\mc S'(\bb R)$-valued process $\{\mc A_t^\varepsilon; t \in [0,T]\}$ as
\begin{equation}
\label{defdea}
\mc A_t^\varepsilon(u) = \int_0^t \!\!\int\limits_{\bb R} \Big\{\mc Y_s(i_\varepsilon(x))^2-\frac{\sigma^2}{2D\varepsilon}\Big\}  u(x) dx ds
\end{equation}
for any $u \in \mc S(\bb R)$ and any $t \in [0,T]$.

\begin{theorem}
 \label{t2}
There exists a process $\{\mc A_t; t \in [0,T]\}$ with trajectories in $\mc C([0,T],\mc S'(\bb R))$ such that $\{\mc A_t^\varepsilon; t \in [0,T]\}$ converges in distribution with respect to the uniform topology of $\mc C([0,T],\mc S'(\bb R))$, as $\varepsilon \to 0$, to the process $\{\mc A_t; t \in [0,T]\}$.
\end{theorem}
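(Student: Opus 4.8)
The plan is to exploit the fact that, after the exact centering, each $\mc A_t^\varepsilon(u)$ is a space-time integral of Wick squares of the Gaussian field $\mc Y$, and hence belongs to the second Wiener chaos. Since $\mc Y_s(i_\varepsilon(x))$ is centered Gaussian with variance $\frac{\sigma^2}{2D}\|i_\varepsilon\|^2=\frac{\sigma^2}{2D\varepsilon}$ (because $h^2=h$), the constant subtracted in \eqref{defdea} is precisely $\bb E[\mc Y_s(i_\varepsilon(x))^2]$, so that $\mc Y_s(i_\varepsilon(x))^2-\frac{\sigma^2}{2D\varepsilon}$ is a centered element of the second chaos. Convergence in distribution within a fixed chaos is equivalent to $L^2(\bb P)$-convergence of the defining variables, and Gaussian hypercontractivity makes all $L^p$ norms comparable to the $L^2$ norm; this reduces the whole theorem to a single covariance computation together with a moment bound.

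First I would record the two-point function of the stationary solution. Writing $e^{\tau D\Delta}$ for the heat semigroup and using stationarity together with the mild formulation of \eqref{ec1}, one gets for $s\le s'$ and $u,v\in L^2(\bb R)$
\[
 \bb E[\mc Y_s(u)\mc Y_{s'}(v)] = \frac{\sigma^2}{2D}\<u, e^{(s'-s)D\Delta}v\>.
\]
Combined with the Wick--Isserlis identity $\bb E[(X^2-\bb EX^2)(Y^2-\bb EY^2)]=2\,\bb E[XY]^2$ for centered jointly Gaussian $X,Y$, this yields the exact covariance
\[
 \bb E[\mc A_t^\varepsilon(u)\,\mc A_{t'}^{\varepsilon'}(v)] = 2\int_0^t\!\!\int_0^{t'}\!\!\int\limits_{\bb R}\int\limits_{\bb R} \big(\bb E[\mc Y_s(i_\varepsilon(x))\mc Y_{s'}(i_{\varepsilon'}(x'))]\big)^2 u(x)v(x')\,dx\,dx'\,ds\,ds'.
\]
As $\varepsilon,\varepsilon'\to0$ the inner expectation converges to $\frac{\sigma^2}{2D}g_{|s-s'|}(x-x')$, where $g_\tau$ denotes the kernel of $e^{\tau D\Delta}$.

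The heart of the argument is the \emph{energy estimate}. By Young's inequality for convolutions, $\int\int g_\tau(x-x')^2 u(x)v(x')\,dx\,dx' \le \|g_\tau\|_{L^2}^2\,\|u\|\,\|v\| = \frac{C}{\sqrt\tau}\|u\|\,\|v\|$, and, crucially, the same bound holds uniformly in $\varepsilon,\varepsilon'$ for the mollified kernels, since convolution against the probability densities $i_\varepsilon$ cannot increase the $L^2$ norm. Because $\int_0^t\int_0^{t'}|s-s'|^{-1/2}\,ds\,ds'<\infty$, the limiting covariance $\Phi_{t,t'}(u,v)$ is finite and, by dominated convergence, $\bb E[(\mc A_t^\varepsilon(u)-\mc A_t^{\varepsilon'}(u))^2]\to0$ as $\varepsilon,\varepsilon'\to0$; this gives $L^2$-convergence of the finite-dimensional distributions to those of a second-chaos process $\mc A_t$ with covariance $\Phi$. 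The same estimate applied to increments gives $\bb E[(\mc A_t^\varepsilon(u)-\mc A_{t'}^\varepsilon(u))^2]\le C\|u\|^2|t-t'|^{3/2}$ uniformly in $\varepsilon$; hypercontractivity upgrades this to $\bb E[|\mc A_t^\varepsilon(u)-\mc A_{t'}^\varepsilon(u)|^{2k}]\le C_k\|u\|^{2k}|t-t'|^{3k/2}$, so Kolmogorov's criterion yields tightness of $\{\mc A_\cdot^\varepsilon(u)\}$ in $\mc C([0,T],\bb R)$ for each fixed $u$.

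Finally I would lift this to the $\mc S'(\bb R)$-valued setting by Mitoma's criterion: tightness in $\mc C([0,T],\mc S'(\bb R))$ follows from tightness of $\{\mc A_\cdot^\varepsilon(u)\}$ for every $u\in\mc S(\bb R)$, which we have just established, together with the continuity of the linear map $u\mapsto\mc A_t^\varepsilon(u)$ controlled by $\|u\|$. Combining tightness with the convergence of finite-dimensional distributions identifies the limit as the announced continuous process $\{\mc A_t; t\in[0,T]\}$. I expect the main obstacle to be the uniform-in-$\varepsilon$ control of the mollified two-point functions, which must be strong enough to drive both the dominated-convergence step and the tightness bound simultaneously; everything else is a packaging of standard second-chaos and Mitoma arguments. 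One may alternatively phrase the finiteness of $\Phi$ as a verification of the energy condition of \cite{GJ}, which then furnishes the quadratic field abstractly.
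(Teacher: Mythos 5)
Your proof is correct, but it takes a genuinely different route from the paper. The paper never computes a Gaussian covariance: it obtains the key energy estimate $\bb E[(\mc A_t^\varepsilon(u)-\mc A_t^\delta(u))^2]\le c\,\varepsilon t\|u\|^2$ by going back to the particle system --- it introduces the discrete fields $\mc A_t^{n,\varepsilon}$, bounds $\bb E_\rho[(\mc A_t^{n,\varepsilon}(u)-\mc A_t^{n,\delta}(u))^2]$ via the second-order Boltzmann--Gibbs principle (Theorem \ref{t6}), and transfers the bound to the Ornstein--Uhlenbeck process through the weak convergence of Proposition \ref{p1}. It then obtains the moment bound $\bb E[\mc A_t^\varepsilon(u)^2]\le ct^{3/2}$ by interpolating the energy estimate against the crude Cauchy--Schwarz bound $ct^2\delta^{-1}\|u\|^2$ with $\delta=\sqrt t$, and concludes, exactly as you do, with stationary increments, Kolmogorov--Centsov and Mitoma. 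You instead derive both the $L^2$-Cauchy property and the $|t-t'|^{3/2}$ increment bound in one stroke from the explicit second-chaos covariance built on $\bb E[\mc Y_s(u)\mc Y_{s'}(v)]=\frac{\sigma^2}{2D}\< u,e^{|s'-s|D\Delta}v\>$, the Isserlis identity, the bound $\|g_\tau\|_{L^2}^2\le C\tau^{-1/2}$, and Young's inequality (the observation that convolving with the probability densities $i_\varepsilon$ does not increase $L^2$ norms is indeed the crucial uniformity). Your argument is self-contained, stays entirely at the level of the limit process, and produces the exact limiting covariance; it is close in spirit to Assing's original proof in \cite{Ass}, which the paper deliberately bypasses. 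What the paper's detour buys is robustness: the same scheme applies verbatim when the limiting field is the non-Gaussian stationary solution of the stochastic Burgers equation \eqref{kpz}, where no Wiener-chaos computation is available --- this is precisely why the authors route the proof through the energy condition of \cite{GJ}.

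Two points should be tightened, though neither is a genuine gap. First, your opening claim that convergence in distribution within a fixed chaos is \emph{equivalent} to $L^2$-convergence is false in general (a second-chaos sequence can converge in law, e.g.\ to a Gaussian, without converging in $L^2$); fortunately you never use this equivalence, since you establish the $L^2$-Cauchy property directly, which suffices. Second, in the dominated-convergence step your uniform bound $C|s-s'|^{-1/2}\|u\|\,\|v\|$ controls the spatially integrated quantity, not a pointwise dominating function; the standard repair is to split the time integration into $\{|s-s'|<\delta\}$, whose contribution is $O(\sqrt\delta)$ uniformly in $\varepsilon,\varepsilon'$ by that very bound, and $\{|s-s'|\ge\delta\}$, where the mollified kernels converge boundedly. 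Finally, hypercontractivity is dispensable: the exponents $\alpha=2$ and $1+\beta=3/2$ already satisfy the Kolmogorov--Centsov criterion, which is exactly how the paper argues.
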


This theorem was first proved in \cite{Ass}. In Section \ref{s4.2} we will present a different proof of this theorem, based on the so-called {\em energy condition} introduced in \cite{GJ}. We call the process $\{\mc A_t; t \in [0,T]\}$ the {\em quadratic field} associated to $\{\mc Y_t; t \in [0,T]\}$.

\subsection{Scaling limits}
\label{s1.4}
The lattice gas model introduced in Section \ref{s1.1} and the mean-zero exclusion process introduced in Section \ref{s1.2} are examples of diffusive systems. Various scaling limits of the density of particles have been established for those systems, and a common feature of any of these results is a diffusive scaling of the time variable. Let us focus on the lattice gas model of Section \ref{s1.1}. The so-called {\em hydrodynamic limit} of this model was established in \cite{FUY}. This hydrodynamic limit can be understood as a law of large numbers for the number of particles on boxes of increasing size. Once a law of large numbers has been established, a question which arises naturally is what happens with the fluctuations of the number of particles on boxes of increasing size. This question was answered in \cite{Cha, Sel} in the following sense. Let $n \in \bb N$ denote a scaling parameter. Let us introduce the {\em density fluctuation field} as the $\mc S'(\bb R)$-valued process $\mc Y_t^n$ given by
\[
 \mc Y_t^n(u) = \frac{1}{\sqrt n} \sum_{x \in \bb Z} \big(\eta_{tn^2}(x) -\rho\big) u(x/n)
\]
for any $u \in \mc S(\bb R)$.
Notice the diffusive time scaling in this definition.
Under the measure  $\nu_\rho$, for any $t>0$ the random variables $\{\eta_{tn^2}(x)-\rho; x \in \bb Z\}$ form an i.i.d.~sequence of mean zero and variance $\rho(1-\rho)$. Following \cite{BD-SGJ-LL}, we call $\chi(\rho)=\rho(1-\rho)$ the {\em mobility} of the system. In particular, for any $t>0$ fixed, the $\mc S'(\bb R)$-valued random variable $\mc Y_t^n$ converges in distribution, as $n \to \infty$ to a white noise of variance $\chi(\rho)$.
Let us introduce the diffusion coefficient
\[
 D(\rho) = \frac{1}{\chi(\rho)} \inf_{ f \text{ local}} \<r, \big(\eta(1)-\eta(0)-\nabla_{0,1} \sum_{x \in \bb Z} \tau_x f \big)^2\>.
\]

\begin{proposition}[\cite{Cha,Sel}]
\label{p1}
 The process $\{\mc Y_t^n;t \in [0,T]\}$ converges in distribution with respect to the $J_1$-Skorohod topology of $\mc D([0,T],\mc S'(\bb R))$ to the stationary solution of the infinite-dimensional Ornstein-Uhlenbeck equation
\begin{equation}
\label{ec2}
 d\mc Y_t = D(\rho) \Delta \mc Y_t dt + \sqrt{2D(\rho) \chi(\rho)} \nabla d \mc M_t.
\end{equation}
\end{proposition}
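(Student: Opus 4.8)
The plan is to prove tightness of the sequence $\{\mc Y^n_t; t \in [0,T]\}$ in $\mc D([0,T],\mc S'(\bb R))$ and to identify every subsequential limit with the stationary solution of \eqref{ec2}, which is unique by \cite{HS}. By Mitoma's criterion, tightness in the space of distribution-valued paths reduces to tightness of the real-valued processes $\{\mc Y^n_t(u); t \in [0,T]\}$ for each fixed $u \in \mc S(\bb R)$, and the limit can be characterized test function by test function. For fixed $u$, Dynkin's formula gives the decomposition
\begin{equation*}
\mc Y^n_t(u) = \mc Y^n_0(u) + \int_0^t n^2 L \mc Y^n_s(u)\, ds + \mc M^n_t(u),
\end{equation*}
where $\mc M^n_t(u)$ is a martingale whose quadratic variation is computed from the carr\'e du champ of $L$.

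Writing $L\eta(x) = j_{x-1,x} - j_{x,x+1}$ as a discrete divergence of the instantaneous current $j_{x,x+1} = r_x\big(\eta(x)-\eta(x+1)\big)$ and summing by parts, the integrand $n^2 L \mc Y^n_s(u)$ becomes $\sqrt n \sum_x j_{x,x+1}(\eta_{sn^2})\,(\nabla_n u)(x/n)$ up to terms that vanish in the limit, where $(\nabla_n u)(x/n) = n\big(u((x+1)/n)-u(x/n)\big)$ is the discrete gradient. Here lies the crux: the system is non-gradient, so $j_{x,x+1}$ is not of the form $\tau_x h - \tau_{x+1} h$ for a local $h$, and a second summation by parts cannot produce a discrete Laplacian directly; moreover $\int j_{x,x+1}\, d\nu_\rho = 0$ for every $\rho$, so the current has no linear part and its naive replacement sends the drift to zero. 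The resolution is Varadhan's fluctuation-dissipation decomposition: there is a diffusion coefficient $D(\rho)$ -- precisely the one given by the variational formula preceding the statement -- and a local function $g$ such that
\begin{equation*}
j_{x,x+1} = -D(\rho)\big(\eta(x+1)-\eta(x)\big) + L\tau_x g + \mc E_x,
\end{equation*}
with $\mc E_x$ negligible in the relevant $H_{-1}$ sense. Inserting this, the gradient term yields $D(\rho)\mc Y^n_s(\Delta u)$ after a further summation by parts, while the exact term $L\tau_x g$ is transferred back to the martingale side by a second application of Dynkin's formula, its boundary contributions being of lower order. The bracket of the resulting total martingale, evaluated under the stationary measure $\nu_\rho$ and passed to the limit through the ergodic theorem, is then given by the Green-Kubo relation and equals $2 D(\rho)\chi(\rho)\int_0^t \|\nabla u\|^2\, ds$.

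With these ingredients in hand, tightness of each $\{\mc Y^n_t(u)\}$ follows from uniform second-moment bounds on the three terms of the decomposition together with Aldous' criterion, the martingale part being controlled by its bracket. To identify the limit, I would pass to the limit in the martingale problem: every limit point $\{\mc Y_t; t \in [0,T]\}$ satisfies, for each $u$, that $\mc Y_t(u) - \mc Y_0(u) - \int_0^t \mc Y_s(D(\rho)\Delta u)\, ds$ is a martingale of quadratic variation $2 D(\rho)\chi(\rho)\int_0^t \|\nabla u\|^2\, ds$, which is exactly the characterization of \eqref{ec2}. Since the invariant law of \eqref{ec2} is a white noise of variance $2D(\rho)\chi(\rho)/2D(\rho) = \chi(\rho)$, and $\mc Y^n_0$ converges to a white noise of variance $\chi(\rho)$, the limit is stationary; uniqueness from \cite{HS} then pins it down and promotes the convergence to the whole sequence.

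The main obstacle is the non-gradient replacement of the second paragraph. Establishing the fluctuation-dissipation decomposition and identifying $D(\rho)$ with the variational constant is the heart of Varadhan's method and is exactly where the two structural hypotheses on the dynamics enter: the sharp spectral gap estimate for the generator restricted to finite boxes and the equivalence of ensembles for $\nu_\rho$, both recalled in Section \ref{s2}. Neither the tightness nor the final characterization presents comparable difficulty; essentially all the analytic weight of Proposition \ref{p1} rests on closing this decomposition, which is carried out in \cite{Cha, Sel}.
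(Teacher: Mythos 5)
Your proposal is correct and follows the same route by which this result is actually established: the paper itself gives no proof of Proposition \ref{p1}, importing it from the cited works of Chang and Sellami, and those references proceed exactly as you outline --- Dynkin decomposition of $\mc Y^n_t(u)$, summation by parts to expose the current, Varadhan's non-gradient fluctuation--dissipation replacement with $D(\rho)$ given by the variational formula, identification of the limiting bracket as $2D(\rho)\chi(\rho)\int_0^t\|\nabla u\|^2ds$, tightness via Mitoma's criterion, and uniqueness of the limiting martingale problem from \cite{HS}. One harmless imprecision: the decomposition $j_{x,x+1} = -D(\rho)\big(\eta(x+1)-\eta(x)\big) + L\tau_x g + \mc E_x$ does not hold with a single exact local $g$; it holds only approximately along a sequence of local functions $g_k$, with the error $\mc E_x$ vanishing in the $H_{-1}$ sense, which is precisely where the spectral gap and equivalence-of-ensembles hypotheses are consumed, as you correctly locate.
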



We are interested in functional limit theorems for certain observables of the process $\{\eta_t; t \geq0\}$ (and/or its rescaled version $\{\eta_{tn^2}; t \geq 0\}$). The occupation time of a site $x \in \bb Z$ is defined as the integral $\int_0^t \eta_s(x) ds$. The following theorem establishes the scaling limit of the occupation time of a fixed site $x \in \bb Z$.

\begin{theorem}
 \label{t3}
The process $\{\Gamma_t^n; t \in [0,T]\}$ defined as
\[
 \Gamma_t^n = \frac{1}{n^{3/2}} \int_0^{tn^2} \big(\eta_{s}(0)-\rho\big) ds
\]
converges in distribution with respect to the uniform topology of $\mc C([0,T],\bb R)$ to a fractional Brownian motion $\{\mc Z_t; t \in [0,T]\}$ of Hurst exponent $H=3/4$. Moreover, the process $\{\mc Z_t; t \in [0,T]\}$ is the same appearing in Theorem \ref{t1}.
\end{theorem}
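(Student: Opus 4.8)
The plan is to transfer the problem from the occupation time to the density fluctuation field and then to invoke Theorem~\ref{t1}. First I would perform the time change $s=rn^2$ to write
\[
\Gamma_t^n=\sqrt n\int_0^t\big(\eta_{rn^2}(0)-\rho\big)\,dr,
\]
and I would introduce, for each $\varepsilon\in(0,1)$, the approximating process
\[
\mc Z_t^{n,\varepsilon}=\int_0^t\mc Y_r^n(i_\varepsilon)\,dr .
\]
The point of this choice is that, by the very definitions of $\mc Y_r^n$ and of $i_\varepsilon$, one has $\mc Y_r^n(i_\varepsilon)=(\sqrt n\,\varepsilon)^{-1}\sum_{x=1}^{\lfloor\varepsilon n\rfloor}(\eta_{rn^2}(x)-\rho)$, that is, $\mc Z_t^{n,\varepsilon}$ is exactly $\sqrt n$ times the integrated empirical density of the fluctuations over a box of size $\ell=\lfloor\varepsilon n\rfloor$ around the origin. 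The proof then splits into three steps: (i) replacing $\Gamma_t^n$ by $\mc Z_t^{n,\varepsilon}$; (ii) passing to the limit $n\to\infty$ with $\varepsilon$ fixed; and (iii) passing to the limit $\varepsilon\to0$.

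Step (i) is where the real work lies and is the main obstacle. The difference
\[
\Gamma_t^n-\mc Z_t^{n,\varepsilon}=\sqrt n\int_0^t\Big\{(\eta_{rn^2}(0)-\rho)-\tfrac1\ell\textstyle\sum_{x=1}^{\ell}(\eta_{rn^2}(x)-\rho)\Big\}\,dr
\]
is precisely the object controlled by the local Boltzmann--Gibbs principle of Section~\ref{s3}: it asserts that replacing the occupation variable at a single site by its average over a growing box, integrated in time, produces a negligible error. Concretely, I would use that estimate to obtain
\[
\lim_{\varepsilon\to0}\ \limsup_{n\to\infty}\ \bb E_\rho\Big[\sup_{0\le t\le T}\big|\Gamma_t^n-\mc Z_t^{n,\varepsilon}\big|^2\Big]=0 .
\]
This is the hard part because it rests on the full strength of the renormalization scheme, the sharp spectral gap bound, the second-order equivalence of ensembles and Kipnis--Varadhan's inequality recalled in Section~\ref{s2}; once it is in place, everything downstream is soft. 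By Chebyshev's inequality the display above yields $\lim_{\varepsilon\to0}\limsup_{n\to\infty}\bb P_\rho(\sup_{t\le T}|\Gamma_t^n-\mc Z_t^{n,\varepsilon}|>\delta)=0$ for every $\delta>0$.

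For step (ii), fix $\varepsilon$ and note that $i_\varepsilon\in L^2(\bb R)$, so $\mc Y_r(i_\varepsilon)$ is well defined. Proposition~\ref{p1} gives $\mc Y^n\Rightarrow\mc Y$ in $\mc D([0,T],\mc S'(\bb R))$, and I would deduce $\mc Z^{n,\varepsilon}\Rightarrow\mc Z^\varepsilon$ in $\mc C([0,T],\bb R)$ by a continuous-mapping argument applied to the functional $\mc Y\mapsto\big(t\mapsto\int_0^t\mc Y_r(i_\varepsilon)\,dr\big)$. The only nuisance is that $i_\varepsilon$ is not a Schwartz function, so this functional is not literally continuous on $\mc S'(\bb R)$; I would remove the difficulty by approximating $i_\varepsilon$ in $L^2(\bb R)$ by $g_\delta\in\mc S(\bb R)$, using the continuity of $\mc Y\mapsto\int_0^\cdot\mc Y_r(g_\delta)\,dr$ together with the uniform second-moment bounds $\bb E[\sup_t\mc Y_t(g_\delta-i_\varepsilon)^2]$ and their discrete counterparts for $\mc Y^n$ (finite because $\nu_\rho$ is a product measure with bounded marginals), and then letting $\delta\to0$. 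Step (iii) is immediate: Theorem~\ref{t1} says exactly that $\mc Z^\varepsilon\Rightarrow\mc Z$, the fractional Brownian motion of Hurst index $3/4$, as $\varepsilon\to0$.

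Finally I would combine the three steps through the standard approximation criterion for weak convergence (Billingsley, \emph{Convergence of Probability Measures}, Theorem~3.2): since $\mc Z^{n,\varepsilon}\Rightarrow\mc Z^\varepsilon$ as $n\to\infty$, $\mc Z^\varepsilon\Rightarrow\mc Z$ as $\varepsilon\to0$, and $\Gamma^n$ is uniformly close to $\mc Z^{n,\varepsilon}$ in the sense of step (i), it follows that $\Gamma^n\Rightarrow\mc Z$ in the uniform topology of $\mc C([0,T],\bb R)$. Because the limit is produced as the iterated limit $\lim_{\varepsilon\to0}\lim_{n\to\infty}\mc Z^{n,\varepsilon}$, the limiting process is literally the $\mc Z$ constructed in Theorem~\ref{t1}, which gives the ``moreover'' assertion for free.
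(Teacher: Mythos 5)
Your overall route is the same as the paper's: compare $\Gamma_t^n$ with $\mc Z_t^{n,\varepsilon}=\int_0^t \mc Y_s^n(i_\varepsilon)\,ds$ via the local Boltzmann--Gibbs principle, pass to the Ornstein--Uhlenbeck limit of Proposition \ref{p1}, and invoke Theorem \ref{t1}. But there is a genuine gap in your step (i), and it is exactly the step on which your whole combination scheme rests. Theorem \ref{t7} (and the derived energy estimate \eqref{localenergy}) is a \emph{fixed-time} $L^2$ bound: $\bb E_\rho\big[\big(\Gamma_t^n-\mc Z_t^{n,\varepsilon}\big)^2\big]\leq c\{\varepsilon t+t^2/(\varepsilon^2 n)\}$ for each $t$. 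You assert the strictly stronger maximal estimate $\bb E_\rho[\sup_{t\leq T}|\Gamma_t^n-\mc Z_t^{n,\varepsilon}|^2]\to 0$, which does not follow from the quoted results: Proposition \ref{p2} as stated in the paper, and hence Proposition \ref{p8} and Lemmas \ref{one-block}--\ref{two-blocks}, all control second moments at a fixed time only, and a Kolmogorov-type chaining upgrade fails here because the increment bound is of order $\varepsilon|t-s|$ (exponent $1$, not $1+\beta$). Without the sup-in-$t$ estimate, Billingsley's approximation theorem is not applicable in $\mc C([0,T],\bb R)$ with the uniform metric, and your proposal contains no other tightness argument for $\{\Gamma_t^n\}_n$ --- so the proof as written does not close. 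The gap is reparable: the Kipnis--Varadhan inequality does admit a maximal version (see \cite{KV,CLO,SVY}, with a larger constant), and since every step of the renormalization scheme is an $L^2$ bound on an additive functional (with Lemma \ref{lema4} handled by the pointwise bound $\sup_{t\le T}|\int_0^t g\,ds|\le\int_0^T|g|\,ds$), the sup version of Theorem \ref{t7} does hold; but you would have to say this, and rerun the lemmas with the maximal inequality.

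For comparison, the paper avoids any maximal inequality. It first proves tightness of $\Gamma^n(f)$ directly, via the moment bound $\bb E_\rho[(\Gamma_t^n(f))^2]\le c\,t^{3/2}$, obtained by combining Corollary \ref{cor1} with the static variance bound of Proposition \ref{p6} and optimizing $\ell=n\sqrt t$ (with the crude stationarity bound $c t^2 n$ covering short times $t\le \ell_0^2/n^2$), and then applying Kolmogorov--Centsov together with stationarity of increments. Identification of the limit then needs only the fixed-time estimate: along a convergent subsequence, $L^2$ bounds are preserved under weak convergence, giving $\bb E[(\Gamma_t(f)-\varphi_f'(\rho)\mc Z_t^\varepsilon)^2]\le c t\varepsilon$ and hence equality of finite-dimensional distributions with $\varphi_f'(\rho)\mc Z_t$ after $\varepsilon\to 0$. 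Note also that your step (ii) (full process-level convergence $\mc Z^{n,\varepsilon}\Rightarrow\mc Z^\varepsilon$, with the $L^2$-approximation of $i_\varepsilon$ by Schwartz functions) is more than the paper needs --- it only uses fixed-time convergence --- though that part of your argument is sound. If you either import the maximal Kipnis--Varadhan inequality to justify your step (i), or replace your combination step by the tightness-plus-finite-dimensional-distributions argument just described, your proof becomes complete, and the ``moreover'' identification with the $\mc Z$ of Theorem \ref{t1} goes through as you say.
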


Notice that by translation invariance the same result holds true for the occupation time of any site $x \in \bb Z$.
The occupation time is a particular case of a more general family of observables of the process $\{\eta_t; t \in [0,T]\}$.
Let $f: \Omega \to \bb R$ be a local function and define, for $\beta \in [0,1]$, $\varphi_f(\beta) = \int f d\nu_\beta$. The (centered) additive functional associated to $f$ is defined as the integral
\[
 \int_0^t \big(f(\eta_s)-\varphi_f(\rho)\big) ds.
\]

This process satisfies a functional central limit theorem similar to the one satisfied by the occupation time.

\begin{theorem}
 \label{t4}
The process $\{\Gamma_t^n(f); t \in [0,T]\}$ defined as
\[
 \Gamma_t^n(f) = \frac{1}{n^{3/2}} \int_0^{tn^2} \big(f(\eta_{s})-\varphi_f(\rho)\big) ds
\]
converges in distribution with respect to the uniform topology of $\mc C([0,T],\bb R)$ to the process $\{\varphi'_f(\rho) \mc Z_t; t \in [0,T]\}$, where the process $\{\mc Z_t; t \in [0,T]\}$ is the same appearing in Theorem \ref{t1}.
\end{theorem}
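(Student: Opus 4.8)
The plan is to deduce Theorem~\ref{t4} from the already-established Theorem~\ref{t3} together with the local Boltzmann-Gibbs principle of Section~\ref{s3}. Since $f$ is local and each $\nu_\beta$ is a product measure, $\varphi_f(\beta)=\int f\,d\nu_\beta$ is a polynomial in $\beta$, so $\varphi_f'(\rho)$ is well defined. Set $g=f-\varphi_f(\rho)$, a mean-zero local function under $\nu_\rho$ whose associated profile $\varphi_g=\varphi_f-\varphi_f(\rho)$ satisfies $\varphi_g'(\rho)=\varphi_f'(\rho)$. Denoting by $\Gamma_t^n$ the occupation-time functional of Theorem~\ref{t3}, the identity
\[
\Gamma_t^n(f)-\varphi_f'(\rho)\,\Gamma_t^n
=\frac{1}{n^{3/2}}\int_0^{tn^2}\Big[g(\eta_s)-\varphi_f'(\rho)\big(\eta_s(0)-\rho\big)\Big]\,ds
\]
reduces everything to showing that the right-hand side tends to zero uniformly on $[0,T]$ in probability. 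Once this is done, Theorem~\ref{t3} and the continuous mapping theorem give $\varphi_f'(\rho)\Gamma_t^n\Rightarrow\varphi_f'(\rho)\mc Z_t$, and the vanishing of the difference transfers this limit to $\Gamma_t^n(f)$.

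The engine of the reduction is the local Boltzmann-Gibbs principle. Applied to the local function $g$, it asserts that for each fixed $\varepsilon$ the functional $\frac{1}{n^{3/2}}\int_0^{tn^2}g(\eta_s)\,ds$ differs from $\varphi_f'(\rho)$ times the time-integrated empirical density on a box of size $\varepsilon n$,
\[
\frac{1}{n^{3/2}}\int_0^{tn^2}\overline\eta_s^{\,\varepsilon n}\,ds,
\qquad
\overline\eta_s^{\,\ell}=\frac{1}{\ell}\sum_{0<x\le\ell}\big(\eta_s(x)-\rho\big),
\]
by an error that vanishes in the iterated limit $\lim_{\varepsilon\to0}\limsup_{n\to\infty}$. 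Applying the same principle to the local function $\eta(0)-\rho$, whose profile $\beta\mapsto\beta-\rho$ has derivative $1$, shows that $\frac{1}{n^{3/2}}\int_0^{tn^2}(\eta_s(0)-\rho)\,ds$ is approximated by the very same box-average integral with coefficient $1$. Multiplying this second approximation by $\varphi_f'(\rho)$ and subtracting, the box-average contributions cancel identically and the displayed difference is bounded by the sum of the two Boltzmann-Gibbs errors. A change of variables shows moreover that $\frac{1}{n^{3/2}}\int_0^{tn^2}\overline\eta_s^{\,\varepsilon n}\,ds=\int_0^t\mc Y_s^n(i_\varepsilon)\,ds$, the discrete analogue of $\mc Z_t^\varepsilon$, which makes the consistency with Theorem~\ref{t1} transparent.

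The step I expect to be the main obstacle is upgrading the control of the error from a fixed-time estimate to one uniform in $t\in[0,T]$, as demanded by convergence in $\mc C([0,T],\bb R)$. The local Boltzmann-Gibbs principle delivers its bounds through the Kipnis-Varadhan inequality of Section~\ref{s2}, which estimates $\bb E_\rho\big[\sup_{0\le t\le T}\big(\tfrac{1}{n^{3/2}}\int_0^{tn^2}V(\eta_s)\,ds\big)^2\big]$ by an $H_{-1}$-type norm of $V$; this is exactly the uniform-in-time quantity needed, once the error terms $V=g-\varphi_f'(\rho)\overline\eta^{\,\varepsilon n}$ and $V=(\eta(0)-\rho)-\overline\eta^{\,\varepsilon n}$ are shown to have small $H_{-1}$ norm after the diffusive scaling. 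Tightness of $\{\Gamma_t^n(f)\}$ in the uniform topology then follows from the tightness of $\{\Gamma_t^n\}$ supplied by Theorem~\ref{t3} together with the uniform smallness of the difference, which finally identifies the limit as $\varphi_f'(\rho)\mc Z_t$.
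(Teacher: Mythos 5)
Your reduction is mathematically sound and genuinely different in organization from the paper's argument, but it rests on two ingredients the paper does not supply, and one of them makes the proof circular as written. The paper does not prove Theorem \ref{t3} independently: it states explicitly that Theorem \ref{t3} is a particular case of Theorem \ref{t4} and therefore only proves the latter. The paper's actual proof of Theorem \ref{t4} runs directly: it establishes the moment bound $\bb E_\rho\big[\big(\Gamma_t^n(f)\big)^2\big] \leq c\,t^{3/2}$ (via Corollary \ref{cor1} and Proposition \ref{p6} with the optimized block size $\ell = n\sqrt{t}$, patched at short times by the crude Cauchy--Schwarz bound \eqref{badbound}), deduces tightness from Kolmogorov--Centsov and stationarity of increments, and then identifies any limit point by passing the energy estimate \eqref{localenergy} to the limit (since $L^2$-upper bounds are preserved under weak convergence), comparing with $\varphi_f'(\rho)\mc Z_t^\varepsilon$ and invoking Theorem \ref{t1}. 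So by assuming Theorem \ref{t3} you are assuming the special case whose only proof in the paper is the very argument you are bypassing; to make your route self-contained you would have to prove Theorem \ref{t3} first, by exactly this direct argument applied to $f(\eta)=\eta(0)$ --- at which point your reduction adds only the bootstrap from the occupation time to general $f$. That bootstrap itself is fine: your observation that the box-average terms cancel when you subtract the two local Boltzmann--Gibbs approximations is correct, and is in substance the same as combining the paper's estimate \eqref{localenergy} for $f$ and for $\eta(0)-\rho$ (compare \eqref{localenergy2}); optimizing over $\varepsilon$ shows $\bb E_\rho\big[\big(\Gamma_t^n(f)-\varphi_f'(\rho)\Gamma_t^n\big)^2\big]\to 0$ for each fixed $t$.

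Second, your uniformity step invokes a maximal form of the Kipnis--Varadhan inequality, bounding $\bb E_\rho\big[\sup_{0\le t\le T}(\cdot)^2\big]$, but Proposition \ref{p2} as stated in the paper is a fixed-time bound only. The maximal version does exist in the literature (in \cite{KV} for reversible dynamics, which covers the lattice gas here, and in the spirit of \cite{CLO}, \cite{SVY} for the non-reversible extensions), and with it your plan goes through: the one-block/renormalization/two-blocks part of the Boltzmann--Gibbs error upgrades to a supremum bound, the equivalence-of-ensembles part upgrades trivially since $\sup_{t\le T}\big(\int_0^t h(\eta_s)ds\big)^2 \leq T\int_0^T h(\eta_s)^2 ds$, and choosing $\varepsilon=\varepsilon_n\to 0$ with $\varepsilon_n^2 n \to\infty$ gives $\sup_{t\le T}\big|\Gamma_t^n(f)-\varphi_f'(\rho)\Gamma_t^n\big|\to 0$ in $L^2(\bb P_\rho)$, whence the conclusion by a Slutsky-type argument. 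What this buys is that you never need Kolmogorov--Centsov for $\Gamma^n(f)$ itself; what it costs is an input strictly stronger than the paper's stated toolkit, plus the unproved special case. Flag both dependencies explicitly if you keep this structure.
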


Notice that $\Gamma_t^{n}(f)$ looks only at the behavior of the process $\{\eta_t; t \geq 0\}$ on a neighborhood of the origin. An extensive field can be associated to the function $f$ in the following way. Let $\{ \Lambda_t^{n,f}; t \in [0,T]\}$ be the $\mc S'(\bb R)$-valued process defined as
\[
 \Lambda_t^{n,f}(u) = \int_0^t \sum_{x \in \bb Z} \big(\tau_x f(\eta_{sn^2})-\varphi_f(\rho)\big) u(x/n) ds
\]
for any $u \in \mc S(\bb R)$.
\begin{theorem}
 \label{t5}
Let us assume that $\varphi_f'(\rho)=0$. Then the  process $\{\Lambda_t^{n,f}; t \in [0,T]\}$ converges in distribution with respect to the uniform topology of $\mc C([0,T],\mc S'(\bb R))$ to the process $\{\frac{1}{2}\varphi_f''(\rho) \mc A_t; t \in [0,T]\}$, where $\{ \mc A_t; t \in [0,T]\}$ is the quadratic field associated to the stationary solution of \eqref{ec2}.
\end{theorem}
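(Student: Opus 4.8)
The plan is to reduce Theorem \ref{t5} to the local (second-order) Boltzmann--Gibbs principle of Section \ref{s3}, combined with the density-field convergence of Proposition \ref{p1} and the construction of the quadratic field in Theorem \ref{t2}. The guiding heuristic is a Taylor expansion of the centered local function around the background density. Since $\varphi_f(\beta)=\int f\,d\nu_\beta$, the equivalence of ensembles identifies the projections of $\tau_x f-\varphi_f(\rho)$ onto the degree-one and degree-two fluctuations of $\nu_\rho$ with $\varphi_f'(\rho)$ and $\varphi_f''(\rho)$ respectively. The hypothesis $\varphi_f'(\rho)=0$ annihilates the linear contribution, so the leading fluctuation of the extensive field is \emph{quadratic} with prefactor $\tfrac12\varphi_f''(\rho)$; all contributions of degree three and higher should be negligible at this scale.

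\emph{Replacement.} First I would fix a mesoscopic box size $\ell=\varepsilon n$ and apply the local Boltzmann--Gibbs principle to replace, inside the time integral, each $\tau_x f(\eta_{sn^2})-\varphi_f(\rho)$ by $\tfrac12\varphi_f''(\rho)\{(\overline\eta^{\,\ell}_{sn^2}(x)-\rho)^2-\chi(\rho)/\ell\}$, where $\overline\eta^{\,\ell}_s(x)=\ell^{-1}\sum_{0<y-x\le\ell}\eta_s(y)$ is the empirical density on a box of size $\ell$ and $\chi(\rho)/\ell$ is its variance under $\nu_\rho$. Introducing
\[
\mc A_t^{n,\varepsilon}(u)=\int_0^t\sum_{x\in\bb Z}\Big\{\big(\overline\eta^{\,\varepsilon n}_{sn^2}(x)-\rho\big)^2-\frac{\chi(\rho)}{\varepsilon n}\Big\}u(x/n)\,ds,
\]
the content of the principle is that
\[
\limsup_{n\to\infty}\bb E_\rho\Big[\sup_{0\le t\le T}\big|\Lambda_t^{n,f}(u)-\tfrac12\varphi_f''(\rho)\,\mc A_t^{n,\varepsilon}(u)\big|^2\Big]\le g(\varepsilon),\qquad g(\varepsilon)\to0,
\]
the linear term being absent precisely because $\varphi_f'(\rho)=0$.

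\emph{Rewriting and double limit.} A direct computation gives $\overline\eta^{\,\varepsilon n}_{sn^2}(x)-\rho=n^{-1/2}\mc Y_s^n(i_\varepsilon(x/n))$, whence
\[
\mc A_t^{n,\varepsilon}(u)=\int_0^t\frac1n\sum_{x\in\bb Z}\Big\{\mc Y_s^n(i_\varepsilon(x/n))^2-\frac{\chi(\rho)}{\varepsilon}\Big\}u(x/n)\,ds.
\]
Since $\sigma^2/2D(\rho)=\chi(\rho)$ for equation \eqref{ec2}, the Riemann sum $\tfrac1n\sum_x\to\int_{\bb R}dx$ together with the convergence $\mc Y^n\Rightarrow\mc Y$ from Proposition \ref{p1} should yield, for each fixed $\varepsilon$, that $\mc A^{n,\varepsilon}\Rightarrow\mc A^{\varepsilon}$ as $n\to\infty$, with $\mc A^\varepsilon$ as in \eqref{defdea}. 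Theorem \ref{t2} then gives $\mc A^\varepsilon\Rightarrow\mc A$ as $\varepsilon\to0$. A standard convergence-together argument, using the uniform error bound $g(\varepsilon)\to0$ from the replacement step to justify interchanging the limits $n\to\infty$ and $\varepsilon\to0$, identifies the limit of $\Lambda^{n,f}$ as $\tfrac12\varphi_f''(\rho)\mc A$. Tightness of $\{\Lambda^{n,f}\}$ in $\mc C([0,T],\mc S'(\bb R))$, which follows from the same $L^2$ estimates, upgrades this to convergence in distribution.

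\emph{Main obstacle.} Granting the Boltzmann--Gibbs estimate of Section \ref{s3}, the principal difficulty within this proof is the passage to the limit in the quadratic functional. Because squaring is discontinuous for the topology of $\mc S'(\bb R)$, I would have to argue that for each fixed $\varepsilon$ the map $\mc Y\mapsto\mc A^\varepsilon$ is continuous on $\mc C([0,T],\mc S'(\bb R))$ --- this is reasonable since $i_\varepsilon\in L^2(\bb R)$ and the limiting trajectories are continuous, so the continuous mapping theorem applies --- while simultaneously controlling, uniformly in $s$, the discretization error $\tfrac1n\sum_x-\int dx$ for the square of a field that is only $L^2$-regular. Keeping the two scaling parameters honest (letting $n\to\infty$ before $\varepsilon\to0$) and matching the variance counterterms $\chi(\rho)/\varepsilon=\sigma^2/2D\varepsilon$ exactly is the bookkeeping that makes the argument close.
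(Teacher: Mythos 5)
Your proposal is correct in structure and follows essentially the same route as the paper: replace $\Lambda_t^{n,f}(u)$ by $\tfrac12\varphi_f''(\rho)\,\mc A_t^{n,\varepsilon}(u)$ via the second-order Boltzmann--Gibbs principle (Theorem \ref{t6}, applied at time $tn^2$ with $\ell=\varepsilon n$ and $h_x=u(x/n)$ — your parenthetical ``(second-order)'' correctly identifies that the relevant input is Theorem \ref{t6}, not the local principle of Theorem \ref{t7}); then let $n\to\infty$ at fixed $\varepsilon$, using Proposition \ref{p1} to get $\mc A^{n,\varepsilon}\to\mc A^\varepsilon$ and the preservation of $L^2$ bounds under weak convergence; then let $\varepsilon\to0$ via Theorem \ref{t2}. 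Your identity $\eta^{\varepsilon n}_{sn^2}(x)-\rho=n^{-1/2}\mc Y^n_s(i_\varepsilon(x/n))$ and the matching of counterterms $\chi(\rho)/\varepsilon=\sigma^2/2D\varepsilon$ for equation \eqref{ec2} are exactly the bookkeeping the paper performs.

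One concrete caveat. The replacement estimate you attribute to the principle, with $\sup_{0\le t\le T}$ inside the expectation, is strictly stronger than what is available: \eqref{sobg} is a fixed-$t$ bound, and no maximal version is proved in the paper. Your convergence-together argument, as phrased, leans on that sup bound to interchange the limits $n\to\infty$ and $\varepsilon\to0$ in the uniform topology. The paper circumvents this: it first proves tightness of $\{\Lambda_t^{n,f}(u)\}_n$ by combining the energy estimate \eqref{energy} of Proposition \ref{p9} with the static bound $\Var(\psi_f(\ell);\nu_\rho)\le c/\ell^2$ from Proposition \ref{p6}, optimizing over the free parameter $\ell=n\sqrt t$ to obtain $\bb E_\rho\big[\Lambda_t^{n,f}(u)^2\big]\le c\,t^{3/2}$ (with a crude Cauchy--Schwarz bound covering $t\le\ell_0^2/n^2$), and then, with tightness and stationary increments in hand via Kolmogorov--Centsov and Mitoma's criterion, identifies the finite-dimensional distributions of any limit point using only the fixed-$t$ $L^2$ estimates, taking $n\to\infty$ before $\varepsilon\to0$. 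Since you prove tightness anyway, your argument closes the same way once the sup is downgraded to fixed $t$; so this is a repair within your own scheme rather than a different proof. Note also that the continuity of $\mc Y\mapsto\mc A^\varepsilon$ you invoke is delicate because $i_\varepsilon\notin\mc S(\bb R)$; the paper sidesteps this by asserting only finite-dimensional convergence of $\mc A^{n,\varepsilon}$ to $\mc A^\varepsilon$, which suffices for transporting the $L^2$ bounds.
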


\section{Some auxiliary results}
\label{s2}

In this section we review some well known results in the theory of Markov chains, which will be the building blocks of the proofs of Theorems \ref{t3}, \ref{t4} and \ref{t5}. Since we did not find exact references, we give proofs of some of these results.

\subsection{Equivalence of ensembles}

Let $f:\Omega \to \bb R$ be a local function. For a finite set $A \subseteq \bb Z$, we say that $\supp(f) \subseteq A$ if $f(\eta(x))=f(\xi(x))$ whenever $\eta(x) =\xi(x)$ for any $x \in A$.
Assume for simplicity that there exists $\ell_0 \in \bb N$ such that $\supp(f) \subseteq \{1,...,\ell_0\}$. For $\ell \geq \ell_0$ we define the function $\psi_f(\ell):\Omega \to \bb R$ as
\[
 \psi_f(\ell;\eta) = E\Big[f\Big|\sum_{x=1}^\ell \eta(x)\Big],
\]
where the conditional expectation is taken with respect to $\nu_\rho$. When $f:\Omega \to \bb R$ is local, there always exist numbers $x \in \bb Z$ and $\ell_0 \in \bb N$ such that $\supp(\tau_x f) \subseteq \{1,...,\ell_0\}$. In that case we define, for $\ell \geq \ell_0$, $\psi_f(\ell) = \tau_{-x} \psi_{\tau_xf}(\ell)$. Notice that the definition of $\psi_f(\ell)$ depends on the choice of $x$ (it does not depend on the choice of $\ell_0$ as soon as $\ell \geq \ell_0$), but in that case the corresponding functions only differ by a translation. This point does not really matter, as soon as the values of $x$ and $\ell_0$ are kept fixed for the purposes of this definition. In order to make the notation simpler, we will always assume that $f$ is such that $\supp(f) \subseteq \{1,...,\ell_0\}$ for some $\ell_0 \in \bb N$.

Recall the definition $\varphi_f(\beta) = \int f d\nu_\beta$. For $\ell \in \bb N$ and $\eta \in \Omega$, let us define $\eta^\ell = \ell^{-1} \sum_{x=1}^\ell \eta(x)$. In other words, $\eta^\ell$ is the average density of particles on the interval $\{1,...,\ell\}$ with respect to the configuration $\eta$. The following proposition gives an approximation of $\psi_f(\ell)$ in terms of $\varphi_f$ and $\eta^\ell$.

\begin{proposition}[Equivalence of ensembles]
 \label{p5}
  Let $f:\Omega \to \bb R$ be a local function. Suppose that there exists $\ell_0 \in \bb N$ such that $\supp(f) \subseteq \{1,...,\ell_0\}$. There exists a constant $c=c(f)$ such that
\[
 \sup_{\eta \in \Omega} \Big| \psi_f(\ell; \eta) - \varphi_f(\eta^\ell) -\frac{\chi(\eta^\ell)}{2\ell} \varphi_f''(\eta^\ell)\Big| \leq \frac{c}{\ell^2}
\]
for any $\ell \geq \ell_0$.
\end{proposition}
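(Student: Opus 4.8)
The plan is to reduce the proposition to an exact combinatorial identity for monomials and then Taylor–expand that identity in the scale $1/\ell$, paying attention to uniformity in $\eta$. Since $f$ depends only on the coordinates $\eta(1),\dots,\eta(\ell_0)$, each taking values in $\{0,1\}$, I can write it uniquely as a multilinear polynomial $f=\sum_{A\subseteq\{1,\dots,\ell_0\}}c_A\prod_{x\in A}\eta(x)$, the sum being finite. Each of $\psi_f(\ell;\cdot)$, $\varphi_f$ and $\varphi_f''$ is linear in $f$ (the first because conditional expectation is linear, the latter two because $f\mapsto\int f\,d\nu_\beta$ is linear and differentiation in $\beta$ preserves linearity), so it suffices to prove the bound for a single monomial $f_A(\eta)=\prod_{x\in A}\eta(x)$ with $|A|=j\le\ell_0$, and then combine the bounds with weights $|c_A|$ to obtain a constant $c=c(f)$.

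Fix such an $A$ and write $k=\sum_{x=1}^{\ell}\eta(x)$, so that $\eta^\ell=k/\ell$. Conditioned on the value $k$, the law of $\eta$ is uniform over configurations having $k$ particles in $\{1,\dots,\ell\}$, and in particular does not depend on $\rho$. A direct count then gives the exact formula
\[
\psi_{f_A}(\ell;\eta)=\frac{\binom{\ell-j}{k-j}}{\binom{\ell}{k}}=\prod_{i=0}^{j-1}\frac{k-i}{\ell-i},
\]
with the convention that it vanishes when $k<j$ (one cannot occupy $j$ prescribed sites with fewer than $j$ particles). Since $\varphi_{f_A}(\beta)=\beta^{j}$ and $\varphi_{f_A}''(\beta)=j(j-1)\beta^{j-2}$, the proposition reduces to a purely arithmetic estimate on this falling–factorial ratio.

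To expand uniformly I set $u=\eta^\ell=k/\ell$ and $\varepsilon=1/\ell$ and rewrite each factor as $\frac{k-i}{\ell-i}=\frac{u-i\varepsilon}{1-i\varepsilon}=u+(u-1)\frac{i\varepsilon}{1-i\varepsilon}$, so that for $\ell\ge 2j$ the denominators exceed $1/2$ and $\frac{k-i}{\ell-i}=u+(u-1)i\varepsilon+r_i$ with $|r_i|\le C\varepsilon^{2}$ uniformly in $u\in[0,1]$. Multiplying the $j$ factors and collecting powers of $\varepsilon$, the zeroth–order term is $u^{j}=\varphi_{f_A}(u)$ and the coefficient of $\varepsilon$ is (up to sign) $\binom{j}{2}u^{j-1}(1-u)$. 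The elementary identity
\[
\binom{j}{2}u^{j-1}(1-u)=\tfrac12\,j(j-1)u^{j-2}\cdot u(1-u)=\tfrac12\,\chi(u)\,\varphi_{f_A}''(u)
\]
then shows that this first–order term is the correction $\frac{\chi(\eta^\ell)}{2\ell}\varphi_{f_A}''(\eta^\ell)$ of the statement, while the remaining terms are $O(\varepsilon^{2})=O(\ell^{-2})$ because the product has boundedly many factors, each bounded in absolute value.

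The one genuinely delicate point — and the step I expect to be the main obstacle — is the uniformity over all $\eta$, i.e.\ over all $k\in\{0,\dots,\ell\}$. Organizing the expansion in the variable $u=\eta^\ell$ rather than in $1/k$ is precisely what avoids the blow-up that a naive expansion would suffer when the empirical density is small. It then remains only to dispose of the extreme regime $k<j$, where $\psi_{f_A}\equiv 0$ while $u=k/\ell=O(1/\ell)$ forces both $\varphi_{f_A}(u)=u^{j}=O(\ell^{-j})$ and $\frac{\chi(u)}{2\ell}\varphi_{f_A}''(u)=O(\ell^{-j})$; for $j\ge 2$ each term is individually $O(\ell^{-2})$, so the bound holds there trivially. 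Finally, the finitely many values $\ell_0\le\ell<2j$ contribute a bounded left–hand side that is absorbed into $c$. Assembling the two regimes and summing over the monomials $f_A$ yields the uniform estimate $c/\ell^{2}$.
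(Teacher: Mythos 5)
Your proposal is correct and follows essentially the same route as the paper's proof: reduction to monomials by multilinearity, the exact falling-factorial formula $\psi_{f_A}(\ell)=\prod_{i=0}^{j-1}(k-i)/(\ell-i)$, and an expansion in $1/\ell$ organized in the variable $u=\eta^\ell\in[0,1]$ to obtain uniformity (your factor-by-factor expansion is just an algebraic rearrangement of the paper's splitting $a_{\ell,k}\prod_{j}(\eta^\ell-j/\ell)$, and your separate treatment of the regime $k<j$ is not even needed, since the product formula vanishes there automatically). One caveat you should not conceal behind ``up to sign'': the first-order term is genuinely $-\frac{\chi(\eta^\ell)}{2\ell}\varphi_{f_A}''(\eta^\ell)$, i.e.\ the expansion reads $\psi_f(\ell)=\varphi_f(\eta^\ell)-\frac{\chi(\eta^\ell)}{2\ell}\varphi_f''(\eta^\ell)+O(\ell^{-2})$, so the plus sign in the printed statement is a typo --- as confirmed both by the final display of the paper's own proof and by Proposition \ref{p6}, whose counterterm $-\chi(\rho)/\ell$ inside the bracket is consistent only with the minus-sign version --- and your writeup should state the minus sign explicitly rather than match the statement as printed.
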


\begin{proof}
Notice that, due to the structure of the configuration space $\Omega$, any local function $f$ is a finite linear combination of functions of the form $\prod_{x \in A} \eta(x)$, where $A \subseteq \bb Z$ is finite. It is enough to prove the theorem for functions of this form, since $\varphi_f$ and $\psi_f(\ell)$ are linear functions of $f$. The measure $\nu_\rho$ and the function $\eta^\ell$ are exchangeable, in the sense that they are left invariant by a permutation of the sites $\{1,...,\ell\}$. Therefore, it is enough to prove the theorem for the function $f= \prod_{x=1}^k \eta(x)$ for each $k \in \bb N$. Fix $k \in \bb N$ and take $\ell \geq k$. We can explicitly compute $\psi_f(\ell)$:
\[
 \psi_f(\ell;\eta) = \prod_{i=1}^{k-1}\frac{\ell}{\ell-i} \prod_{j=0}^{k-1} \Big(\eta^\ell -\frac{j}{\ell}\Big).
\]
Let us call $a_{\ell,k} = \prod_{i=0}^{k-1} \ell/(\ell-i)$. Notice that $a_{\ell,k}$ is uniformly bounded in $\ell$, and also notice that $a_{\ell,k} \to 1$ as $\ell \to \infty$. Developing the product $\prod_{j=0}^{k-1}(\eta^\ell-j/\ell)$ we obtain the expansion
\begin{equation}
\label{ec3}
 \psi_f(\ell;\eta) = a_{\ell,k} \sum_{j=0}^k \frac{p_j (\eta^\ell)^{\ell-j}}{\ell^j},
\end{equation}
where the coefficients $p_j$ do not depend on $\ell$. Notice that $0 \leq \eta^\ell \leq 1$ for any $\eta \in \Omega$. Therefore, all the powers of order smaller than $\ell-1$ in \eqref{ec3} are uniformly bounded by $c/\ell^2$, where the constant $c$\footnote{
As usual, $c$ will denote a constant which may change from line to line, but depends only on the specified parameters.}
depends only on the coefficients $p_j$ (and therefore it does not depend on $\ell$). We conclude that there exists a constant $c$ such that
\begin{equation}
\label{ec4}
 \sup_{\eta \in \Omega} \Big| \psi_f(\ell;\eta) -a_{\ell,k}\Big(p_0 (\eta^\ell)^k +\frac{p_1 (\eta^\ell)^{k-1}}{\ell}\Big)\Big| \leq \frac{c}{\ell^2}.
\end{equation}
The constants $p_0, p_1$ are easy to compute. Since each factor in $\prod_{j=0}^{k-1}(\eta^\ell-j/\ell)$ is monic, $p_0=1$ and $p_1/\ell$ is equal to minus the sum of the roots of each of such monomials. Therefore, $p_1 = -k(k-1)/2$. Now we need to expand $a_{\ell,k}$ in powers of $\ell$ up to order $2$. It is easier to expand $a_{\ell,k}^{-1}$. In fact,
\[
 a_{\ell,k}^{-1} = \sum_{i=0}^{k-1} \frac{p_i}{\ell^i} = 1 -\frac{k(k-1)}{2\ell} +\frac{r(\ell)}{\ell^2},
\]
where $r(\ell)$ is bounded in $\ell$. Using the expansion $(1-\delta+O(\delta^2))^{-1} = 1+ \delta +O(\delta^2)$ we conclude that
\[
 a_{\ell,k} = 1 +\frac{k(k-1)}{2\ell} +\frac{\tilde r(\ell)}{\ell^2}
\]
for another bounded function $\tilde r(\ell)$. Putting this expansion back into \eqref{ec4} we conclude that there exists a constant $c$ which depends only on $k$ such that
\[
  \sup_{\eta \in \Omega} \Big| \psi_f(\ell;\eta) - \Big(1+ \frac{k(k-1)}{2\ell}\Big)(\eta^\ell)^k +\frac{k(k-1)}{2\ell} (\eta^\ell)^{k-1}\Big| \leq \frac{c}{\ell^2}.
\]
For this particular choice of the function $f$ we have $\varphi_f(\eta^\ell) = (\eta^\ell)^k$ and $\varphi_f''(\eta^\ell) = k(k-1) (\eta^\ell)^{k-2}$. Replacing above $(\eta^\ell)^k$ by $\varphi_f(\eta^\ell)$ and $k(k-1) (\eta^\ell)^{k-1}$ by $\eta^\ell \varphi_f''(\eta^\ell)$, the proposition is proved.	
\end{proof}

Proposition \ref{p5} has the following consequence:

\begin{proposition}
\label{p6}
Let $f:\Omega \to \bb R$ be a local function. Suppose that there exists $\ell_0$ such that $\supp(f) \subseteq \{1,...,\ell_0\}$. Then there exists a constant $c=c(f,\rho)$ such that
\[
\int\Big( \psi_f(\ell,\eta)-\varphi_f(\rho)-\varphi'_f(\rho)\big(\eta^\ell-\rho\big) -\frac{\varphi''_f(\rho)}{2}\Big(\big(\eta^\ell-\rho\big)^2-\frac{\chi(\rho)}{\ell}\Big)\Big)^2 d\nu_\rho \leq \frac{c}{\ell^3}
\]
for any $\ell \geq \ell_0$. In particular,
\begin{itemize}
\item[i)] if $\varphi_f(\rho)=0$, we can choose $c$ in such a way that
\[
\Var(\psi_f(\ell);\nu_\rho) \leq \frac{c}{\ell},
\]
\item[ii)] if $\varphi_f(\rho) = \varphi'_f(\rho)=0$, we can choose $c$ in such a way that
\[
\Var(\psi_f(\ell);\nu_\rho) \leq \frac{c}{\ell^2},
\]
\end{itemize}
for any $\ell \geq \ell_0$.
\end{proposition}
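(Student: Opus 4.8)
The plan is to reduce everything to the equivalence of ensembles (Proposition \ref{p5}) together with elementary central-moment estimates for the empirical density. Since $\supp(f)\subseteq\{1,\dots,\ell_0\}$, the map $\beta\mapsto\varphi_f(\beta)=\int f\,d\nu_\beta$ is a polynomial on $[0,1]$ of degree at most $\ell_0$; consequently $\varphi_f$, $\varphi_f''$ and $\chi\varphi_f''$ are smooth with all derivatives bounded on $[0,1]$ by constants depending only on $f$. Writing $\eta^\ell-\rho=\ell^{-1}\sum_{x=1}^\ell(\eta(x)-\rho)$ as a normalized sum of i.i.d.\ bounded mean-zero variables under $\nu_\rho$, I would first record the standard bounds $\int(\eta^\ell-\rho)^{2p}\,d\nu_\rho\leq c_p\ell^{-p}$ and $\int(\eta^\ell-\rho)^{2p+1}\,d\nu_\rho\leq c_p\ell^{-(p+1)}$, obtained by expanding the power and discarding every monomial containing a factor to the first power (which integrates to zero).

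Abbreviating the quadratic approximant as
\[
 G(\eta)=\varphi_f(\rho)+\varphi_f'(\rho)(\eta^\ell-\rho)+\tfrac12\varphi_f''(\rho)\big((\eta^\ell-\rho)^2-\tfrac{\chi(\rho)}{\ell}\big),
\]
I would decompose $\psi_f(\ell;\eta)-G(\eta)$ into three contributions. By Proposition \ref{p5}, $\psi_f(\ell;\cdot)$ differs from the second-order approximant $\varphi_f(\eta^\ell)-\tfrac{1}{2\ell}\chi(\eta^\ell)\varphi_f''(\eta^\ell)$ by a term uniformly of order $\ell^{-2}$, and its $O(\ell^{-1})$ correction, evaluated at density $\rho$, is precisely the constant $-\tfrac{1}{2\ell}\chi(\rho)\varphi_f''(\rho)$ inserted in $G$. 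The remaining discrepancy splits as the Taylor remainder $\varphi_f(\eta^\ell)-\varphi_f(\rho)-\varphi_f'(\rho)(\eta^\ell-\rho)-\tfrac12\varphi_f''(\rho)(\eta^\ell-\rho)^2$, which is $O((\eta^\ell-\rho)^3)$ by Taylor's theorem, plus the correction fluctuation $-\tfrac{1}{2\ell}\big(\chi(\eta^\ell)\varphi_f''(\eta^\ell)-\chi(\rho)\varphi_f''(\rho)\big)$, which is $O((\eta^\ell-\rho)/\ell)$ because $\chi\varphi_f''$ is Lipschitz on $[0,1]$. Applying $(a+b+c)^2\leq3(a^2+b^2+c^2)$ and the moment bounds (using $|\eta^\ell-\rho|\leq1$ to dominate powers higher than six by the sixth), the three pieces contribute to $\int(\psi_f(\ell;\eta)-G(\eta))^2\,d\nu_\rho$ at most $c\ell^{-4}$, $c\int(\eta^\ell-\rho)^6\,d\nu_\rho\leq c\ell^{-3}$, and $c\ell^{-2}\int(\eta^\ell-\rho)^2\,d\nu_\rho\leq c\ell^{-3}$ respectively, which is the asserted bound.

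For the two corollaries I would use that the variance is unchanged by additive constants and is bounded by the second moment about any centering, so that
\[
 \Var(\psi_f(\ell);\nu_\rho)^{1/2}\leq\Var(G;\nu_\rho)^{1/2}+\Big(\int(\psi_f(\ell;\eta)-G(\eta))^2\,d\nu_\rho\Big)^{1/2},
\]
the last term being $O(\ell^{-3/2})$ by the main estimate. Since both non-constant terms of $G$ have mean zero, a direct computation with the moment bounds gives $\Var(G;\nu_\rho)=\varphi_f'(\rho)^2\chi(\rho)\,\ell^{-1}+O(\ell^{-2})$, the error collecting the variance of $(\eta^\ell-\rho)^2-\chi(\rho)/\ell$ (controlled through $\int(\eta^\ell-\rho)^4\leq c\ell^{-2}$) and the cross covariance (controlled through $\int(\eta^\ell-\rho)^3\leq c\ell^{-2}$). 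In case (i) this yields $\Var(\psi_f(\ell);\nu_\rho)\leq c\ell^{-1}$, and in case (ii), where the hypothesis $\varphi_f'(\rho)=0$ annihilates the leading term, it yields $\Var(\psi_f(\ell);\nu_\rho)\leq c\ell^{-2}$.

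The only genuinely delicate point is the bookkeeping of orders in $1/\ell$: the $O(\ell^{-1})$ correction coming from the equivalence of ensembles must be Taylor-expanded around $\rho$ so that its constant part is absorbed exactly into the $-\tfrac{\chi(\rho)}{\ell}$ term built into $G$, leaving only an $O((\eta^\ell-\rho)/\ell)$ fluctuation. This cancellation is the reason that constant is present in the statement, and it is precisely what upgrades the naive $O(\ell^{-1})$ estimate to the sharp $O(\ell^{-3})$ one.
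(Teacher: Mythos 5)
Your proof is correct and takes essentially the same route as the paper's: Taylor-expand the equivalence-of-ensembles approximant of Proposition \ref{p5} around $\rho$, and control the three remainders (the uniform $O(\ell^{-2})$ error, the cubic Taylor remainder, and the $O\big((\eta^\ell-\rho)/\ell\big)$ fluctuation of the correction) in $L^2(\nu_\rho)$ via central moment bounds for $\eta^\ell-\rho$, which is exactly the paper's argument. Your explicit handling of the variance corollaries through the standard-deviation triangle inequality, and your observation that the constant $-\chi(\rho)\varphi_f''(\rho)/2\ell$ must cancel exactly against the equivalence-of-ensembles correction evaluated at $\rho$, merely spell out what the paper dispatches as ``simple consequences of the first estimate.''
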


Here and below, $\Var(f;\nu_\rho)$ denotes the variance of the function $f$ with respect to the probability measure $\nu_\rho$.

\begin{proof}
It is enough to use Taylor expansion in Proposition \ref{p5} up to the right order. Notice that
\[
\varphi_f(\eta^\ell) = \varphi_f(\rho)+\varphi_f'(\rho)\big(\eta^\ell-\rho\big) +\frac{\varphi_f''(\rho)}{2}\big(\eta^\ell-\rho\big)^2 + \delta_1(\eta^\ell;\rho)\big(\eta^\ell-\rho\big)^3,
\]
where $\delta_1(\eta^\ell;\rho)$ is uniformly bounded in $\eta$, $\ell$ and $\rho$. Similarly,
\[
\frac{\chi(\eta^\ell)}{2\ell} \varphi_f''(\eta^\ell) = \frac{1}{2\ell} \chi(\rho)\varphi_f''(\rho)+\delta_2(\eta^\ell;\rho) \frac{1}{\ell} \big(\eta^\ell-\rho\big),
\]
where $\delta_2(\eta^\ell;\rho)$ is uniformly  bounded in $\eta$, $\ell$ and $\rho$. The $L^2(\nu_\rho)$-norms of $(\eta^\ell-\rho)^3$ and $\frac{1}{\ell}(\eta^\ell-\rho)$ are bounded by $1/\ell^{3/2}$, which proves the first estimate. The other two cases are simple consequences of the first estimate.
\end{proof}

\subsection{Spectral tools}
\label{s2.1}
For $f, g \in L^2(\nu_\rho)$ we denote $\<f,g\>_\rho = \int fg d\nu_\rho$.
Let $f \in L^2(\bb R)$ be such that $\varphi_f(\beta) =0$ for any $\beta \in [0,1]$. Let us define the $H_{-1}$-norm of $f$ as
\begin{equation}
\label{h-1}
 \|f\|_{-1}^2 = \sup_{g \text{ local}}\{ 2\<f,g\>_\rho - \<g,-Lg\>_\rho\}.
\end{equation}
In \cite{KV}, the authors obtained a sharp estimate for the variance of the additive functional associated to $f$:

\begin{proposition}[Kipnis-Varadhan inequality \cite{KV,CLO}]
 \label{p2}
 For any $t \geq 0$,
\begin{equation}
\label{kv}
\bb E_\rho\Big[\Big( \int_0^t f(\eta_s)ds\Big)^2\Big] \leq 18 t \|f\|_{-1}^2.
\end{equation}
\end{proposition}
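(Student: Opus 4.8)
The plan is to run the classical resolvent argument of Kipnis and Varadhan in its elementary, constant-explicit form. First I would fix $\lambda>0$ and introduce the resolvent function $f_\lambda := (\lambda - L)^{-1} f$, i.e.\ the solution in $L^2(\nu_\rho)$ of $\lambda f_\lambda - L f_\lambda = f$; this exists because $-L$ has a non-negative symmetric part on $L^2(\nu_\rho)$ (for the lattice gas of Section \ref{s1.1} the operator $-L$ is in fact self-adjoint and non-negative). By Dynkin's formula,
\[
 M_t^\lambda = f_\lambda(\eta_t) - f_\lambda(\eta_0) - \int_0^t L f_\lambda(\eta_s)\,ds
\]
is a martingale, and replacing $L f_\lambda = \lambda f_\lambda - f$ yields the exact identity
\[
 \int_0^t f(\eta_s)\,ds = M_t^\lambda + f_\lambda(\eta_0) - f_\lambda(\eta_t) + \lambda \int_0^t f_\lambda(\eta_s)\,ds.
\]
I would then bound the second moment of the three groups of terms on the right separately and recombine them with $(a+b+c)^2 \le 3(a^2+b^2+c^2)$.

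The whole estimate hinges on a single bound on $f_\lambda$ read off from the variational formula \eqref{h-1}. Writing $\|g\|_\rho^2 := \<g,g\>_\rho$ and pairing the resolvent equation with $f_\lambda$ gives $\<f,f_\lambda\>_\rho = \lambda\|f_\lambda\|_\rho^2 + \<f_\lambda,-Lf_\lambda\>_\rho$. Choosing $g = f_\lambda$ as a test function in \eqref{h-1} (legitimate after approximating $f_\lambda$ by local functions) and substituting this identity, I obtain
\[
 2\lambda\|f_\lambda\|_\rho^2 + \<f_\lambda,-Lf_\lambda\>_\rho \le \|f\|_{-1}^2,
\]
and in particular $\lambda\|f_\lambda\|_\rho^2 \le \tfrac12\|f\|_{-1}^2$ and $\<f_\lambda,-Lf_\lambda\>_\rho \le \|f\|_{-1}^2$.

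Now the three terms. The martingale $M_t^\lambda$ has predictable quadratic variation equal to the time-integral of the carr\'e du champ $\Gamma(f_\lambda) = L(f_\lambda^2) - 2 f_\lambda L f_\lambda$; since $\int \Gamma(g)\,d\nu_\rho = 2\<g,-Lg\>_\rho$ by invariance of $\nu_\rho$, stationarity gives $\bb E_\rho[(M_t^\lambda)^2] = 2t\,\<f_\lambda,-Lf_\lambda\>_\rho \le 2t\|f\|_{-1}^2$. Stationarity also yields $\bb E_\rho[(f_\lambda(\eta_0)-f_\lambda(\eta_t))^2] \le 4\|f_\lambda\|_\rho^2$ for the boundary term, while Cauchy--Schwarz in the time variable gives $\bb E_\rho[(\lambda\int_0^t f_\lambda(\eta_s)\,ds)^2] \le \lambda^2 t^2 \|f_\lambda\|_\rho^2$ for the drift term. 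Taking $\lambda = 1/t$ (the case $t=0$ being trivial) makes $\lambda^2 t^2 = 1$ and $\|f_\lambda\|_\rho^2 \le \tfrac{t}{2}\|f\|_{-1}^2$, so the boundary and drift contributions are at most $2t\|f\|_{-1}^2$ and $\tfrac{t}{2}\|f\|_{-1}^2$. Collecting,
\[
 \bb E_\rho\Big[\Big(\int_0^t f(\eta_s)\,ds\Big)^2\Big] \le 3\Big(2t + \tfrac{t}{2} + 2t\Big)\|f\|_{-1}^2 = \tfrac{27}{2}\,t\,\|f\|_{-1}^2 \le 18\,t\,\|f\|_{-1}^2,
\]
which is the claim.

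The hard part is not this arithmetic but the functional-analytic groundwork that makes it rigorous: one must check that $f_\lambda$ lies in the domain of $L$ so that Dynkin's formula and the carr\'e-du-champ identity apply, which, because $f_\lambda$ is generally not local, requires the Hille--Yosida construction of $L$ on an appropriate core (available from \cite{Lig}) together with an approximation of $f_\lambda$ by local functions; the same approximation justifies using $g=f_\lambda$ in the supremum \eqref{h-1}. For the non-reversible mean-zero exclusion process one runs the identical argument with $-L$ replaced throughout by its symmetric part $S = -\tfrac12(L+L^*)$, noting that $\<g,-Lg\>_\rho = \<g,Sg\>_\rho$ since the antisymmetric part of $L$ contributes zero to this quadratic form; this is precisely the extension of Kipnis--Varadhan's bound recorded in \cite{CLO}.
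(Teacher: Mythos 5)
The paper itself gives no proof of Proposition \ref{p2}: it cites \cite{KV} for the reversible case and \cite{CLO} (see also \cite{SVY}) for the version avoiding reversibility, and your resolvent argument is precisely the proof from those references, correctly executed — the bound $2\lambda\|f_\lambda\|_\rho^2+\<f_\lambda,-Lf_\lambda\>_\rho\le\|f\|_{-1}^2$ obtained by testing \eqref{h-1} with $g=f_\lambda$, the three-term decomposition, and the choice $\lambda=1/t$ yield $\tfrac{27}{2}t\|f\|_{-1}^2\le 18t\|f\|_{-1}^2$ as claimed. Your closing paragraph also correctly isolates the genuine technical content (domain and approximation issues for Dynkin's formula and the carr\'e du champ, and the observation that every step uses only invariance of $\nu_\rho$ so that in the non-reversible case the quadratic forms reduce to the symmetric part of $L$), so the proposal is a sound, self-contained rendering of the cited argument.
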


The original proof of \cite{KV} works only when $\nu_\rho$ is reversible with respect to the evolution of $\eta_t$. A proof avoiding reversibility can be found in \cite{CLO} (see also \cite{SVY}).

This estimate is not really helpful unless we have an efficient way to estimate $\|f\|_{-1}^2$. A very useful tool to estimate $\|f\|_{-1}^2$ is the so-called {\em spectral gap inequality} for the operator $L$. For $\ell \in \bb N$ and a local function $f:\Omega \to \bb R$ such that $\supp(f) \subseteq \{1,...,\ell\}$ we define the {\em energy form} $\mc E_\ell(f)$ as
\[
 \mc E_\ell(f) = \sum_{x=1}^{\ell-1} \int \big(\nabla_{x,x+1} f\big)^2 d\nu_\rho.
\]

Notice that in the case $r \equiv 1$, $\mc E_\ell(f) = \<f,-L_\ell f\>_\rho$, where $L_\ell$ is the generator of the process $\eta_t$ restricted to the interval $\{1,...,\ell\}$. Moreover, under the ellipticity condition we have $\varepsilon_0 \mc E_\ell(f) \leq \<f,-L_\ell f\>_\rho \leq \varepsilon_0^{-1} \mc E_\ell (f)$.

\begin{proposition}[Spectral gap inequality \cite{Qua,DS-C}]
\label{p3}
 There exists a universal constant $\kappa_0$ such that
\[
 \int f^2 d\nu_\rho \leq \kappa_0 \ell^2 \mc E_\ell(f)
\]
for any $\ell \in \bb N$ and any $f:\Omega \to \bb R$ such that $\varphi_f(\beta)=0$ for any $\beta \in [0,1]$ and such that $\supp(f) \subseteq \{1,...,\ell\}$.
\end{proposition}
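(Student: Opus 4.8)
The plan is to reduce this grand-canonical inequality to the classical \emph{canonical} spectral gap for the symmetric simple exclusion process, which is the content of the cited references, by decomposing over the number of particles in the box. Observe first that neither $\mc E_\ell$ nor the hypothesis involves the rate $r$, so the dynamics at play is the rate-one nearest-neighbor exchange process on $\{1,\dots,\ell\}$, whose generator I call $L_\ell$; as noted right after the definition of $\mc E_\ell$, for any measure $\mu$ invariant under the swaps $\eta\mapsto\eta^{x,x+1}$ the quantity $\langle g,-L_\ell g\rangle_\mu$ is a fixed universal multiple of $\sum_{x=1}^{\ell-1}\int(\nabla_{x,x+1}g)^2\,d\mu$, and I will use this for $\mu=\nu_\rho$ and for the canonical measures introduced below.

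First I would decompose along the fibers of $N_\ell:=\sum_{x=1}^\ell\eta(x)$. For $0\le k\le\ell$ let $\nu_{\ell,k}$ be the uniform measure on configurations of $\{1,\dots,\ell\}$ carrying exactly $k$ particles, that is, $\nu_\rho(\,\cdot\mid N_\ell=k)$ restricted to the box. Since $\nu_\rho$ is a product measure and $\supp(f)\subseteq\{1,\dots,\ell\}$, conditioning on $N_\ell$ gives both
\[
\int f^2\,d\nu_\rho=\sum_{k=0}^\ell\nu_\rho(N_\ell=k)\int f^2\,d\nu_{\ell,k}
\]
and the companion identity $\mc E_\ell(f)=\sum_{k=0}^\ell\nu_\rho(N_\ell=k)\sum_{x=1}^{\ell-1}\int(\nabla_{x,x+1}f)^2\,d\nu_{\ell,k}$. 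Next I would translate the hypothesis into fiberwise centering: writing $f_k:=\int f\,d\nu_{\ell,k}$, the tower property gives $\varphi_f(\beta)=\sum_{k=0}^\ell\binom{\ell}{k}\beta^k(1-\beta)^{\ell-k}f_k$, and since the Bernstein polynomials are linearly independent, the assumption $\varphi_f\equiv0$ on $[0,1]$ forces $f_k=0$ for every $k$. Hence $\int f^2\,d\nu_{\ell,k}=\Var(f;\nu_{\ell,k})$ on each fiber.

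The core step is then to invoke the canonical spectral gap. On each fiber the exchange dynamics is irreducible and reversible for $\nu_{\ell,k}$, and the cited results furnish a \emph{universal} constant so that the spectral gap $\mathrm{gap}(\ell,k)$ is bounded below by a multiple of $\ell^{-2}$ \emph{uniformly in} $k$; combined with the Dirichlet-form identity above this yields a single $\kappa_0$, independent of $\ell$ and $k$, with
\[
\Var(f;\nu_{\ell,k})\le\kappa_0\,\ell^2\sum_{x=1}^{\ell-1}\int(\nabla_{x,x+1}f)^2\,d\nu_{\ell,k}.
\]
Multiplying by $\nu_\rho(N_\ell=k)$, summing over $k$, and recombining through the two identities of the previous paragraph would then recover $\int f^2\,d\nu_\rho\le\kappa_0\,\ell^2\,\mc E_\ell(f)$.

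The hard part is entirely concentrated in the uniform-in-$k$ bound $\mathrm{gap}(\ell,k)\ge c\,\ell^{-2}$: the subtlety is that the constant $c$ must not deteriorate as the density $k/\ell$ approaches $1/2$, where the canonical measure is most strongly correlated and a crude comparison argument would lose a factor growing with $\ell$. This uniformity is precisely the deep analytic content of the cited references, so I would treat it as a black box and present only the elementary reduction in the first three paragraphs.
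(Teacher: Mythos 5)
Your reduction is correct, but note that the paper itself gives no proof of Proposition \ref{p3}: it is stated as a known result, imported from \cite{Qua} and \cite{DS-C}, and the authors explicitly prove only the statements for which they ``did not find exact references'' (Propositions \ref{p5}, \ref{p6}, \ref{p4}, \ref{p7}). So there is nothing in the paper to compare your argument against step by step; what you have written is essentially the standard derivation that sits behind the citation. Your three reduction steps are all sound: the conditional decomposition of both $\int f^2 d\nu_\rho$ and $\mc E_\ell(f)$ over the fibers of $N_\ell = \sum_{x=1}^\ell \eta(x)$ is legitimate because $\nu_\rho$ is product and every swap $\eta \mapsto \eta^{x,x+1}$ with $1 \leq x < \ell$ preserves $N_\ell$; the Bernstein-polynomial argument correctly identifies the hypothesis $\varphi_f(\beta)=0$ for \emph{all} $\beta \in [0,1]$ as exactly fiberwise centering, $E[f \mid N_\ell = k]=0$ for every $k$ (this is indeed why the proposition is stated with the condition holding for all $\beta$ rather than only at $\rho$ --- a point worth making explicit, as you did); and the recombination after applying the canonical gap on each fiber is immediate. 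The canonical bound $\mathrm{gap}(\ell,k) \geq c\,\ell^{-2}$ uniformly in $k$ is precisely what \cite{Qua} provides, so black-boxing it is the right call.

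One small inaccuracy in your closing paragraph: for the nearest-neighbor symmetric exclusion on a segment there is no deterioration of the constant near density $k/\ell = 1/2$ to worry about --- the canonical gap is in fact \emph{independent of} $k$ (it coincides with the gap $1-\cos(\pi/\ell)$ of the single-particle random walk on the segment, as one sees via the dual/color description), and the degenerate fibers $k=0,\ell$ are harmless since the variance vanishes there. Uniformity in the density is a genuine difficulty for other models (e.g.\ zero-range dynamics), but not here; so the ``hard part'' you isolate is real analytic content of the references, just not for the reason you give. Also, be slightly careful with normalizations: with the paper's conventions $\langle g, -L_\ell g\rangle_\rho = \tfrac{1}{2}\sum_{x=1}^{\ell-1}\int (\nabla_{x,x+1} g)^2 d\nu_\rho$ when $r \equiv 1$, so the ``universal multiple'' you invoke is $\tfrac{1}{2}$; this only shifts $\kappa_0$ and does not affect the argument.
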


By translation invariance, a similar statement holds when the support of the local function $f$ is contained in some interval of the form $\{k+1,...,k+\ell\}$.
The following proposition explains how to use the spectral gap inequality in order to estimate $\|f\|_{-1}^2$.

\begin{proposition}
 \label{p4}
For any function $f$ satisfying the hypothesis of Proposition \ref{p3},
\[
 \|f\|_{-1}^2 \leq \frac{\kappa_0}{\varepsilon_0} \ell^2 \Var(f;\nu_\rho).
\]
\end{proposition}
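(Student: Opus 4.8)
The plan is to work directly from the variational characterization \eqref{h-1} of the $H_{-1}$-norm. For each fixed local test function $g$, I would bound the linear term $2\<f,g\>_\rho$ by the Dirichlet form $\<g,-Lg\>_\rho$ plus an error of order $\ell^2\Var(f;\nu_\rho)$, so that after taking the supremum over $g$ the two Dirichlet-form contributions cancel and only the desired error survives. The spectral gap inequality of Proposition \ref{p3} is the tool that converts an $L^2$-norm into a (box) energy form, so the whole game is to manoeuvre $g$ into a function to which Proposition \ref{p3} applies.

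First I would replace $g$ by a suitable projection. Since $\supp(f)\subseteq\{1,\dots,\ell\}$, the function $f$ is measurable with respect to $\mathcal F_\ell:=\sigma(\eta(x);\,1\le x\le \ell)$, hence $\<f,g\>_\rho=\<f,\bar g\>_\rho$ with $\bar g=E[g\mid \mathcal F_\ell]$. Writing $N_\ell=\sum_{x=1}^\ell \eta(x)$, the hypothesis $\varphi_f(\beta)=0$ for every $\beta\in[0,1]$ forces every canonical mean of $f$ to vanish: expanding $\varphi_f(\beta)=\sum_k E[f\mid N_\ell=k]\binom{\ell}{k}\beta^k(1-\beta)^{\ell-k}$ and invoking the linear independence of the Bernstein polynomials shows $E[f\mid N_\ell=k]=0$ for all $k$. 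Thus $f$ is orthogonal in $L^2(\nu_\rho)$ to every $\sigma(N_\ell)$-measurable function, and setting $\tilde g=\bar g-E[\bar g\mid N_\ell]$ we get $\<f,g\>_\rho=\<f,\tilde g\>_\rho$ while $\tilde g$ is $\mathcal F_\ell$-measurable with vanishing canonical means, i.e. it satisfies exactly the hypotheses of Proposition \ref{p3}.

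Next I would control the energy form of $\tilde g$ by the Dirichlet form of $g$. For an interior bond $(x,x+1)$ with $1\le x\le \ell-1$, the swap $\eta\mapsto\eta^{x,x+1}$ preserves $\mathcal F_\ell$ and fixes $N_\ell$, so $\nabla_{x,x+1}\tilde g=\nabla_{x,x+1}\bar g=E[\nabla_{x,x+1}g\mid \mathcal F_\ell]$, and conditional Jensen gives $\int(\nabla_{x,x+1}\tilde g)^2\,d\nu_\rho\le\int(\nabla_{x,x+1}g)^2\,d\nu_\rho$. Summing over $x=1,\dots,\ell-1$, discarding the bonds outside the box, and using the ellipticity bound of Section \ref{s2.1} yields $\varepsilon_0\,\mc E_\ell(\tilde g)\le\<g,-Lg\>_\rho$. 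Now Cauchy--Schwarz together with Proposition \ref{p3} (recall $\Var(f;\nu_\rho)=\<f,f\>_\rho$ since $\varphi_f(\rho)=0$) gives $2\<f,\tilde g\>_\rho\le 2\<f,f\>_\rho^{1/2}\sqrt{\kappa_0}\,\ell\,\mc E_\ell(\tilde g)^{1/2}$, and Young's inequality $2ab\le \varepsilon_0^{-1}a^2+\varepsilon_0 b^2$ with $a=\sqrt{\kappa_0}\,\ell\,\<f,f\>_\rho^{1/2}$ and $b=\mc E_\ell(\tilde g)^{1/2}$ produces
\[
2\<f,g\>_\rho-\<g,-Lg\>_\rho=2\<f,\tilde g\>_\rho-\<g,-Lg\>_\rho\le \frac{\kappa_0}{\varepsilon_0}\,\ell^2\,\Var(f;\nu_\rho)+\varepsilon_0\,\mc E_\ell(\tilde g)-\<g,-Lg\>_\rho\le\frac{\kappa_0}{\varepsilon_0}\,\ell^2\,\Var(f;\nu_\rho).
\]
Taking the supremum over local $g$ in \eqref{h-1} finishes the argument.

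I expect the main obstacle to be the projection step: one must verify carefully that passing from $g$ to its double projection $\tilde g$ leaves $\<f,g\>_\rho$ untouched \emph{and} does not increase the interior energy form, which relies simultaneously on $f$ being supported in $\{1,\dots,\ell\}$ (to freely replace $g$ by $\bar g$) and on the vanishing of all canonical means of $f$ (to subtract off the $\sigma(N_\ell)$-part without changing the pairing). The remaining steps are routine, and the precise constant $\kappa_0/\varepsilon_0$ in the statement comes out only because the weight in Young's inequality is matched to the ellipticity constant $\varepsilon_0$.
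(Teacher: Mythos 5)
Your proof is correct and follows essentially the same route as the paper: your double projection $\tilde g=E[g\mid\mc F_\ell]-E[g\mid N_\ell]$ is exactly the paper's $\bar g=g_\ell-\psi_g(\ell)$, and your Young-inequality step is the paper's completion of the square after applying Proposition \ref{p3} to the projected test function. The only difference is cosmetic: you spell out the Bernstein-polynomial argument for the vanishing of the canonical means of $f$ and the bond-wise commutation of swaps with conditioning, both of which the paper subsumes in its two appeals to Jensen's inequality.
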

\begin{proof}
 Define $\mc F_\ell = \sigma(\eta(x); x=1,...,\ell)$. Let $g: \Omega \to \bb R$ be a local function and define $g_\ell = E[g|\mc F_\ell]$. Here and below, conditional expectations are taken with respect to the measure $\nu_\rho$. By Jensen's inequality, $\<g_\ell,-L_\ell g_\ell\>_\rho \leq \<g,-L_\ell g\>_\rho \leq \<g,-L g\>_\rho$. Since $f$ is $\mc F_\ell$-measurable, $\<f,g\>_\rho = \<f,g_\ell\>_\rho$. Therefore,
\[
 2\<f,g\>_\rho -\<g,-Lg\>_\rho \geq 2\<f,g_\ell\>_\rho -\<g_\ell,-L_\ell g_\ell\>_\rho.
\]
Let us define $\bar g = g_\ell -\psi_g(\ell)$. The function $\bar g$ satisfies $\varphi_{\bar g}(\beta)=0$, for any $\beta \in [0,1]$. Since we also have $\int f d\nu_\beta =0$ for any $\beta \in [0,1]$, we see that $\<f,g_\ell\>_\rho = \<f,\bar g\>_\rho$. Using once more Jensen's inequality, $\<\bar g,-L_\ell \bar g \>_\rho \leq \<g_\ell, -L_\ell g_\ell\>_\rho$. Therefore,
\[
 2\<f,g\>_\rho -\<g,-Lg\>_\rho \geq 2\<f,\bar g\>_\rho -\<\bar g,-L_\ell \bar g\>_\rho.
\]
Notice that $\bar g$ satisfies the hypothesis of Proposition \ref{p3}. Therefore,
\[
\<\bar g,-L_\ell \bar g\>_\rho \geq \varepsilon_0 \mc E_\ell(g) \geq \frac{\varepsilon_0}{\kappa_0 \ell^2} \Var(\bar g;\nu_\rho).
\]
We conclude that
\[
 \begin{split}
 \|f\|_{-1}^{2}:=\sup_{g \text{ local}} \big\{ 2\<f,g\>_\rho -\<g,-Lg\>_\rho\big\}
    &\leq \sup_{g \in \mc F_\ell} \big\{ 2\<f,\bar g\>_\rho -\<\bar g,-L_\ell \bar g\>_\rho\big\}\\
    &\leq \sup_{g \in \mc F_\ell} \big\{ 2\<f,\bar g\>_\rho -\frac{\varepsilon_0}{\kappa_0 \ell^2} \Var(\bar g;\nu_\rho)\big\}\\
    &\leq \frac{\kappa_0 \ell^2}{\varepsilon_0} \Var(f;\nu_\rho).\\
 \end{split}
\]
In last inequality we used the Cauchy-Schwarz inequality.
This ends the proof.
\end{proof}

The following proposition roughly states that functions supported on disjoint intervals are orthogonal with respect to the $H_{-1}$-norm.

\begin{proposition}
\label{p7}
Let $m \in \bb N$ be given. Take a sequence $k_0 < ...<k_m$ in $\bb Z$ and let $\{f_1,...,f_m\}$ be a sequence of local functions from $\Omega$ to $\bb R$ such that $\supp(f_i) \subseteq \{k_{i-1}+1,...,k_i\}$ for any $i \in \{1,...,m\}$. Define $\ell_i = k_i-k_{i-1}$. Assume that $\varphi_{f_i}(\beta)=0$ for any $\beta \in [0,1]$ and any $i \in \{1,...,m\}$. Then,
\[
\|f_1+...+f_m\|_{-1}^2 \leq \sum_{i=1}^m \frac{\kappa_0 \ell_i^2}{\varepsilon_0} \Var(f_i;\nu_\rho).
\]
\end{proposition}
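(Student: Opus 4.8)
The plan is to mimic the proof of Proposition~\ref{p4}, but carried out simultaneously over the $m$ disjoint intervals $I_i := \{k_{i-1}+1,\dots,k_i\}$, exploiting the variational formula~\eqref{h-1} together with the additivity of the Dirichlet form over bonds. First I would record that, by reversibility of $\nu_\rho$,
\[
\<g,-Lg\>_\rho = \frac{1}{2} \sum_{x \in \bb Z} \int r_x \big(\nabla_{x,x+1} g\big)^2\, d\nu_\rho .
\]
Writing $L_{I_i} = \sum_{x=k_{i-1}+1}^{k_i-1} r_x \nabla_{x,x+1}$ for the generator restricted to $I_i$, the interior bonds of the distinct intervals $I_i$ are pairwise disjoint subsets of $\bb Z$, so discarding the remaining (non-negative) bond terms gives
\[
\<g,-Lg\>_\rho \;\geq\; \sum_{i=1}^m \<g,-L_{I_i} g\>_\rho .
\]
This is the only place where disjointness of the supports enters, and it is the step that replaces the single bound of Proposition~\ref{p4}.

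Granting this, for an arbitrary local $g$ I would use linearity of $f = f_1 + \dots + f_m$ and the previous inequality to obtain
\[
2\<f,g\>_\rho - \<g,-Lg\>_\rho \;\leq\; \sum_{i=1}^m \Big\{ 2\<f_i,g\>_\rho - \<g,-L_{I_i} g\>_\rho \Big\},
\]
and then estimate each summand exactly as in Proposition~\ref{p4}, now relative to $I_i$. Setting $\mc F_i = \sigma(\eta(x); x \in I_i)$ and $g_i = E[g \mid \mc F_i]$, Jensen's inequality gives $\<g_i,-L_{I_i} g_i\>_\rho \leq \<g,-L_{I_i} g\>_\rho$, while $\mc F_i$-measurability of $f_i$ yields $\<f_i,g\>_\rho = \<f_i,g_i\>_\rho$. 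Subtracting from $g_i$ its conditional expectation given the number of particles in $I_i$ produces $\bar g_i$ supported on $I_i$ with $\varphi_{\bar g_i}(\beta)=0$ for all $\beta$; since $\int f_i\, d\nu_\beta=0$ for all $\beta$ the subtracted term is orthogonal to $f_i$, so $\<f_i,g_i\>_\rho = \<f_i,\bar g_i\>_\rho$, and because that term is a function of the conserved particle number it does not increase the restricted Dirichlet form.

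Finally I would apply the translation-invariant version of the spectral gap inequality (Proposition~\ref{p3}) to $\bar g_i$ on $I_i$: together with the ellipticity bound $\<\bar g_i,-L_{I_i}\bar g_i\>_\rho \geq \varepsilon_0 \sum_{x=k_{i-1}+1}^{k_i-1}\int(\nabla_{x,x+1}\bar g_i)^2\, d\nu_\rho$ this gives $\<\bar g_i,-L_{I_i}\bar g_i\>_\rho \geq \frac{\varepsilon_0}{\kappa_0 \ell_i^2}\Var(\bar g_i;\nu_\rho)$. Hence each summand is at most $2\<f_i,\bar g_i\>_\rho - \frac{\varepsilon_0}{\kappa_0 \ell_i^2}\Var(\bar g_i;\nu_\rho)$, and optimizing this quadratic expression in $\bar g_i$ by Cauchy--Schwarz bounds it by $\frac{\kappa_0 \ell_i^2}{\varepsilon_0}\Var(f_i;\nu_\rho)$. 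Summing over $i$ and taking the supremum over local $g$ in \eqref{h-1} proves the claim. I expect the only delicate point to be the Dirichlet-form decomposition: one must check that after splitting, each summand involves only bonds internal to a single $I_i$, so that the conditional expectations onto the different $\mc F_i$ may be introduced independently without any surviving cross terms.
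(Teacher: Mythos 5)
Your proof is correct and follows essentially the same route as the paper's: both arguments discard the bonds joining distinct intervals, condition $g$ onto each $\sigma$-algebra $\mc F_i$ via Jensen's inequality, recenter by the conditional expectation given the number of particles in $I_i$ (which leaves both $\<f_i,\cdot\>_\rho$ and the restricted Dirichlet form unchanged), and then apply the spectral gap inequality plus Cauchy--Schwarz interval by interval, exactly as in Proposition \ref{p4}. The only differences are cosmetic: the paper conditions first and then restricts the Dirichlet form rather than splitting it first, and your factor $\tfrac{1}{2}$ in the Dirichlet-form identity is a normalization the paper glosses over and absorbs into the universal constant $\kappa_0$.
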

\begin{proof}
Let us define $\mc G_i = \sigma(\eta(x); x=k_{i-1}+1,...,k_i)$. Let $g:\Omega \to \bb R$ be a local function and define $g^i = E[g|\mc G_i]$.  Let $f:=f_1+\cdots+f_m$. We have that
\[
\<f,g\>_\rho = \sum_{i=1}^m \<f_i,g^i\>_\rho
\]
and by Jensen's inequality we have that
\[
\<g,-Lg\> \geq \sum_{i=1}^m \sum_{x=k_{i-1}+1}^{k_i-1} \int r_x\big( \nabla_{x,x+1}g)^2 d\nu_\rho \geq \sum_{i=1}^m \sum_{x=k_{i-1}+1}^{k_i-1} \int r_x\big( \nabla_{x,x+1}g^i)^2 d\nu_\rho.
\]
Let us write $L^i$ for the generator of the process $\eta_t$ restricted to the interval $\{k_{i-1}+1,...,k_i\}$.
Following the proof of Proposition \ref{p4} we obtain that
\[
\begin{split}
\|f\|_{-1}^2= \sup_{g \text{ local}} \big\{ 2\<f,g\>_\rho -\<g,-Lg\>_\rho\big\}
		&\leq \sup_{g \text{ local}} \sum_{i=1}^m \big\{ 2\<f_i,g^i\>_\rho -\<g^i,-L^ig^i\>_\rho\}\\
		&\leq \sum_{i=1}^m \sup_{g \in \mc G_i}\big\{  2\<f_i,\bar g\>_\rho - \frac{\varepsilon_0}{\kappa_0 \ell_i^2} \Var(\bar g;\nu_\rho)\big\}\\
		& \leq \sum_{i=1}^m \frac{\kappa_0 \ell^2_i}{\varepsilon_0} \Var(f_i;\nu_\rho).
\end{split}
\]
\end{proof}

Putting all the estimates of this section together we obtain the following estimate on the variance of sums of additive functionals of the process $\eta_t$:

\begin{proposition}
\label{p8}
Let $m \in \bb N$ be given. Take a sequence $k_0 < ...<k_m$ in $\bb Z$ and let $\{f_1,...,f_m\}$ be a sequence of local functions from $\Omega$ to $\bb R$ such that $\supp(f_i) \subseteq \{k_{i-1}+1,...,k_i\}$ for any $i \in \{1,...,m\}$. Define $\ell_i = k_{i}-k_{i-1}$. Assume that $\varphi_{f_i}(\beta)=0$ for any $\beta \in [0,1]$ and any $i \in \{1,...,m\}$. Then, for any $t \geq 0$
\[
\bb E_\rho\Big[\Big(\int_0^t \sum_{i=1}^m f_i(\eta_s) ds \Big)^2\Big] \leq \frac{18\kappa_0  t}{\varepsilon_0} \sum_{i=1}^m \ell^2_i\Var(f_i;\nu_\rho).
\]
\end{proposition}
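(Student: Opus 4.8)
The plan is to reduce the claim to the two main estimates already established in this section, namely the Kipnis--Varadhan inequality (Proposition \ref{p2}) and the near-orthogonality estimate for the $H_{-1}$-norm (Proposition \ref{p7}). The key observation is that the additive functional appearing on the left-hand side is the integral of a \emph{single} function, to which Kipnis--Varadhan applies directly; Proposition \ref{p7} then controls the resulting $H_{-1}$-norm by the sum of the individual variances, and the two estimates chain together.

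First I would set $f = f_1 + \cdots + f_m$ and observe that the additive functional in the statement is exactly $\int_0^t f(\eta_s)\,ds$. Before invoking Kipnis--Varadhan one must check that $\|f\|_{-1}$ is well defined, that is, that $\varphi_f(\beta)=0$ for every $\beta \in [0,1]$; this follows immediately from the linearity of the map $\beta \mapsto \int f\, d\nu_\beta$ together with the hypothesis $\varphi_{f_i}(\beta)=0$ for each $i$. Proposition \ref{p2} then gives
\[
\bb E_\rho\Big[\Big(\int_0^t f(\eta_s)\,ds\Big)^2\Big] \leq 18\, t\, \|f\|_{-1}^2.
\]

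It remains to bound $\|f\|_{-1}^2$. Since the $f_i$ have supports contained in the disjoint intervals $\{k_{i-1}+1,\dots,k_i\}$ and each satisfies $\varphi_{f_i}(\beta)=0$ for all $\beta$, the functions meet precisely the hypotheses of Proposition \ref{p7}, which yields
\[
\|f\|_{-1}^2 \leq \sum_{i=1}^m \frac{\kappa_0 \ell_i^2}{\varepsilon_0}\, \Var(f_i;\nu_\rho).
\]
Combining the two displays produces the asserted bound with constant $18\kappa_0/\varepsilon_0$. I do not expect any genuine obstacle here, since all the analytic content — the variance control via the spectral gap (Propositions \ref{p3}, \ref{p4}) and the block-orthogonality of the $H_{-1}$-norm (Proposition \ref{p7}) — has already been carried out. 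The only point requiring care is that the mean-zero conditions are imposed for \emph{every} $\beta \in [0,1]$ rather than merely at $\beta=\rho$; this is exactly what makes the $H_{-1}$-norm meaningful and what allows Proposition \ref{p7} to be invoked for $f$ and for each $f_i$.
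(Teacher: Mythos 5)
Your proposal is correct and is precisely the paper's argument: the paper states Proposition \ref{p8} as the result of ``putting all the estimates of this section together,'' meaning exactly your chaining of the Kipnis--Varadhan inequality (Proposition \ref{p2}) applied to $f = f_1 + \cdots + f_m$ with the block-orthogonality bound of Proposition \ref{p7}. Your additional check that $\varphi_f(\beta)=0$ for all $\beta$ by linearity is a sound (if routine) verification of the hypothesis needed for both propositions.
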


\section{Boltzmann-Gibbs principle}
\label{s3}
The Boltzmann-Gibbs principle was introduced in \cite{BR} and roughly speaking, it says that in the diffusive scaling introduced in Section \ref{s1.4}, any extensive field can be approximated by a function of the density fluctuation field. More precisely, we have the following proposition:

\begin{proposition}[\cite{D-MPSW, Cha2}]
  \label{p3.1}
Let $f: \Omega \to \bb R$ be a local function. For any $u \in \mc S(\bb R)$ and any $t \geq 0$
\[
\lim_{n \to \infty} \bb E_\rho\Big[\Big(\int_0^t \frac{1}{\sqrt n} \sum_{x \in \bb Z} \Big(\tau_x f(\eta_{sn^2}) -\varphi_f(\rho) -\varphi'_f(\rho)\big(\eta_{sn^2}(x)-\rho\big)\Big)u(x/n) ds\Big)^2\Big] =0.
\]
\end{proposition}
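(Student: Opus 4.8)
The plan is to reduce the statement to the two building blocks assembled in Section \ref{s2}: the dynamic variance bound of Proposition \ref{p8} for the high-frequency part of the field, and a purely static second-moment estimate for its low-frequency (block-average) part, the latter resting on the variance bound of Proposition \ref{p6}. First I would set $h := f - \varphi_f(\rho) - \varphi_f'(\rho)\big(\eta(0)-\rho\big)$, a local function with, by construction, $\varphi_h(\rho) = \varphi_h'(\rho) = 0$. Up to the harmless relabeling $x \mapsto -x$ of the summation index (which only replaces $u$ by $u(-\,\cdot\,) \in \mc S(\bb R)$, of the same $L^2$-norm), the field inside the expectation equals $\frac{1}{\sqrt n}\sum_{x \in \bb Z}\tau_x h(\eta_{sn^2})\, u(x/n)$, so it suffices to control
\[
 I_n := \bb E_\rho\Big[\Big(\int_0^t \tfrac{1}{\sqrt n}\sum_{x \in \bb Z}\tau_x h(\eta_{sn^2})\,u(x/n)\, ds\Big)^2\Big].
\]
I would then fix a scale $\ell = \ell(n) \to \infty$ with $\ell = o(n^{2/3})$ (say $\ell = \lfloor \sqrt n\rfloor$), let $\Lambda$ be an interval of length $\ell$ containing $\supp(h)$, and set $\psi_h(\ell) = E\big[h\,\big|\,\sum_{x \in \Lambda}\eta(x)\big]$ as in Section \ref{s2}. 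The core of the argument is the splitting $h = (h - \psi_h(\ell)) + \psi_h(\ell)$, which gives $I_n \leq 2 I_n^A + 2 I_n^B$ with the two parts built from $h-\psi_h(\ell)$ and $\psi_h(\ell)$ respectively.

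For the high-frequency part $I_n^A$ I would first record two facts. Since the conditional law of any product measure $\nu_\beta$ given the number of particles in $\Lambda$ does not depend on $\beta$, one has $\varphi_{h - \psi_h(\ell)}(\beta) = 0$ for \emph{every} $\beta \in [0,1]$; moreover $\Var(h - \psi_h(\ell);\nu_\rho) \leq \Var(h;\nu_\rho)$, a constant independent of $\ell$, and $h-\psi_h(\ell)$ is supported in $\Lambda$, of length $\ell$. I would then split the sum over $x$ into the $\ell$ residue classes modulo $\ell$; within a fixed class the translates of $h - \psi_h(\ell)$ have pairwise disjoint supports. After a Cauchy--Schwarz over the $\ell$ classes, the time-rescaling $\int_0^t(\cdot)(\eta_{sn^2})\,ds = n^{-2}\int_0^{tn^2}(\cdot)(\eta_s)\,ds$, and an application of Proposition \ref{p8} to each class (legitimate after truncating the rapidly decaying sum to a finite one), together with $\sum_x u(x/n)^2 \leq C n\|u\|^2$, I expect
\[
 I_n^A \;\leq\; \frac{C\, t\, \ell^3\, \|u\|^2}{n^2}\,\Var(h;\nu_\rho),
\]
which vanishes because $\ell^3 = o(n^2)$.

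The low-frequency part $I_n^B$ is the main obstacle: $\psi_h(\ell)$ is a function of the block density and $\varphi_{\psi_h(\ell)}(\beta) \neq 0$ for $\beta \neq \rho$, so Propositions \ref{p4} and \ref{p8} do \emph{not} apply and the dynamic machinery is unavailable. Instead I would use a static estimate: by Cauchy--Schwarz in time and stationarity, $I_n^B \leq t^2\, \bb E_\rho\big[(\tfrac{1}{\sqrt n}\sum_x \tau_x\psi_h(\ell)\,u(x/n))^2\big]$. Expanding the square into covariances and using that, under the product measure $\nu_\rho$, $\mathrm{Cov}(\tau_x\psi_h(\ell),\tau_y\psi_h(\ell))$ vanishes once $|x-y| \geq \ell$ and is otherwise bounded by $\Var(\psi_h(\ell);\nu_\rho)$, I would obtain
\[
 I_n^B \;\leq\; C\, t^2\, \ell\, \Var(\psi_h(\ell);\nu_\rho)\, \|u\|^2 \;\leq\; \frac{C\, t^2\,\|u\|^2}{\ell},
\]
the last step being exactly Proposition \ref{p6}(ii), valid because $\varphi_h(\rho) = \varphi_h'(\rho) = 0$. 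This is the whole point of subtracting the linear term $\varphi_f'(\rho)(\eta(x)-\rho)$ in the statement: without it one would have $\varphi_h'(\rho) \neq 0$ and only Proposition \ref{p6}(i), i.e.\ $\Var(\psi_h(\ell)) = O(\ell^{-1})$, yielding an $O(1)$ bound for $I_n^B$ that does not close. With the second-order cancellation in hand, $I_n^B = O(t^2/\ell) \to 0$. Collecting the two estimates and letting $n \to \infty$ along $1 \ll \ell \ll n^{2/3}$ gives $I_n \to 0$, which is the claim. I expect the delicate point to be precisely the interplay in the last paragraph, where the static bound must be used in place of the spectral-gap route, and where the exponent in the variance of $\psi_h(\ell)$ decides whether the argument closes.
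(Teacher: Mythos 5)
Your argument is correct, but note first that the paper does not actually prove Proposition \ref{p3.1}: it is quoted from \cite{D-MPSW} (an adaptation of the Brox--Rost argument \cite{BR}, requiring reversibility and translation invariance of $\nu_\rho$) and \cite{Cha2} (a simpler proof via the one-block estimate of \cite{GPV}), the paper remarking only that, since the functional is linear in the density field, the one-block estimate suffices. What you have written is therefore a self-contained quantitative derivation from the paper's own Section \ref{s2} toolkit, and it is essentially a single-scale specialization of the scheme the paper deploys for Theorems \ref{t6} and \ref{t7}: your high-frequency step (residue classes modulo $\ell$, disjoint supports, Proposition \ref{p8} applied classwise, giving $ct\ell^3\|u\|^2/n^2$ after the $n^{-5}$ prefactor from the time change and the $n^{-1/2}$ normalization) is the extensive-field analogue of Lemma \ref{one-block} and of the energy estimate \eqref{energy}, while your low-frequency step is a Lemma \ref{lema4}-type static equivalence-of-ensembles bound. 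The genuinely different feature is that you bypass the renormalization/two-blocks machinery entirely; this closes only because the statement already subtracts the linear term, so your $h$ satisfies $\varphi_h(\rho)=\varphi_h'(\rho)=0$ and Proposition \ref{p6}(ii) yields $\Var(\psi_h(\ell);\nu_\rho)\leq c\ell^{-2}$, whence $I_n^B=O(t^2/\ell)$ --- with only first-order cancellation the static term would be $O(t^2)$ and one would be forced into the multiscale argument of Theorem \ref{t7}(i); you correctly identify this as the crux. Three small points to tidy in a final write-up: with the paper's convention $\tau_x\eta(z)=\eta(z-x)$, $\tau_x h$ involves $\eta(-x)$ rather than $\eta(x)$, so the relabeling must be applied consistently to both terms of the field (harmless, since the sum runs over all of $\bb Z$ and $\|u(-\,\cdot\,)\|=\|u\|$); Proposition \ref{p6} is stated for supports in $\{1,\dots,\ell_0\}$, so shift $h$ by one site before invoking it; and the truncation of the sum over $x$ to finitely many blocks, needed to apply Proposition \ref{p8}, should be closed by a one-line static tail estimate using the rapid decay of $u\in\mc S(\bb R)$, uniformly in $n$.
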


Notice that the last term in the sum above is the field $\varphi_f'(\rho) \int_0^t \mc Y_s^n(u) ds$. This result was proved for the model introduced in Section \ref{s1.1} in \cite{D-MPSW}. Their proof is an adaptation of the proof in \cite{BR} and it requires the reversibility and the translation invariance of the measure $\nu_\rho$. In \cite{Cha2}, a simpler proof of Proposition \ref{p3.1} was obtained, building up in the {\em one-block estimate} introduced in \cite{GPV}. Following the methods in \cite{CLO}, the proof in \cite{Cha} can be adapted for the mean-zero exclusion process defined in Section \ref{s1.2}.

In this section we will state and prove two different generalizations of Proposition \ref{p3.1}. When $\varphi_f(\rho)= \varphi_f'(\rho)=0$, Proposition \ref{p3.1} does not give any useful information about the limiting behavior of the fluctuation field associated to the function $f$. In that case, we have the following result:

\begin{theorem}[Second-order Boltzmann-Gibbs principle]
\label{t6}
Let $f:\Omega \to \bb R$ be a local function. Assume that $\varphi_f(\rho)= \varphi_f'(\rho)=0$. There exists a constant $c=c(f,\rho)$ such that for any $t \geq 0$ , any $\ell \in \bb N$ and any function $h=\{h_x; x \in \bb Z\} \in \ell^2(\bb Z)$
\begin{equation}
\label{sobg}
\bb E_\rho\Big[\Big( \int_0^t \sum_{x \in \bb Z} \Big( \tau_x f(\eta_s) -\frac{\varphi_f''(\rho)}{2}\Big(\big(\tau_x\eta^\ell_s-\rho\big)^2-\frac{\chi(\rho)}{\ell}\Big)\Big)h_xds \Big)^2\Big] \leq c\Big(t\ell+\frac{t^2}{\ell^2}\Big)\sum_{x \in \bb Z} h_x^2.
\end{equation}
\end{theorem}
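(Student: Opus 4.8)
The plan is to interpolate between $\tau_x f$ and the target quadratic through the conditional expectation $\tau_x\psi_f(\ell)$, splitting the integrand as $\tau_x f - \tfrac{\varphi_f''(\rho)}{2}((\tau_x\eta_s^\ell-\rho)^2 - \tfrac{\chi(\rho)}{\ell}) = A_x + B_x$, where $A_x = \tau_x f - \tau_x\psi_f(\ell)$ is the \emph{dynamic} part and $B_x = \tau_x\psi_f(\ell) - \tfrac{\varphi_f''(\rho)}{2}((\tau_x\eta_s^\ell-\rho)^2-\tfrac{\chi(\rho)}{\ell})$ is the \emph{static} part. After Minkowski's inequality in $L^2(\bb P_\rho)$ it suffices to bound the two time integrals separately; I expect the dynamic part to produce the $t\ell$ term and the static part the $t^2/\ell^2$ term. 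The reason for this dichotomy is that $A_x$ has mean zero under every $\nu_\beta$ (since $\int\psi_f(\ell)\,d\nu_\beta = \varphi_f(\beta)$, the conditional expectation being $\rho$-independent), so it can be fed into the Kipnis--Varadhan inequality, whereas $B_x$ does not, forcing a cruder estimate.

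For the static part I would simply bound the time integral by Cauchy--Schwarz in time, $\bb E_\rho[(\int_0^t\sum_x B_x(\eta_s)h_x\,ds)^2] \le t^2\,\bb E_\rho[(\sum_x B_x h_x)^2]$, using stationarity. Since $B_x$ is supported on a box of size $O(\ell)$ and is centered under $\nu_\rho$, the covariances $\<B_x,B_y\>_\rho$ vanish once $|x-y|$ exceeds $O(\ell)$, while Proposition \ref{p6} gives $\|B_x\|_{L^2(\nu_\rho)}^2 \le c/\ell^3$. A Young-type count of the $O(\ell)$ nonzero covariances then yields $\bb E_\rho[(\sum_x B_x h_x)^2] \le (c/\ell^2)\sum_x h_x^2$, so the static contribution is at most $c\,t^2\ell^{-2}\sum_x h_x^2$.

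The heart of the argument is the dynamic part, where I would run the renormalization of \cite{GJ}. Writing $g = f - \psi_f(\ell)$, so that $\sum_x A_x h_x = \sum_x \tau_x g\, h_x$, I telescope over dyadic scales: with $2^{k_0}\ge \ell_0$ and $2^J\approx \ell$, one has $g = (f-\psi_f(2^{k_0})) + \sum_{k=k_0}^{J-1} g_k$, where $g_k = \psi_f(2^k) - \psi_f(2^{k+1})$ is supported on a box of size $2^{k+1}$ and satisfies $\varphi_{g_k}(\beta)=0$ for all $\beta$. For a single scale I estimate $\|\sum_x \tau_x g_k\,h_x\|_{-1}^2$ by splitting $\bb Z$ into the $2^{k+1}$ residue classes modulo $2^{k+1}$; within each class the translates of $g_k$ have pairwise disjoint supports, so Proposition \ref{p7} applies, and summing the $2^{k+1}$ classes via the triangle inequality for $\|\cdot\|_{-1}$ gives $\|\sum_x\tau_x g_k h_x\|_{-1}^2 \le c\,(2^{k+1})^3\,\Var(g_k;\nu_\rho)\sum_x h_x^2$. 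The decisive input is the second-order equivalence of ensembles: because $\varphi_f(\rho)=\varphi_f'(\rho)=0$, Proposition \ref{p6}(ii) gives $\Var(\psi_f(2^k);\nu_\rho)\le c\,2^{-2k}$, hence $\Var(g_k;\nu_\rho)\le c\,2^{-2k}$, and the per-scale bound collapses to $c\,2^{k}\sum_x h_x^2$. Summing $\|\cdot\|_{-1}$ over the $O(\log\ell)$ scales with the triangle inequality produces a geometric series $\sum_{k\le J} 2^{k/2}\approx \ell^{1/2}$, whence $\|\sum_x\tau_x g\,h_x\|_{-1}^2\le c\,\ell\sum_x h_x^2$ (the base term $f-\psi_f(2^{k_0})$ contributing a constant). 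Kipnis--Varadhan (Proposition \ref{p2}) then gives $\bb E_\rho[(\int_0^t\sum_x A_x(\eta_s)h_x\,ds)^2]\le 18\,t\,\|\sum_x\tau_x g\,h_x\|_{-1}^2\le c\,t\ell\sum_x h_x^2$; adding the two contributions closes the estimate, with non-dyadic $\ell$ absorbed by one extra truncation term $\psi_f(2^J)-\psi_f(\ell)$ treated exactly like a scale.

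The main obstacle is precisely this per-scale $H_{-1}$ estimate and its summation: a naive application of Proposition \ref{p4} to $g$ would give $\|g\|_{-1}^2\le c\ell^2\Var(g;\nu_\rho)$ and, after accounting for the $O(\ell)$ overlapping translates, a bound of order $t\ell^2$, off by the critical factor $\ell$. Recovering this factor hinges on two things working in tandem: the dyadic orthogonalization through Proposition \ref{p7}, which converts overlapping supports into disjoint ones at the cost of the number of residue classes, and the sharp decay $\Var(\psi_f(2^k);\nu_\rho)\le c\,2^{-2k}$ furnished by the vanishing of both $\varphi_f(\rho)$ and $\varphi_f'(\rho)$; with only $\varphi_f(\rho)=0$ one would get $2^{-k}$ and fall back to the useless $\ell^2$. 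Checking that the per-scale bounds sum to a convergent geometric series, rather than accumulating a logarithmic loss, is the delicate point the renormalization scheme is designed to handle.
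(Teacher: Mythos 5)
Your proposal is correct and follows essentially the same route as the paper, which defers the proof of Theorem \ref{t6} to \cite{GJ} but carries out the identical renormalization scheme for Theorem \ref{t7}: a one-block step and dyadic telescoping of $\psi_f(2^k)-\psi_f(2^{k+1})$ fed into Kipnis--Varadhan via the spectral gap (your residue-class disjointification is exactly the mechanism behind Propositions \ref{p7} and \ref{p8}, extended to the weights $h_x$ as in Proposition \ref{p9}), with the decisive variance decay $\Var(\psi_f(2^k);\nu_\rho)\leq c\,2^{-2k}$ from Proposition \ref{p6}(ii) producing the $t\ell$ term, and a Cauchy--Schwarz-in-time estimate on the equivalence-of-ensembles remainder producing the $t^2/\ell^2$ term. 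Your per-scale bookkeeping ($c\,2^{3k}\Var(g_k)$ per scale, geometric summation in the $\|\cdot\|_{-1}$ triangle inequality, crude handling of non-dyadic $\ell$) checks out and matches the cited argument.
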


Notice that in this theorem a sort of ``quadratic'' field appears. In Section \ref{s4.1} we will see how this estimate implies Theorem \ref{t5}. The second generalization of Proposition \ref{p3.1} deals with the case in which there is no spatial average in the additive functional associated to $f$:

\begin{theorem}[Local Boltzmann-Gibbs principle]
\label{t7}
Let $f: \Omega \to \bb R$ be a local function. Assume that $\varphi_f(\rho)=0$. There exists a finite constant $c=c(f,\rho)$ such that

\begin{itemize}
\item[i)] if $\varphi_f'(\rho) \neq 0$, then
\begin{equation}
 \label{lbg1}
\bb E_\rho \Big[ \Big( \int_0^t \Big\{ f(\eta_s) - \varphi_f'(\rho) \big(\eta_s^\ell -\rho\big)\Big\}ds\Big)^2\Big] \leq c\Big(t\ell
+\frac{t^2}{\ell^2}\Big)
\end{equation}
for any $t \geq 0$ and any $\ell \in \bb N$,
\item[ii)] if $\varphi_f'(\rho)=0$, then
\begin{equation}
\label{lbg2}
\bb E_\rho\Big[\Big(\int_0^t\Big\{ f(\eta_s)-\frac{\varphi_f''(\rho)}{2} \Big(\big(\eta_s^\ell -\rho\big)^2-\frac{\chi(\rho)}{\ell}\Big)\Big\} ds\Big)^2\Big] \leq c \Big(t(\log \ell)^2 +\frac{t^2}{\ell^3}\Big).
\end{equation}
for any $t \geq 0$ and any $\ell \in \bb N$.
\end{itemize}
\end{theorem}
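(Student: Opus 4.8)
The plan is to split the error in \eqref{lbg1}--\eqref{lbg2} into a \emph{dynamic} replacement, in which $f$ is replaced by its conditional expectation $\psi_f(\ell)$ on the box $\{1,\dots,\ell\}$, and a \emph{static} replacement, in which $\psi_f(\ell)$ is replaced by the polynomial in $\eta^\ell$ dictated by the equivalence of ensembles. The crucial point is that a direct application of Proposition \ref{p2} to $f-\psi_f(\ell)$ only yields a bound of order $t\ell^2$, since Proposition \ref{p4} gives merely $\|f-\psi_f(\ell)\|_{-1}^2\le c\ell^2$; this is too crude. Following the renormalization scheme of \cite{GJ}, I would instead reach the scale $\ell$ through a cascade of doublings, estimating the $H_{-1}$-norm scale by scale, where the cancellations produced by the equivalence of ensembles make each increment far smaller than the naive bound suggests.

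For the static part, let $R_\ell$ denote the difference between $\psi_f(\ell)$ and the polynomial appearing inside the time integral (namely $\varphi_f'(\rho)(\eta^\ell-\rho)$ in case (i), and $\tfrac{\varphi_f''(\rho)}{2}((\eta^\ell-\rho)^2-\chi(\rho)/\ell)$ in case (ii)). By Proposition \ref{p6} together with the elementary bound $\|(\eta^\ell-\rho)^2-\chi(\rho)/\ell\|_{L^2(\nu_\rho)}\le c/\ell$, one gets $\|R_\ell\|_{L^2(\nu_\rho)}\le c/\ell$ in case (i) and $\|R_\ell\|_{L^2(\nu_\rho)}\le c/\ell^{3/2}$ in case (ii). Since this is a time-independent pointwise substitution, Cauchy--Schwarz in time gives $\bb E_\rho[(\int_0^t R_\ell(\eta_s)\,ds)^2]\le t^2\|R_\ell\|_{L^2(\nu_\rho)}^2$, which produces exactly the $t^2/\ell^2$ and $t^2/\ell^3$ terms.

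For the dynamic part, I would first assume $\ell=2^K\ell_0$ and telescope
\[
 f-\psi_f(\ell) = \big(f-\psi_f(\ell_0)\big) + \sum_{j=0}^{K-1}\big(\psi_f(2^j\ell_0)-\psi_f(2^{j+1}\ell_0)\big).
\]
Each increment is measurable with respect to the box $\{1,\dots,2^{j+1}\ell_0\}$ and satisfies $\varphi\equiv 0$, so Proposition \ref{p4} bounds its squared $H_{-1}$-norm by $c\,(2^{j+1}\ell_0)^2\,\Var(\psi_f(2^j\ell_0)-\psi_f(2^{j+1}\ell_0);\nu_\rho)$. The key input is the variance of these increments, which I read off from Proposition \ref{p6}: writing $\ell'=2^j\ell_0$, in case (i) the surviving term is the linear one and, since $\Var(\eta^{\ell'}-\eta^{2\ell'};\nu_\rho)\sim\chi(\rho)/(2\ell')$, the increment has variance of order $1/\ell'$, so its squared $H_{-1}$-norm is of order $\ell'$; in case (ii) the linear term vanishes and the leading contribution is quadratic, of variance of order $1/(\ell')^2$, so its squared $H_{-1}$-norm is of order a constant. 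Using that $\|\cdot\|_{-1}$ obeys the triangle inequality (being the dual norm of $g\mapsto\<g,-Lg\>_\rho^{1/2}$) and then Proposition \ref{p2}, I combine the scales: in case (i) the $H_{-1}$-norms $\sim(\ell')^{1/2}$ form a geometric series dominated by the largest scale, so $\|f-\psi_f(\ell)\|_{-1}^2\le c\ell$ and Proposition \ref{p2} yields the $t\ell$ term; in case (ii) the $K\sim\log_2\ell$ scales have comparable $H_{-1}$-norms, the triangle inequality costs a factor $K$, so $\|f-\psi_f(\ell)\|_{-1}^2\le c(\log\ell)^2$ and Proposition \ref{p2} yields $t(\log\ell)^2$. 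The base term $f-\psi_f(\ell_0)$ contributes only $c\,t$ and is absorbed.

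The main obstacle is the sharp, scale-dependent variance bookkeeping: one must extract from the second-order equivalence of ensembles (Proposition \ref{p6}) not merely that the increments are small, but the exact rates $1/\ell'$ versus $1/(\ell')^2$, because it is precisely the gain of one power of $\ell'$ over the naive $\Var=O(1)$ estimate that converts the unusable $t\ell^2$ into the optimal $t\ell$, and a second gain that produces the critical logarithm in case (ii). Finally, for general $\ell\in\bb N$ I would reduce to the dyadic case by comparing $\eta^\ell$ with $\eta^{\ell'}$ at the nearest dyadic scale $\ell'$, the discrepancy being controlled by the same variance estimates and absorbed into the stated constants.
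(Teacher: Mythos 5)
Your proposal is correct and follows essentially the same route as the paper's proof: a one-block estimate at the base scale $\ell_0$, a dyadic renormalization whose variance input comes from the second-order equivalence of ensembles (Proposition \ref{p6}) combined with the spectral gap bound on the $H_{-1}$-norm (Proposition \ref{p4}), and a static Cauchy--Schwarz replacement of $\psi_f(\ell)$ by the polynomial in $\eta^\ell$, which is exactly what produces the $t^2/\ell^2$ and $t^2/\ell^3$ terms (the paper's Lemma \ref{lema4}). The only, purely cosmetic, difference is the order of operations: you sum the dyadic increments in the $H_{-1}$-norm via its triangle inequality and then apply Kipnis--Varadhan once, whereas the paper applies Kipnis--Varadhan at each scale (through Proposition \ref{p8}) and sums in $L^2(\bb P_\rho)$ by Minkowski's inequality --- the two orderings give identical bounds, including the geometric series dominated by the top scale in case (i) and the $(\log\ell)^2$ from the $\sim\log_2\ell$ comparable scales in case (ii).
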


This estimate is sharp in the case i) and it is close to the expected upper bound $c t \log \ell $ in case ii). The main point that makes both Theorems \ref{t6} and \ref{t7} different from Proposition \ref{p3.1} is the following. The functional $\varphi_f(\rho) \int_0^t \mc Y_s^n(u) ds$ is {\em continuous and linear} as a function of the process$\{\mc Y_t; t \in [0,T]\}$. Therefore, aside from technical details, in order to prove Proposition \ref{p3.1} it is enough to prove the one-block estimate. This is not longer enough for Theorems \ref{t6} and \ref{t7}, where we also need to prove the so-called {\em two-blocks estimate}. In a similar context, a proof of the two-blocks estimate and also the derivation of Theorem \ref{t6} was recently obtained in \cite{GJ}. In Section \ref{s3.1} we adapt the arguments in \cite{GJ} in order to prove Theorem \ref{t7}. We will not prove Theorem \ref{t6} here, since the proof in \cite{GJ} applies to our situation with basically notational modifications.

\subsection{Local Boltzmann-Gibbs principle}
\label{s3.1}

In this section we prove Theorem \ref{t7}. For notational simplicity we assume that there exists $\ell_0$ such that $\supp(f) \subseteq \{1,...,\ell_0\}$. We point out that the proof will only use Propositions \ref{p6} and \ref{p8} as inputs. Otherwise the proof is independent of the results in Section \ref{s2}. The proof follows like in Section 4 of \cite{GJ}: we have to divide it into four steps. The first part is what we call the {\em one-block estimate} and it compares the additive functional associated to $f$ with the additive functional associated to $\psi_f(\ell)$ for any $\ell \geq \ell_0$. The second part is the renormalization step, which compares the additive functional associated to $\psi_f(\ell)$ with the additive functional associated to $\psi_f(2\ell)$ for any $\ell \geq \ell_0$. The third step is what we call the {\em two-blocks estimate}, which compares the additive functional associated to $\psi_f(\ell_0)$ with the additive functional associated to $\psi_f(2^m \ell_0)$, using the renormalization step $m$ times. And the fourth step simply replaces $\psi_f(\ell)$ by the corresponding function of $\eta^\ell$ using the equivalence of ensembles stated in Proposition \ref{p6}.

\begin{lemma}[One-block estimate]
\label{one-block}
Let $f$ be a local function such that $\varphi_f(\rho)=0$. Then, there exists a constant $c=c(f,\rho)$ such that for any $\ell \geq \ell_0$ and any $t \geq 0$
\begin{equation}
\label{ec5}
\bb E_\rho\Big[\Big(\int_0^t\{ f(\eta_s)-\psi_f(\ell;\eta_s)\} ds \Big)^2\Big] \leq c t \ell^2 \Var(f;\nu_\rho).
\end{equation}
\end{lemma}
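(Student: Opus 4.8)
The goal is to bound the time-integrated variance of the difference $f - \psi_f(\ell)$. The natural tool is the Kipnis-Varadhan inequality (Proposition \ref{p2}), which reduces the problem to estimating the $H_{-1}$-norm of the local function $g := f - \psi_f(\ell)$. The first thing I would check is that $g$ satisfies the hypotheses needed to apply Proposition \ref{p4}: since $\psi_f(\ell) = E[f \mid \sum_{x=1}^\ell \eta(x)]$ is a conditional expectation of $f$ given the total number of particles in $\{1,\dots,\ell\}$, we have $\varphi_{\psi_f(\ell)}(\beta) = \varphi_f(\beta)$ for every $\beta \in [0,1]$, so $\varphi_g(\beta) = 0$ for all $\beta$. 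Moreover, since $\supp(f) \subseteq \{1,\dots,\ell_0\} \subseteq \{1,\dots,\ell\}$ and $\psi_f(\ell)$ depends only on $\sum_{x=1}^\ell \eta(x)$, the function $g$ is supported on $\{1,\dots,\ell\}$. Thus $g$ meets exactly the hypothesis of Proposition \ref{p3}, hence of Proposition \ref{p4}.

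\textbf{Main chain of inequalities.} Applying Proposition \ref{p2} to the function $g$ gives
\[
\bb E_\rho\Big[\Big(\int_0^t \{f(\eta_s) - \psi_f(\ell;\eta_s)\}\,ds\Big)^2\Big] \leq 18\, t\, \|f - \psi_f(\ell)\|_{-1}^2.
\]
Then Proposition \ref{p4}, applied with the support interval $\{1,\dots,\ell\}$, yields
\[
\|f - \psi_f(\ell)\|_{-1}^2 \leq \frac{\kappa_0}{\varepsilon_0}\,\ell^2\,\Var(f - \psi_f(\ell); \nu_\rho).
\]
So it remains to bound the variance of $g = f - \psi_f(\ell)$ by a constant multiple of $\Var(f;\nu_\rho)$. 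This is immediate from the tower property: $\psi_f(\ell)$ is the conditional expectation of $f$ with respect to the $\sigma$-algebra generated by $\sum_{x=1}^\ell \eta(x)$, so $f - \psi_f(\ell)$ is the orthogonal complement of the projection, and by Pythagoras $\Var(f - \psi_f(\ell);\nu_\rho) = \Var(f;\nu_\rho) - \Var(\psi_f(\ell);\nu_\rho) \leq \Var(f;\nu_\rho)$. Combining the three displays gives the claimed bound $c\, t\, \ell^2\, \Var(f;\nu_\rho)$ with $c = 18\kappa_0/\varepsilon_0$.

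\textbf{Where the subtlety lies.} The computation above is short; the only genuine point to verify carefully is that $g = f - \psi_f(\ell)$ satisfies $\varphi_g(\beta) = 0$ for \emph{every} $\beta \in [0,1]$, not merely $\beta = \rho$. This is what Propositions \ref{p3} and \ref{p4} require, and it is the reason one uses the full conditional expectation $\psi_f(\ell)$ (given the particle number) rather than a cruder centering. The identity $\varphi_{\psi_f(\ell)}(\beta) = \varphi_f(\beta)$ follows because under each $\nu_\beta$ the conditional law of the configuration given $\sum_{x=1}^\ell \eta(x)$ is the same (uniform over configurations with the prescribed number of particles, independent of $\beta$), so integrating $\psi_f(\ell)$ against $\nu_\beta$ reproduces the integral of $f$. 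Everything else—the Kipnis-Varadhan step, the spectral-gap estimate, and the Pythagorean bound on the variance—is a direct invocation of results already established in Section \ref{s2}. I expect no serious obstacle beyond making this $\sigma$-algebra bookkeeping precise, and the hint in the statement (that only Propositions \ref{p6} and \ref{p8} are needed as inputs for the whole of Theorem \ref{t7}) confirms that this one-block step is meant to be essentially a two-line application of the spectral tools.
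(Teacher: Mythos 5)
Your proof is correct and follows essentially the same route as the paper: the paper simply invokes Proposition \ref{p8} with $m=1$ (which packages the Kipnis--Varadhan inequality of Proposition \ref{p2} together with the spectral-gap bound of Proposition \ref{p4}, the two steps you carry out explicitly) and then bounds $\Var(f-\psi_f(\ell);\nu_\rho)\leq \Var(f;\nu_\rho)$ exactly as you do, since $\psi_f(\ell)$ is a conditional expectation. Your careful verification that $\varphi_{f-\psi_f(\ell)}(\beta)=0$ for \emph{every} $\beta\in[0,1]$ is indeed the hypothesis implicitly used when the paper applies Proposition \ref{p8}, so nothing is missing.
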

\begin{proof}
By Proposition \ref{p8}, the left-hand side of \eqref{ec5} is bounded by
\[
\frac{18 \kappa_0 t \ell^2}{\varepsilon_0}\Var(f-\psi_f(\ell);\nu_\rho).
\]
Since $\psi_f(\ell)$ is a conditional expectation, $\Var(f-\psi_f(\ell);\nu_\rho)$ is bounded above by $\Var(f;\nu_\rho)$, which proves the lemma.
\end{proof}

\begin{lemma}[Renormalization step]
\label{renormalization}
Let $f$ be a local function such that $\varphi_f(\rho)=0$. There exists a constant $c=c(f,\rho)$ such that for any $\ell \geq \ell_0$ and any $t \geq 0$
\begin{equation}
\label{ec6}
\bb E_\rho\Big[\Big(\int_0^t\{ \psi_f(\ell;\eta_s)-\psi_f(2\ell;\eta_s)\} ds \Big)^2\Big] \leq
\begin{cases}
c t \ell& \text{ if } \varphi_f'(\rho) \neq 0,\\
c t & \text{ if } \varphi_f'(\rho) =0.
\end{cases}
\end{equation}
\end{lemma}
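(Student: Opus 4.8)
The plan is to estimate the single function $g := \psi_f(\ell) - \psi_f(2\ell)$ by a direct application of Proposition \ref{p8}, using Proposition \ref{p6} only to control its variance; the dyadic ``renormalization'' flavor enters not in this lemma but in the iteration carried out afterwards in the two-blocks estimate.

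First I would record two structural facts about $g$. The function $\psi_f(\ell)$ depends on $\eta$ only through the density $\eta^\ell$ on $\{1,\dots,\ell\}$, and $\psi_f(2\ell)$ only through $\eta^{2\ell}$ on $\{1,\dots,2\ell\}$; hence $g$ is supported on $\{1,\dots,2\ell\}$. Moreover, since conditioning a product Bernoulli measure on the number of particles in a box yields the (density-independent) uniform measure, the canonical expectation $\psi_f(\ell)$ is the same function of $\eta^\ell$ for every density, so that $\int \psi_f(\ell)\, d\nu_\beta = \varphi_f(\beta)$ for every $\beta \in [0,1]$; the same identity holds for $\psi_f(2\ell)$. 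Consequently $\varphi_g(\beta) = \varphi_f(\beta) - \varphi_f(\beta) = 0$ for all $\beta$, so $g$ meets the hypotheses of Proposition \ref{p8}.

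Applying Proposition \ref{p8} with $m=1$, $k_0 = 0$, $k_1 = 2\ell$ (so that $\ell_1 = 2\ell$) gives
\[
\bb E_\rho\Big[\Big(\int_0^t g(\eta_s)\, ds\Big)^2\Big] \leq \frac{72 \kappa_0}{\varepsilon_0}\, t\, \ell^2\, \Var(g;\nu_\rho).
\]
It remains to bound $\Var(g;\nu_\rho)$, and this is where the two cases split. By the triangle inequality in $L^2(\nu_\rho)$ we have $\Var(g;\nu_\rho)^{1/2} \leq \Var(\psi_f(\ell);\nu_\rho)^{1/2} + \Var(\psi_f(2\ell);\nu_\rho)^{1/2}$. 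If $\varphi_f'(\rho) \neq 0$, part (i) of Proposition \ref{p6} yields $\Var(\psi_f(\ell);\nu_\rho) \leq c/\ell$, whence $\Var(g;\nu_\rho) \leq c/\ell$ and the right-hand side above is bounded by $c\, t\, \ell$. If instead $\varphi_f'(\rho) = 0$, part (ii) of Proposition \ref{p6} gives the sharper bound $\Var(\psi_f(\ell);\nu_\rho) \leq c/\ell^2$, whence $\Var(g;\nu_\rho) \leq c/\ell^2$ and the estimate improves to $c\, t$. This is precisely \eqref{ec6}.

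The only genuinely delicate point is the centering identity $\int \psi_f(\ell)\, d\nu_\beta = \varphi_f(\beta)$ for all $\beta$: it is what allows us to keep the conditional expectations $\psi_f$ intact (rather than their equivalence-of-ensembles polynomial approximations) and thereby produce a function orthogonal to all densities. Working instead with the polynomial $\tfrac{1}{2}\varphi_f''(\rho)\big((\eta^\ell - \rho)^2 - \chi(\rho)/\ell\big)$ would destroy this exact cancellation, since $(\eta^\ell - \rho)^2$ has a genuinely $\beta$-dependent mean under $\nu_\beta$, and one would be forced into the cross-term and two-halves decomposition used in \cite{GJ}. Using $\psi_f$ directly sidesteps that subtlety entirely; the polynomial replacement, which produces the additional $t^2/\ell^2$ and $t^2/\ell^3$ errors appearing in Theorem \ref{t7}, is deferred to the final step of the argument.
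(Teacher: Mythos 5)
Your proof is correct and follows essentially the same route as the paper's: a single application of Proposition \ref{p8} with one block of length $2\ell$, followed by the variance bounds of Proposition \ref{p6} applied to $\psi_f(\ell)-\psi_f(2\ell)$, split according to whether $\varphi_f'(\rho)$ vanishes. Your explicit verification that $\int \psi_f(\ell)\,d\nu_\beta = \varphi_f(\beta)$ for every $\beta\in[0,1]$ (so that the difference satisfies the hypothesis $\varphi_g(\beta)\equiv 0$ of Proposition \ref{p8}) makes precise a point the paper leaves implicit.
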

\begin{proof}
By Proposition \ref{p8}, the left-hand side of \eqref{ec6} is bounded by
\[
\frac{72 \kappa_0 t \ell^2}{\varepsilon_0}\Var(\psi_f(\ell)-\psi_f(2\ell);\nu_\rho).
\]
By Proposition \ref{p6}, this last variance is bounded by $3c/\ell$ if $\varphi_f(\rho)=0$ and by $3c/\ell^2$ if in addition $\varphi_f'(\rho)=0$, which proves the lemma.
\end{proof}

\begin{lemma}[Two-blocks estimate]
\label{two-blocks}
Let $f$ be a local function  such that $\varphi_f(\rho)=0$. Then, there exists a constant $c = c(f,\rho)$ such that for any $\ell \geq  \ell_0$ and any $t \geq 0$,
\[
\bb E_\rho\Big[\Big(\int_0^t\{ \psi_f(\ell_0;\eta_s)-\psi_f(\ell;\eta_s)\} ds \Big)^2\Big] \leq
\begin{cases}
c t \ell & \text{ if } \varphi_f'(\rho) \neq 0,\\
c t (\log \ell)^2 & \text{ if } \varphi_f'(\rho) =0.
\end{cases}
\]
\end{lemma}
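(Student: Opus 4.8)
The plan is to iterate the renormalization step (Lemma \ref{renormalization}) along a dyadic sequence of scales and to assemble the resulting estimates by means of Minkowski's inequality in $L^2(\bb P_\rho)$, so that no control of the cross-correlations between different scales is ever required. Throughout I write $\|G\|_2 = \bb E_\rho[G^2]^{1/2}$ for a trajectory functional $G$.

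First I would treat the case $\ell = 2^m \ell_0$, $m \in \bb N$, and telescope
\[
\psi_f(\ell_0;\eta_s) - \psi_f(\ell;\eta_s) = \sum_{j=0}^{m-1} \big\{\psi_f(2^j\ell_0;\eta_s) - \psi_f(2^{j+1}\ell_0;\eta_s)\big\}.
\]
Applying Minkowski's inequality to the time integral of this sum gives
\[
\Big\| \int_0^t \{\psi_f(\ell_0;\eta_s) - \psi_f(\ell;\eta_s)\} ds \Big\|_2 \leq \sum_{j=0}^{m-1} \Big\| \int_0^t \{\psi_f(2^j\ell_0;\eta_s) - \psi_f(2^{j+1}\ell_0;\eta_s)\} ds \Big\|_2,
\]
and I would bound each summand by Lemma \ref{renormalization} applied at scale $2^j\ell_0$. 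When $\varphi_f'(\rho)\neq 0$ the $j$-th term is at most $(ct\,2^j\ell_0)^{1/2}$, so the right-hand side is at most $(ct\ell_0)^{1/2}\sum_{j=0}^{m-1}2^{j/2}$; this geometric series is dominated by a constant times its last term $2^{(m-1)/2}$, and since $2^m\ell_0=\ell$ the whole bound is of order $(ct\ell)^{1/2}$, which squares to $ct\ell$. When $\varphi_f'(\rho)=0$ each term is at most $(ct)^{1/2}$, so the sum is at most $m(ct)^{1/2}$; since $m=\log_2(\ell/\ell_0)$, squaring yields $ctm^2\le ct(\log\ell)^2$ after adjusting the constant.

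To remove the restriction $\ell=2^m\ell_0$, for general $\ell\ge\ell_0$ I would pick the integer $m$ with $2^{m-1}\ell_0<\ell\le 2^m\ell_0$, apply the estimate above to the pair $(\ell_0,2^m\ell_0)$, and add the single comparison of $\psi_f(\ell)$ with $\psi_f(2^m\ell_0)$. Since these two scales differ by at most a factor $2$, Propositions \ref{p8} and \ref{p6} bound this last contribution exactly as in Lemma \ref{renormalization}, with the same orders $ct\ell$ and $ct$ respectively, and the triangle inequality delivers the claim for every $\ell\ge\ell_0$.

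The main point to get right is the decision to use Minkowski's inequality rather than a direct expansion of the second moment of the telescoped sum: summing the $L^2$-norms sidesteps the temporal cross-correlations between distinct scales, which are not obviously small and would otherwise be the real obstacle. This robustness comes at a cost that explains the shape of the statement. In the case $\varphi_f'(\rho)=0$ the $m$ equal contributions $(ct)^{1/2}$ add up linearly, producing $(\log\ell)^2$ after squaring instead of the $\log\ell$ that genuine near-orthogonality across scales would yield; this is precisely the gap between the present bound and the conjectured sharp bound $ct\log\ell$ recorded after Theorem \ref{t7}. In the case $\varphi_f'(\rho)\neq 0$, by contrast, the geometric growth of the per-scale bounds makes the sum dominated by its finest scale, so no such loss occurs and the resulting $ct\ell$ is sharp.
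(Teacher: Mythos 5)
Your proposal is correct and follows essentially the same route as the paper: the identical dyadic telescoping, Minkowski's inequality in $L^2(\bb P_\rho)$ to avoid cross-scale correlations, and Lemma \ref{renormalization} at each scale, with the geometric series dominated by its coarsest term when $\varphi_f'(\rho)\neq 0$ and the $m$ equal terms giving $(\log \ell)^2$ when $\varphi_f'(\rho)=0$. Your treatment of non-dyadic $\ell$ via one extra comparison between scales differing by at most a factor of $2$ is also exactly the paper's closing remark.
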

\begin{proof}
Let us assume that there exists $m \in \bb N$ such that $ \ell = 2^m \ell_0$. We write
\[
\psi_f(\ell_0;\eta_s) - \psi_f(\ell;\eta_s) = \sum_{i=1}^m \big(\psi_f(2^{i-1} \ell_0;\eta_s) - \psi_f(2^{i}\ell_0;\eta_s)\big).
\]
Using the renormalization step (Lemma \ref{renormalization}) and Minkowski's inequality, we conclude that
\[
\begin{split}
\bb E_\rho\Big[\Big(\int_0^t\{ \psi_f(\ell_0;\eta_s)
  &-\psi_f(\ell;\eta_s)\} ds \Big)^2\Big]^{1/2} \leq \\
  & \leq \sum_{i=1}^m \bb E_\rho\Big[\Big(\int_0^t\{ \psi_f(2^{i-1} \ell_0;\eta_s)-\psi_f(2^{i}\ell_0;\eta_s)\} ds \Big)^2\Big]^{1/2}\\
  &\leq \sum_{i=1}^m \big\{c t (2^{i-1} \ell_0)^\alpha\big\}^{1/2},
\end{split}
\]
where $\alpha =1$ if $\varphi_f'(\rho) \neq 0$ and $\alpha=0$ if $\varphi_f'(\rho)=0$. When $\alpha=1$ the sum above is equal to $\sqrt{ct \ell_0} (\sqrt{2}^m-1)/(\sqrt 2-1)$. When $\alpha =0$, the sum above is equal to $\sqrt{ct} m$. These two facts  prove the theorem for $\ell = 2^m \ell_0$. For $\ell$ arbitrary, it is enough to find $m$ such that $2^{m-1}\ell_0 <\ell \leq 2^m \ell_0$ and to compare $\psi_f(2^{m-1}\ell_0)$ with $\psi_f(\ell)$ as in Lemma \ref{renormalization}.
\end{proof}

The following lemma is just a reformulation of Proposition \ref{p6} together with the Cauchy-Schwarz inequality.
\begin{lemma}
\label{lema4}
Let $f$ be a local function such that $\varphi_f(\rho)=0$. Then there exists a constant $c=c(f,\rho)$ such that for any $\ell \geq \ell_0$ and any $t \geq 0$
\begin{itemize}
\item[i)] if $\varphi_f'(\rho) \neq 0$, then
\[
\bb E_\rho\Big[\Big(\int_0^t\{ \psi_f(\ell;\eta_s) -\varphi_f'(\rho)\big(\eta_s^\ell-\rho\big)\} ds \Big)^2\Big] \leq \frac{ct^2}{\ell^2},
\]
\item[ii)] if $\varphi_f'(\rho)=0$, then
\[
\bb E_\rho\Big[\Big(\int_0^t\Big\{ \psi_f(\ell;\eta_s) - \frac{\varphi_f''(\rho)}{2}\Big(\big(\eta_s^\ell-\rho\big)^2-\frac{\chi(\rho)}{\ell}\Big)\Big\} ds \Big)^2\Big] \leq \frac{ct^2}{\ell^3}.
\]
\end{itemize}
\end{lemma}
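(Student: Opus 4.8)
The plan is to exploit the fact that in both estimates the time-integrand is a \emph{fixed} function of the configuration at a single time, so that the whole statement reduces to a static $L^2(\nu_\rho)$ computation after an elementary Cauchy--Schwarz step in time. Concretely, for any function $g:\Omega \to \bb R$, the Cauchy--Schwarz inequality gives
\[
\Big(\int_0^t g(\eta_s)\,ds\Big)^2 \leq t \int_0^t g(\eta_s)^2\,ds .
\]
Taking expectations and using that $\bb P_\rho$ is stationary, so that $\bb E_\rho[g(\eta_s)^2] = \int g^2\,d\nu_\rho$ for every $s$, I obtain
\[
\bb E_\rho\Big[\Big(\int_0^t g(\eta_s)\,ds\Big)^2\Big] \leq t^2 \int g^2\,d\nu_\rho .
\]
Thus in each case it suffices to bound the $L^2(\nu_\rho)$-norm of the corresponding integrand $g$ by the appropriate power of $\ell$.

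For case ii), where $\varphi_f(\rho)=\varphi_f'(\rho)=0$, the integrand is exactly
\[
g(\eta)=\psi_f(\ell;\eta)-\frac{\varphi_f''(\rho)}{2}\Big(\big(\eta^\ell-\rho\big)^2-\frac{\chi(\rho)}{\ell}\Big),
\]
which coincides with the expression whose square integral is controlled by Proposition \ref{p6}, since the linear and constant terms there vanish. Hence $\int g^2\,d\nu_\rho \leq c/\ell^3$, and multiplying by $t^2$ yields the claimed bound $ct^2/\ell^3$.

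For case i), where $\varphi_f(\rho)=0$ but $\varphi_f'(\rho)\neq 0$, the integrand $g(\eta)=\psi_f(\ell;\eta)-\varphi_f'(\rho)\big(\eta^\ell-\rho\big)$ differs from the Proposition \ref{p6} expression precisely by the quadratic remainder $\tfrac{1}{2}\varphi_f''(\rho)\big((\eta^\ell-\rho)^2-\chi(\rho)/\ell\big)$. I would therefore split $g$ into this remainder plus the centered combination controlled by Proposition \ref{p6}, and estimate each in $L^2(\nu_\rho)$ via the triangle inequality. The first piece has norm at most $\sqrt{c/\ell^3}=O(\ell^{-3/2})$. The only point requiring a short computation is the second piece: writing $\eta^\ell-\rho=\ell^{-1}\sum_{x=1}^\ell(\eta(x)-\rho)$ as a normalized sum of i.i.d.\ centered bounded variables, one computes $\int\big((\eta^\ell-\rho)^2-\chi(\rho)/\ell\big)^2 d\nu_\rho=\Var\big((\eta^\ell-\rho)^2;\nu_\rho\big)=O(\ell^{-2})$, so this term has $L^2$-norm $O(\ell^{-1})$ and dominates. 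Collecting the two bounds gives $\int g^2\,d\nu_\rho \leq c/\ell^2$, hence the bound $ct^2/\ell^2$. The main, and only mildly technical, obstacle is this fourth-moment estimate for $\eta^\ell$; everything else is a direct consequence of Proposition \ref{p6} and stationarity.
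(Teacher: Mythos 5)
Your proof is correct and takes essentially the same route as the paper, whose entire argument is the remark that the lemma is ``just a reformulation of Proposition \ref{p6} together with the Cauchy--Schwarz inequality'' --- exactly your reduction $\bb E_\rho\big[\big(\int_0^t g(\eta_s)\,ds\big)^2\big] \leq t^2 \int g^2\,d\nu_\rho$ by stationarity, followed by the static $L^2(\nu_\rho)$ bounds from Proposition \ref{p6}. The one detail the paper leaves implicit, the fourth-moment computation giving $\Var\big((\eta^\ell-\rho)^2;\nu_\rho\big) = O(\ell^{-2})$ for the dominant quadratic term in case i), you carry out correctly (note that $(\eta^\ell-\rho)^2-\chi(\rho)/\ell$ is exactly centered, so its second moment is precisely this variance).
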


Theorem \ref{t7} follows at once by combining these four lemmas. Notice that the use of Lemmas \ref{one-block}, \ref{renormalization}, \ref{two-blocks} gives rise to the first term at the right-hand side of inequalities \eqref{lbg1},\eqref{lbg2} and the use of Lemma \ref{lema4} gives rise to the second term. We state this observations as a corollary:

\begin{corollary}
 \label{cor1}
Under the hypothesis of Theorem \ref{t7},
\begin{equation}
 \label{est1}
\bb E_\rho\Big[\Big(\int_0^t\{ f(\eta_s)-\psi_f(\ell;\eta_s)\} ds \Big)^2\Big] \leq
\begin{cases}
c t \ell& \text{ if } \varphi_f'(\rho) \neq 0,\\
c t (\log \ell)^2& \text{ if } \varphi_f'(\rho) =0
\end{cases}
\end{equation}
for any $t \geq 0$ and any $\ell \geq \ell_0$.
\end{corollary}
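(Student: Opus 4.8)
The plan is to obtain Corollary \ref{cor1} directly from Lemmas \ref{one-block} and \ref{two-blocks}, using the triangle inequality in $L^2(\bb P_\rho)$; no further input is required, since the final replacement of $\psi_f(\ell)$ by a function of $\eta^\ell$ (Lemma \ref{lema4}) is precisely the step that is omitted here. The starting observation is the decomposition
\[
f(\eta_s) - \psi_f(\ell;\eta_s) = \big(f(\eta_s) - \psi_f(\ell_0;\eta_s)\big) + \big(\psi_f(\ell_0;\eta_s) - \psi_f(\ell;\eta_s)\big),
\]
which splits the quantity of interest into a contribution at the bottom scale $\ell_0$ and a contribution accumulated while passing from scale $\ell_0$ to scale $\ell$.

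First I would apply Minkowski's inequality to the time integral of this identity, so that the square root of the left-hand side of \eqref{est1} is bounded by the sum of the square roots of the two corresponding expectations. For the first term, I would invoke Lemma \ref{one-block} with $\ell$ taken equal to $\ell_0$; since $\ell_0$ and $\Var(f;\nu_\rho)$ are constants depending only on $f$ and $\rho$, this contributes at most $\sqrt{c\,t}$ for some $c=c(f,\rho)$, independently of $\ell$. It is essential that the one-block estimate be used only at the fixed bottom scale: applied directly at scale $\ell$ it would produce the useless bound $c\,t\,\ell^2$, and it is precisely the passage through the renormalization step (Lemma \ref{renormalization}) that upgrades this to the sharper rates.

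For the second term I would apply the two-blocks estimate, Lemma \ref{two-blocks}, which bounds it by $\sqrt{c\,t\,\ell}$ when $\varphi_f'(\rho)\neq 0$ and by $\sqrt{c\,t}\,\log\ell$ when $\varphi_f'(\rho)=0$. Squaring the sum of the two contributions and using $(a+b)^2 \le 2a^2 + 2b^2$ then yields, up to constants, $t + t\ell$ in the first case and $t + t(\log\ell)^2$ in the second.

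The only point requiring a little care — and the closest thing to an obstacle — is the absorption of the residual order-$t$ term coming from the one-block estimate into the announced bounds. In case (i) this is immediate, since $t \le t\ell$ for every $\ell \ge 1$. In case (ii) one uses that $\log\ell$ is bounded below by a positive constant once $\ell$ is bounded away from $1$ (say $\ell \ge 2$), so that $t \le c\,t(\log\ell)^2$ there; the finitely many remaining values of $\ell$ with $\ell_0 \le \ell < 2$ are absorbed into the constant $c=c(f,\rho)$. This gives \eqref{est1} in both cases and completes the argument.
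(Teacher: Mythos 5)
Your argument is correct and coincides with the paper's own derivation: Corollary \ref{cor1} is obtained exactly by combining the one-block estimate (Lemma \ref{one-block}) at the fixed bottom scale $\ell_0$ with the two-blocks estimate (Lemma \ref{two-blocks}) from $\ell_0$ to $\ell$ via Minkowski's inequality, the residual $O(t)$ term from the one-block step being absorbed into $ct\ell$, respectively $ct(\log\ell)^2$. The only caveat is your absorption step at $\ell=\ell_0=1$ in case (ii), where $(\log\ell)^2=0$ and no enlargement of $c$ can help; but there $\supp(f)\subseteq\{1\}$ together with $\varphi_f(\rho)=\varphi_f'(\rho)=0$ forces $f\equiv 0$, so the estimate holds trivially and your proof stands.
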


\begin{remark}
It has been proved in \cite{SX} that for functions $f$ satisfying $\varphi_f(\rho) = \varphi_f'(\rho)=\varphi_f''(\rho)=0$, the variance of $\int_0^t f(\eta_s)ds$ is bounded by $c(f,\rho)t$. Moreover, they also show that the scaling limit of $\int_0^t f(\eta_s)ds$ is given by a Brownian motion of variance given in terms of  a variational formula.
\end{remark}

\section{Proofs}
\label{s4}
\subsection[Additive functionals and proof of Theorem 2.1]{Additive functionals and proof of Theorem \texorpdfstring{ \ref{t1}}{2.1}}
\label{s4.1}
In this section we prove Theorem \ref{t1}. For each $\varepsilon >0$ and each $n \in \bb N$, let us define
\[
\mc Z_t^{n,\varepsilon} = \frac{1}{n^{3/2}} \int_0^{t n^2} (\eta_s^{\varepsilon n} -\rho) ds = \int_0^{t} \mc Y_s^n(i_\varepsilon (0)) ds.
\]

Recall the definition of $\Gamma_t^n(f)$ given in Section \ref{s1.4}. Suppose for a moment that $\varepsilon n$ is an integer. Using Theorem \ref{t7} with $\ell = \varepsilon n$ for time $t n^2$ we obtain the following {\em energy estimate}:
\begin{equation}
\label{localenergy}
\bb E_\rho\big[ \big( \Gamma_t^n(f) - \varphi_f'(\rho) \mc Z_t^{n,\varepsilon}\big)^2\big] \leq c\Big\{ \varepsilon t + \frac{t^2}{\varepsilon^2 n}\Big\}.
\end{equation}
When $\varepsilon n$ is not an integer, there are some round-off errors, which can be absorbed into the choice of the constant $c$ whenever $\varepsilon n \geq 2$, for example.
By the triangle's inequality, we also have the following estimate:
\begin{equation}
\label{localenergy2}
\bb E_\rho\big[ \big( \mc Z_t^{n,\varepsilon}  -  \mc Z_t^{n,\delta}\big)^2\big] \leq c\Big\{ \varepsilon t + \frac{t^2}{\delta^2 n}\Big\}
\end{equation}
for any $t \geq$, any $n \in \bb N$ and any $\varepsilon > \delta>2/n$, for a (maybe different) constant $c$ which does not depend on $n,t,\varepsilon$ or $\delta$. Now we prove Theorem \ref{t1} starting from \eqref{localenergy2}. We start proving the following moment estimate.

\begin{lemma}
\label{l1} There exists a constant $c$ such that for any $\varepsilon >0$ and any $0<s<t$,
\[
\bb E\big[\big(\mc Z_t^\varepsilon-\mc Z_s^\varepsilon\big)^2\big] \leq c|t-s|^{3/2},
\]
where $\{\mc Z_t^\varepsilon; t \in [0,T]\}$ is the process defined in \eqref{fBM}.
\end{lemma}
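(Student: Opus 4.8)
The goal is to bound the second moment of the increment $\mc Z_t^\varepsilon - \mc Z_s^\varepsilon = \int_s^t \mc Y_u(i_\varepsilon)\,du$ uniformly in $\varepsilon$, obtaining the order $|t-s|^{3/2}$ that is characteristic of a fractional Brownian motion of Hurst index $3/4$. The plan is to exploit the stationarity of $\{\mc Y_u; u \in [0,T]\}$ together with the explicit Gaussian structure of the Ornstein-Uhlenbeck process, which makes all covariances computable in closed form. Since the increment has mean zero, I would write
\[
\bb E\big[(\mc Z_t^\varepsilon - \mc Z_s^\varepsilon)^2\big] = \int_s^t \!\!\int_s^t \bb E\big[\mc Y_u(i_\varepsilon)\,\mc Y_v(i_\varepsilon)\big]\,du\,dv,
\]
so the whole problem reduces to computing and estimating the two-time covariance of the test function $i_\varepsilon$.

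\textbf{Computing the covariance.} For the stationary solution of \eqref{ec1}, the covariance $\bb E[\mc Y_u(g)\,\mc Y_v(g)]$ for $g \in L^2(\bb R)$ depends only on $|u-v|$, and it is given by a Gaussian/heat-kernel formula: writing $P_\tau = e^{\tau D \Delta}$ for the semigroup, one has $\bb E[\mc Y_u(g)\,\mc Y_v(g)] = \frac{\sigma^2}{2D}\langle g, P_{|u-v|} g\rangle$. I would substitute $g = i_\varepsilon$ and pass to the Fourier side, where $\widehat{P_\tau g}(\lambda) = e^{-D\lambda^2 \tau}\widehat g(\lambda)$, so that the covariance becomes $\frac{\sigma^2}{2D}\int_{\bb R} e^{-D\lambda^2 |u-v|}\,|\widehat{i_\varepsilon}(\lambda)|^2\,d\lambda$. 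The key structural observation is that $|\widehat{i_\varepsilon}(\lambda)|^2 \le |\widehat{i_0}(\lambda)|^2 = 1$ is bounded (since $i_\varepsilon$ is an $L^1$-normalized approximate identity) and, more to the point, one can bound the whole integral uniformly in $\varepsilon$ by dropping $|\widehat{i_\varepsilon}|^2 \le 1$ or by comparing to the $\delta_0$-covariance. This yields a bound of the form $\bb E[\mc Y_u(i_\varepsilon)\mc Y_v(i_\varepsilon)] \le c\,|u-v|^{-1/2}$, uniformly in $\varepsilon$, because $\int_{\bb R} e^{-D\lambda^2\tau}\,d\lambda = c\,\tau^{-1/2}$.

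\textbf{Finishing the estimate.} With the pointwise covariance bound $\bb E[\mc Y_u(i_\varepsilon)\mc Y_v(i_\varepsilon)] \le c\,|u-v|^{-1/2}$ in hand, the double integral is elementary:
\[
\int_s^t \!\!\int_s^t |u-v|^{-1/2}\,du\,dv \le c\,|t-s|^{3/2},
\]
which is exactly the claimed order. I would carry out this last double integral by the standard change of variables reducing it to $\int_0^{|t-s|}(|t-s|-w)w^{-1/2}\,dw$, which is finite and of order $|t-s|^{3/2}$. The constant $c$ is then independent of $\varepsilon$, as required, since the uniformity was already secured at the covariance step.

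\textbf{Anticipated obstacle.} The main delicate point is establishing the covariance bound \emph{uniformly in} $\varepsilon$, since naively $\mc Y_u(\delta_0)$ is not defined and one might fear the constant blows up as $\varepsilon \to 0$. The resolution is precisely that the singularity of the covariance is integrable: the $\tau^{-1/2}$ blow-up of the heat kernel at coincident times is exactly what produces the $3/2$ exponent, and the Fourier bound $|\widehat{i_\varepsilon}|^2 \le 1$ decouples the $\varepsilon$-dependence from the time-singularity. I expect the careful verification that the $\varepsilon$-regularization never worsens the time-singular behavior, together with justifying the Fubini interchange and the Fourier representation of the stationary covariance, to be the only technical content; everything else is a routine computation.
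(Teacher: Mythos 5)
Your proof is correct, but it takes a genuinely different route from the paper's. The paper never computes anything intrinsic to the Ornstein--Uhlenbeck process: it transfers the energy estimate \eqref{localenergy2} (itself a consequence of the local Boltzmann--Gibbs principle, Theorem \ref{t7}) from the particle system to the limit via Proposition \ref{p1}, obtaining $\bb E[(\mc Z_t^\varepsilon-\mc Z_t^\delta)^2]\leq c\varepsilon t$, then plays this against the crude white-noise bound \eqref{elem.est} and optimizes over the intermediate scale $\varepsilon=\sqrt t$, finishing with stationarity of increments; the case of general $D,\sigma$ is only recovered at the end of Section \ref{s4.1} by a space-time rescaling. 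You instead exploit the Gaussian structure directly: the stationary two-time covariance $\bb E[\mc Y_u(g)\mc Y_v(g)]=\frac{\sigma^2}{2D}\<g,P_{|u-v|}g\>$, the uniform Fourier bound $|\widehat{i_\varepsilon}(\lambda)|\leq \|i_\varepsilon\|_{L^1}=1$, and the heat-kernel singularity $\int e^{-D\lambda^2\tau}d\lambda = c\tau^{-1/2}$, so that the double integral gives $|t-s|^{3/2}$ in one stroke, for all $D,\sigma$ simultaneously and with no reference to the particle system. Your approach is more elementary and self-contained here, and in fact stronger: the exact covariance you compute would identify the fractional Brownian motion limit directly, without the self-similarity argument the paper uses afterwards. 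What the paper's roundabout route buys is robustness: the identical scheme is reused verbatim for the tightness of $\Gamma_t^n(f)$ and, crucially, for the stationary Hopf--Cole solution of the stochastic Burgers equation (Theorem \ref{t1forkpz}), where the limit is non-Gaussian and your semigroup computation is unavailable. One point you correctly flag but should carry out: the covariance formula must be justified from the paper's martingale-problem definition of \eqref{ec1} --- take $u_s = P_{t-s}g$ for smooth $g$, so that $(\partial_s + D\Delta)u_s = 0$ and $\mc Y_s(P_{t-s}g)$ is a martingale, yielding $\bb E[\mc Y_t(g)\mc Y_0(f)] = \frac{\sigma^2}{2D}\<P_t g, f\>$, and then extend from $\mc S(\bb R)$ to the (non-smooth) function $i_\varepsilon$ by the $L^2$-density argument already set up in Section \ref{s1.3}, using that $P_\tau$ is an $L^2$-contraction; the Fubini interchange is immediate for fixed $\varepsilon$ since the integrand is bounded by $\sigma^2/2D\varepsilon$. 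With those routine verifications included, your argument is complete.
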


\begin{proof}
Recall the convergence result stated in Proposition \ref{p1}. Since $L^2$-upper bounds are preserved by weak convergence, from \eqref{localenergy2} we deduce that there exists a constant $c$ such that
\begin{equation}
\label{en.est}
\bb E\big[\big(\mc Z_t^\varepsilon-\mc Z_t^{\delta}\big)^2\big] \leq c \varepsilon t
\end{equation}
for any $t \geq 0$ and any $\varepsilon>\delta>0$. Recall that for any fixed time $t \geq 0$, $\mc Y_t$ is a white noise of variance $\chi(\rho)$. By the Cauchy-Schwarz inequality, we have the elementary bound
\begin{equation}
\label{elem.est}
\bb E\big[\big(\mc Z_t^\varepsilon\big)^2\big] \leq \frac{\chi(\rho)t^2}{\varepsilon}.
\end{equation}
Fix $\delta >0$. For any $\varepsilon \geq \delta$ we have
\[
\bb E\big[\big(\mc Z_t^\delta\big)^2\big] \leq 2c\varepsilon t + \frac{2\chi(\rho) t^2}{\varepsilon}.
\]
For $t \geq \delta^2$, taking $\varepsilon = \sqrt t$ in the estimate above we prove that there is a constant $c$ independent of $t$ and $\delta$ such that
\begin{equation}
\label{moment}
 \bb E\big[\big(\mc Z_t^\delta\big)^2\big] \leq ct^{3/2}.
\end{equation}
For $t <\delta^2$, $t^2 \leq \delta t^{3/2}$. Therefore, taking $\varepsilon = \delta$ in \eqref{elem.est} we can extend the validity \eqref{moment} to any $t \geq 0$ and any $\delta > 0$. Since the process $\mc Y_t$ is stationary, the proof is finished.
\end{proof}

Now let us finish the proof of Theorem \ref{t1}. By Kolmogorov-Centsov's compactness criterion (see Problem 2.4.11 of \cite{KS}), the sequence of processes $\{\mc Z_t^\varepsilon; t \in [0,T]\}_{\varepsilon>0}$ is tight with respect to the uniform topology of $\mc C([0,T],\bb R)$. Therefore, it has at least one limit point $\{\mc Z_t; t \in [0,T]\}$. Moreover, by \eqref{en.est}, for any fixed time $t \geq 0$, the sequence $\{\mc Z_t^\varepsilon; \varepsilon >0\}$ is a Cauchy sequence in $L^2(\bb P)$. Therefore, for any $t \geq 0$, $\mc Z_t^\varepsilon$ converges in $L^2(\bb P)$ to $\mc Z_t$. This proves the uniqueness of the limit point $\{\mc Z_t; t \in [0,T]\}$ and the weak convergence with respect to the uniform topology of the whole sequence $\{\mc Z_t^\varepsilon; t \in [0,T]\}_{\varepsilon>0}$ to $\{\mc Z_t; t \in [0,T]\}$.
By the definition of $\mathcal{Z}_t^\varepsilon$ given in \eqref{fBM}, it is the integral of a Gaussian random variable $\mathcal{Y}_s(i_\varepsilon)$. Therefore $\mathcal{Z}_t^\varepsilon$ is a Gaussian random variable.
Since the space of Gaussian random variables is closed in $L^2$, then  $\mc Z_t$ is also Gaussian. The process $\mc Y_t$ has the following scale invariance: for any $F \in \mc S(\bb R)$ and any $\lambda >0$, $\mc Y_t(F(\cdot)) = \lambda^{1/2} \mc Y_{t\lambda^{-2}}(F(\lambda\cdot))$ in distribution. From this property, it follows that $\mc Z_t$ satisfies the following self-similarity relation: $\mc Z_{\lambda t} = \lambda^{3/4} \mc Z_t$ in distribution. These three properties, namely continuity, Gaussianity and $3/4$-self-similarity characterizes a fractional Brownian motion of Hurst exponent $H=3/4$. Therefore, Theorem \ref{t1} is proved for the case $D = D(\rho)$ and $\sigma^2 =2D(\rho)\chi(\rho)$. To show that Theorem \ref{t1} holds for any values of $D$ and $\sigma^2$, it is enough to observe that rescaling time and space properly we can obtain any value of $D$, $\sigma^2$.

\subsection[Proof of Theorems 2.4 and 2.5]{Proof of Theorems \texorpdfstring{ \ref{t3}}{2.4} and \texorpdfstring{ \ref{t4}}{2.5}}
In this section we prove Theorems \ref{t3} and \ref{t4}. Notice that Theorem \ref{t3} is just a particular case of Theorem \ref{t4}, so we will only prove Theorem \ref{t4}. We start proving tightness of the process $\{\Gamma_t^n; t \in [0,T]\}$. The proof will closely follow the proof of Lemma \ref{l1}. Using the stationarity of the process $\eta_t$ plus the Cauchy-Schwarz inequality, we see that there is a constant $c=c(\rho,f)$
\footnote{Up to the end of this section, we denote by $c$ any constant which depends only on $f$ and $\rho$. This constant may change from line to line.}
such that
\begin{equation}
\label{badbound}
\bb E_\rho\big[\Gamma_t^n(f)^2\big] \leq c t^2 n.
\end{equation}
This bound does not look too good, but it will be useful for very short times $t$. By Corollary \ref{cor1}, we also have the bound
\[
 \bb E_\rho\Big[\Big(\Gamma_t^n(f) - \frac{1}{n^{3/2}} \int_0^{tn^2} \psi_f(\ell;\eta_s)ds\Big)^2\Big] \leq \frac{ct \ell}{n}.
\]
By the Cauchy-Schwarz inequality, the stationarity of the process $\eta_t$ and Proposition \ref{p6}, we  have the bound
\[
 \bb E_\rho\Big[\Big(\frac{1}{n^{3/2}} \int_0^{tn^2} \psi_f(\ell;\eta_s)ds\Big)^2\Big] \leq \frac{c t^2 n}{\ell}
\]
for any $t \geq 0$ and any $\ell \geq \ell_0$. We conclude that the process $\Gamma_t^n(f)$ satisfies the moment bound
\[
 \bb E_\rho\big[\big(\Gamma_t^n(f)\big)^2\big] \leq c\Big\{ \frac{t \ell}{n} + \frac{t^2 n}{\ell}\Big\}
\]
for any $t \geq 0$ and any $\ell \geq \ell_0$. If we take $\ell = n \sqrt t$, we obtain the bound
\begin{equation}
\label{momentbound}
  \bb E_\rho\big[\big(\Gamma_t^n(f)\big)^2\big] \leq c t^{3/2},
\end{equation}
valid for any $t \geq \ell_0^2/n^2$. This last restriction comes from the restriction $\ell \geq \ell_0$. For $t \leq \ell^2/n^2$ we observe that $t^2 n \geq t^{3/2} \ell_0$. This observation allows to use \eqref{badbound} in order to extend \eqref{momentbound} to any $t \geq 0$. Since the process $\{\Gamma_t^n(f);t \in [0,T]\}$ has stationary increments, using Kolmogorov-Centsov's compactness criterion we conclude that the sequence of processes $\{\Gamma_t^n(f);t \in [0,T]\}_{n \in \bb N}$ is tight with respect to the uniform topology in $\mc D([0,T];\bb R)$. Let $\{\Gamma_t(f);t \in [0,T]\}$ be a limit point of $\{\Gamma_t^n(f);t \in [0,T]\}_{n \in \bb N}$. Without loss of generality, we can assume that the limit process $\Gamma_t(f)$ is defined in the same probability space on which the process $\{\mc Y_t;t \in [0,T]\}$ is defined. Recall that $L^2$-upper bounds are preserved by convergence in distribution. Therefore, from \eqref{localenergy} we conclude that
\[
 \bb E_\rho\big[ \big(\Gamma_t(f)- \varphi_f'(\rho)\mc Z_t^\varepsilon\big)^2\big] \leq ct\varepsilon.
\]
In particular, we conclude that $\{\Gamma_t(f);t \in [0,T]\}$ has the same finite-dimensional distributions of $\{\varphi_f'(\rho) \mc Z_t; t \in [0,T]\}$, which finishes the proof of Theorem \ref{t4}.

\subsection[Quadratic fields and the proof of Theorem 2.2]{Quadratic fields and the proof of Theorem \texorpdfstring{ \ref{t2}}{2.2}}
\label{s4.2}
In this section we prove Theorem \ref{t2}, starting from the
second-order Boltzmann-Gibbs principle stated in Theorem \ref{t6}. We will need the following {\em energy estimate} (proved in Corollary 3.11 of \cite{GJ}):

\begin{proposition}
 \label{p9}
Under the hypothesis of Theorem \ref{t6},
\begin{equation}
 \label{energy}
\bb E_\rho\Big[ \Big(\int_0^t \sum_{x \in \bb Z}\tau_x \big(f(\eta_s) - \psi_f(\ell;\eta_s)\big)h_x ds\Big)^2\Big] \leq ct\ell \sum_{x \in \bb Z} h_x^2.
\end{equation}
\end{proposition}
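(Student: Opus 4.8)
The plan is to mimic the four-step scheme used to prove Theorem~\ref{t7} (one-block, renormalization, two-blocks, equivalence of ensembles), but now carrying the spatial weight $\{h_x\}$ through every step. The key new device is to split $\bb Z$ into residue classes so that, within a class, the relevant functions have \emph{disjoint} supports and Proposition~\ref{p8} can be applied. Assume $\ell=2^m\ell_0$ (the general case is recovered at the end by one extra comparison, exactly as in Lemma~\ref{two-blocks}), and telescope
\[
 f - \psi_f(\ell) = \big(f - \psi_f(\ell_0)\big) + \sum_{i=1}^m \big(\psi_f(2^{i-1}\ell_0) - \psi_f(2^i\ell_0)\big).
\]
Applying $\tau_x$, multiplying by $h_x$, summing over $x\in\bb Z$ and integrating in time, Minkowski's inequality in $L^2(\bb P_\rho)$ reduces the problem to estimating the $L^2(\bb P_\rho)$-norm of the time integral of each of these $m+1$ terms separately.

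Consider first the renormalization term at scale $\ell'=2^{i-1}\ell_0$. The function $g_x:=\tau_x(\psi_f(\ell')-\psi_f(2\ell'))$ is supported on $\{x+1,\dots,x+2\ell'\}$ and satisfies $\varphi_{g_x}(\beta)=0$ for every $\beta$ (both $\psi_f(\ell')$ and $\psi_f(2\ell')$ have the same $\nu_\beta$-average, namely $\varphi_f(\beta)$). I would then split $\bb Z$ into the $2\ell'$ residue classes modulo $2\ell'$; within a fixed class the supports are pairwise disjoint, so Proposition~\ref{p8} applies and, using the variance bound $\Var(\psi_f(\ell')-\psi_f(2\ell');\nu_\rho)\le c/(\ell')^2$ from Proposition~\ref{p6}(ii) as in Lemma~\ref{renormalization} (this is precisely where $\varphi_f'(\rho)=0$ is used), gives
\[
 \bb E_\rho\Big[\Big(\int_0^t \sum_{x\equiv r} g_x(\eta_s)\,h_x \, ds\Big)^2\Big] \le \frac{18\kappa_0 t}{\varepsilon_0}(2\ell')^2 \frac{c}{(\ell')^2}\sum_{x\equiv r} h_x^2 \le c' t \sum_{x\equiv r} h_x^2 .
\]
Summing over the $2\ell'$ classes by Minkowski and then applying Cauchy--Schwarz over these classes produces a factor $\sqrt{2\ell'}$, so the scale-$i$ term contributes $(c' t\,2\ell' \sum_x h_x^2)^{1/2}$ to the $L^2(\bb P_\rho)$-norm. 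The one-block term at scale $\ell_0$ is handled in the same way, with $\Var(f-\psi_f(\ell_0);\nu_\rho)\le\Var(f;\nu_\rho)$ and block length $\ell_0$ (both constants), contributing $(c\,t\sum_x h_x^2)^{1/2}$.

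Collecting the contributions and summing the geometric series $\sum_{i=1}^m(2^i\ell_0)^{1/2}\le C\sqrt{2^m\ell_0}=C\sqrt\ell$, Minkowski yields
\[
 \bb E_\rho\Big[\Big(\int_0^t \sum_{x\in\bb Z} \tau_x\big(f-\psi_f(\ell)\big)(\eta_s)\, h_x\, ds\Big)^2\Big]^{1/2} \le C\,(t\ell)^{1/2}\Big(\sum_{x\in\bb Z} h_x^2\Big)^{1/2},
\]
which is the claim after squaring; for $h$ of infinite support one first proves the bound for finite truncations $h_x\mathbbm 1(|x|\le N)$ and passes to the limit. The \textbf{main obstacle}, and the whole reason for the multiscale scheme, is that a one-shot estimate at the \emph{large} scale $\ell$ would cost $c\,t\ell^3\sum_x h_x^2$: replacing $f$ by $\psi_f(\ell)$ in a single step carries the spectral-gap factor $\ell^2$, while the residue-class decomposition alone loses another factor equal to the block length. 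The renormalization scheme defeats this because each step now benefits from the improved variance decay $1/(\ell')^2$ of the second-order equivalence of ensembles (available exactly because $\varphi_f'(\rho)=0$); although each spatial renormalization step costs $c\,t\ell'$ rather than the $c\,t$ of the non-spatial Lemma~\ref{renormalization}, the extra factor $\ell'$ being again the number of residue classes, the sum over scales is dominated by the top scale and telescopes to the sharp $c\,t\ell$.
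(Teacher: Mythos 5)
Your proposal is correct and is essentially the intended argument: the paper does not prove Proposition \ref{p9} itself but cites Corollary 3.11 of \cite{GJ}, whose proof is exactly this weighted multiscale scheme --- the one-block bound, the dyadic renormalization built on Propositions \ref{p6} and \ref{p8} applied to disjoint blocks (your residue classes), and a sum over scales dominated by the top scale, just as in the paper's proof of Theorem \ref{t7}. The Cauchy--Schwarz loss of order $\sqrt{\ell'}$ from the $2\ell'$ residue classes at each scale is harmless precisely because the target bound grows linearly in $\ell$, so your per-scale cost $ct\ell'$ telescopes to the claimed $ct\ell\sum_x h_x^2$; the one point worth making explicit, which you correctly assert, is that $\int \psi_f(\ell)\,d\nu_\beta=\varphi_f(\beta)$ for \emph{every} $\beta$ (the conditional expectation is canonical, independent of $\rho$), which is what makes the centered differences $\psi_f(\ell')-\psi_f(2\ell')$ eligible for Proposition \ref{p8}.
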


The spirit of the proof of Theorem \ref{t2} is basically the same of the proof of Theorem \ref{t1}. For $x \in \bb Z$, $\ell \in \bb N$ and $\eta \in \Omega$, let us define $\eta^\ell(x) = \tau_x \eta^\ell = \ell^{-1} \sum_{i=1}^\ell \eta(x+i)$. In other words, $\eta^\ell(x)$ is the average density of particles on the interval $\{x+1,...,x+\ell\}$. For each $\varepsilon >0$ and each $n \in \bb N$, let us define the field $\{\mc A_t^{n,\varepsilon};t \in [0,T]\}$ as the $\mc S'(\bb R)$-valued process given by
\[
 \mc A_t^{n,\varepsilon}(u) = \frac{1}{n^2}\int_0^{tn^2} \sum_{x \in \bb Z} \Big(\big(\eta_s^{\varepsilon n}(x) -\rho\big)^2 -\frac{\chi(\rho)}{\varepsilon n}\Big) u(x/n) ds
\]
for any test function $u \in \mc S(\bb R)$ and any $t \in [0,T]$. Recall the definition of the process $\{\mc A_t^{\varepsilon};t \in [0,T]\}$ given in \eqref{defdea}. Since $\varepsilon >0$ is fixed, the convergence of the density field $\{\mc Y_t^n; t \in [0,1]\}$ ensures that
\[
 \lim_{n \to \infty} \mc A_t^{n,\varepsilon} = \mc A_t^\varepsilon
\]
in the sense of finite-dimensional distributions. Using triangle's inequality and \eqref{sobg} we obtain the following bound:
\[
 \bb E_\rho\big[ \big(\mc A_t^{n,\varepsilon}(u) - \mc A_t^{n,\delta}(u)\big)^2\big] \leq c\Big\{ t \varepsilon + \frac{t^2}{\delta^2 n}\Big\} \frac{1}{n} \sum_{x \in \bb Z} u(x/n)^2
\]
for any $0 < \delta < \varepsilon$, any $t \in [0,T]$ and any $u \in \mc S(\bb R)$. Taking the limit as $n \to \infty$, we obtain that
\begin{equation}
\label{enest}
 \bb E \big[\big(\mc A_t^\varepsilon(u)- \mc A_t^\delta(u)\big)^2\big] \leq c\varepsilon t \|u\|^2
\end{equation}
for any $0<\delta<\varepsilon$ and any $t \in [0,T]$. This inequality is what we call the {\em energy estimate} for the process $\{\mc Y_t; t \in [0,T]\}$. Notice that $\{\mc A_t^\varepsilon;\varepsilon>0\}$ is a Cauchy sequence in $L^2(\bb P)$. In particular,
for each fixed $t \in [0,T]$ and $u \in \mc S(\bb R)$ the random variable $\mc A_t(u) = \lim_{\varepsilon \to 0} \mc A_t^\varepsilon(u)$ is well defined. Notice that this does not imply the existence of the {\em process} $\{\mc A_t;t \in [0,T]\}$, neither the convergence of $\mc A_t^\varepsilon$ to $\mc A_t$ at the process level. However, if we prove that the sequence of processes $\{\mc A_t^\varepsilon; t \in [0,T]\}_{\varepsilon >0}$ is tight, then the existence and uniqueness of the process $\{\mc A_t;t \in [0,T]\}$ will be guaranteed. In order to prove tightness of a sequence of $\mc S'(\bb R)$-valued processes, the following criterion, known as Mitoma's criterion is very useful.

\begin{proposition}[Mitoma's criterion \cite{Mit}]
 \label{Mitoma} The sequence $\{\mc A_t^\varepsilon; t \in [0,T]\}_{\varepsilon >0}$ is tight with respect to the uniform topology of $\mc C([0,T];\mc S'(\bb R))$ if and only if for any $u \in \mc S(\bb R)$, the sequence $\{\mc A_t^\varepsilon(u); t \in [0,T]\}_{\varepsilon >0}$ is tight with respect to the uniform topology of $\mc C([0,T];\bb R)$.
\end{proposition}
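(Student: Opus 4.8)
The plan is to establish both implications; the forward direction is soft and the reverse direction carries all the content. For the ``only if'' direction I would argue that for each fixed $u \in \mc S(\bb R)$ the evaluation map $\pi_u\colon \mc S'(\bb R)\to\bb R$ given by $\pi_u(\omega)=\omega(u)$ is continuous and linear. Composing trajectories with $\pi_u$ then yields a continuous map $\mc C([0,T],\mc S'(\bb R))\to\mc C([0,T],\bb R)$, and since the pushforward of a tight family of laws under a continuous map is again tight (if $K$ is compact with $\bb P(X\in K)\ge 1-\eta$ then $\pi_u(K)$ is compact), tightness of $\{\mc A_t^\varepsilon;t\in[0,T]\}_{\varepsilon>0}$ immediately forces tightness of each $\{\mc A_t^\varepsilon(u);t\in[0,T]\}_{\varepsilon>0}$.

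For the substantive ``if'' direction I would exploit the nuclear structure of $\mc S(\bb R)$ to reduce the problem to a separable Hilbert space. Let $\{h_k\}_{k\ge 0}$ be the Hermite functions, the orthonormal eigenbasis of the harmonic oscillator $-\Delta+x^2$ with eigenvalues $\lambda_k=2k+1$, and for $p\in\bb R$ set $\|u\|_p^2=\sum_{k\ge 0}\lambda_k^{2p}\<u,h_k\>^2$, with $\mc S_p$ the corresponding completion and $\mc S_{-p}$ its dual. Then $\mc S(\bb R)=\bigcap_p\mc S_p$, $\mc S'(\bb R)=\bigcup_p\mc S_{-p}$, and crucially the inclusion $\mc S_{-p}\hookrightarrow\mc S_{-q}$ is Hilbert--Schmidt, hence compact, whenever $q-p>1/2$, because $\sum_k\lambda_k^{-2(q-p)}<\infty$. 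It therefore suffices to prove tightness of $\{\mc A_t^\varepsilon\}$ in $\mc C([0,T],\mc S_{-q})$ for a single large $q$, which then transfers to $\mc C([0,T],\mc S'(\bb R))$ through the continuous inclusion $\mc S_{-q}\hookrightarrow\mc S'(\bb R)$. In a separable Hilbert space this reduces, by the Arzel\`a--Ascoli plus Prokhorov criterion, to two ingredients: (a) tightness of the time-marginals $\mc A_t^\varepsilon$ in $\mc S_{-q}$ for each $t$, and (b) a uniform modulus-of-continuity estimate for $t\mapsto\mc A_t^\varepsilon$ in the $\|\cdot\|_{-q}$ norm.

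The per-coordinate hypothesis, applied to $u=h_k$, controls each scalar process $\mc A_\cdot^\varepsilon(h_k)$. Writing $\|\mc A_t^\varepsilon-\mc A_s^\varepsilon\|_{-q}^2=\sum_k\lambda_k^{-2q}\big(\mc A_t^\varepsilon(h_k)-\mc A_s^\varepsilon(h_k)\big)^2$ and $\|\mc A_t^\varepsilon\|_{-q}^2=\sum_k\lambda_k^{-2q}\,\mc A_t^\varepsilon(h_k)^2$, the summability of the weights $\lambda_k^{-2q}$ is what lets me assemble the coordinate-wise controls into the Hilbert-space estimates (a) and (b): the high modes contribute a uniformly small tail provided the coordinate moments do not grow too fast in $k$, and the low modes are handled by finitely many applications of the scalar hypothesis, whose joint tightness follows from marginal tightness for finitely many coordinates.

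The step I expect to be the main obstacle is precisely obtaining these bounds uniformly in the mode index $k$ and in $\varepsilon$ simultaneously, equivalently showing that all the fields $\mc A^\varepsilon$ live, with uniformly small probability of large norm, in one fixed space $\mc S_{-p}$. The hypothesis gives, for each individual $k$, that $\sup_\varepsilon\bb P\big(\sup_{t\le T}|\mc A_t^\varepsilon(h_k)|>R\big)\to 0$ as $R\to\infty$, but a priori with no uniformity in $k$; upgrading this to a single controlling Hilbertian seminorm is exactly the content of Mitoma's regularization, which runs a uniform-boundedness (Baire category) argument against the nuclear topology. In the present application this abstract step can be bypassed: the second-order Boltzmann--Gibbs principle and the energy estimate \eqref{enest} furnish explicit second-moment bounds, so I would instead verify directly that $\sup_\varepsilon\bb E\big[\sup_{t\le T}\|\mc A_t^\varepsilon\|_{-p}^2\big]<\infty$ for a suitable $p$, together with a Kolmogorov-type increment bound in $\|\cdot\|_{-p}$; passing through the compact embedding $\mc S_{-p}\hookrightarrow\mc S_{-q}$ then delivers (a) and (b), and hence the desired tightness.
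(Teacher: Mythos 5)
First, note that the paper does not prove Proposition \ref{Mitoma} at all: it is quoted from \cite{Mit}, so the only meaningful comparison is with Mitoma's original argument. Your ``only if'' direction is correct as written (evaluation $\pi_u$ is continuous, trajectory-wise composition is continuous between the two uniform topologies, and continuous images of compacts witness tightness of the pushforwards). Your ``if'' direction sets up exactly the right scaffolding — Hermite basis, the scale of spaces $\mc S_{-p}$, Hilbert--Schmidt embeddings for $q-p>1/2$, Arzel\`a--Ascoli plus Prokhorov in a Hilbert space — but it has a genuine gap precisely at the step you flag, and the bypass you propose does not repair it \emph{as a proof of the stated proposition}. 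From the hypothesis alone you obtain, for each fixed $k$ and $\eta>0$, a radius $R_k$ with $\sup_\varepsilon \bb P\big(\sup_{t\leq T}|\mc A_t^\varepsilon(h_k)|>R_k\big)\leq \eta 2^{-k-1}$, but nothing prevents $R_k$ from growing faster than any power of $\lambda_k$; consequently no fixed $q$ makes $\sum_k \lambda_k^{-2q}R_k^2$ finite, and the step where you ``assemble the coordinate-wise controls'' into a bound on $\|\cdot\|_{-q}$ collapses. The missing content is Mitoma's regularization lemma: the family of maps $u\mapsto \bb P\big(\sup_{t\leq T}|\mc A_t^\varepsilon(u)|>\delta\big)$ is equicontinuous at $0$ in $\mc S(\bb R)$, uniformly in $\varepsilon$, which is proved by a Baire-category (uniform boundedness) argument on the Fr\'echet space $\mc S(\bb R)$ and is what confines the whole family, with high probability, to a single ball of some fixed $\mc S_{-p}$. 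You correctly name this lemma as the crux, but you do not prove it, so the substantive direction is not established.

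Your proposed bypass proves a different statement. Assuming $\sup_\varepsilon \bb E\big[\sup_{t\leq T}\|\mc A_t^\varepsilon\|_{-p}^2\big]<\infty$ together with a Kolmogorov increment bound imports hypotheses strictly stronger than, and independent of, the scalar-tightness hypothesis of Proposition \ref{Mitoma} — indeed, with those moment bounds in hand you never use scalar tightness at all, and you obtain only the conclusion for this particular family, not the equivalence. That said, the observation is sound as an \emph{alternative to invoking the proposition} in this paper: since the increments of $\mc A_t^\varepsilon(u)$ are stationary and the estimates of Section \ref{s4.2} (in particular \eqref{enest} combined with the elementary Cauchy--Schwarz bound) give $\bb E\big[\big(\mc A_t^\varepsilon(u)-\mc A_s^\varepsilon(u)\big)^2\big]\leq c|t-s|^{3/2}\|u\|^2$ uniformly in $\varepsilon$, summing over the Hermite modes (each with $\|h_k\|=1$) yields $\bb E\big[\|\mc A_t^\varepsilon-\mc A_s^\varepsilon\|_{-q}^2\big]\leq c|t-s|^{3/2}$ for any $q>1/2$, and Kolmogorov--Centsov in $\mc S_{-q}$ followed by the compact embedding $\mc S_{-q}\hookrightarrow\mc S_{-q'}$, $q'>q+1/2$, gives tightness in $\mc C([0,T],\mc S'(\bb R))$ directly, so the appeal to Mitoma in the proof of Theorem \ref{t2} could be avoided altogether. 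But as a proof of Proposition \ref{Mitoma} as stated, the attempt is incomplete.
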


Using the Cauchy-Schwarz inequality and the stationarity of the process $\mc Y_t$, we obtain the simple bound
\[
 \bb E\big[\mc A_t^\delta(u)^2\big] \leq ct^2\delta^{-1} \|u\|^2
\]
for any $t \in [0,T]$ and any $\delta>0$. Taking $\delta =\sqrt t$ in this estimate and in the energy estimate \eqref{enest} we obtain the bound
\[
 \bb E\big[\mc A_t^\varepsilon(u)^2\big] \leq c t^{3/2}
\]
for any $t \geq \varepsilon^2$. For $t \leq \varepsilon^2$, we observe that $t^2 \leq \varepsilon t^{3/2}$, which extends the bound for any $t \in [0,T]$. Since the process $\mc A_t^\varepsilon(u)$ has stationary increments, using Kolmogorov-Centsov's compactness criterion (see Problem 2.4.11 of \cite{KS}) we conclude that the sequence $\{\mc A_t^\varepsilon(u); t \in [0,T]\}_{\varepsilon >0}$ is tight, which finishes the proof of Theorem \ref{t2}.

\subsection[Proof of Theorem 2.6]{Proof of Theorem \texorpdfstring{ \ref{t5}}{2.6}}
In this section we prove Theorem \ref{t5}. We start proving that the sequence $\{\Lambda^{n,f}_t; t\in [0,T]\}_{\varepsilon >0}$ is tight. By Mitoma's criterion (stated in Proposition \ref{Mitoma}), it is enough to show that $\{\Lambda_t^{n,f}(u); t\in [0,T]\}_{\varepsilon >0}$, for $u\in{\mc S(\bb R)}$ is tight. Recall that $\Var(\psi_f(\ell);\nu_\rho) \leq c /\ell^2$. By the Cauchy-Schwarz inequality,
\[
 \bb E_\rho\Big[\Big(\int_0^ t\sum_{x \in \bb Z} \psi_f(\ell;\eta_s) h_xds\Big)^2\Big] \leq t^2 \int \Big(\sum_{x \in \bb Z} \psi_f(\ell) h_x\Big)^2 d\nu_\rho \leq \frac{ct^2}{\ell} \sum_{x \in \bb Z} h_x^2.
\]
This estimate combined with the energy estimate \eqref{energy} gives
\[
 \bb E_\rho\big[\Lambda_t^{n,f}(u)^2\big] \leq c\Big\{\frac{t\ell}{n} + \frac{t^2 n}{\ell}\Big\} \frac{1}{n} \sum_{x \in \bb Z} u(x/n)^2.
\]
The sum $n^{-1}\sum_{x \in \bb Z} u(x/n)^2$ converges to $\|u\|^2$, so it is bounded in $n$. Choosing $\ell = n\sqrt t$, we show that
\begin{equation}
\label{ec44}
 \bb E_\rho\big[\Lambda_t^{n,f}(u)^2\big] \leq c(\rho,f,u) t^{3/2}
\end{equation}
as long as $ t> \ell_0^2/n^2$. When $t < \ell_0^2/n^2$ we use the bound
\[
\begin{split}
 \bb E_\rho\big[\Lambda_t^{n,f}(u)^2\big]
    &\leq t^2 \int \Big(\sum_{x \in \bb Z} \big(\tau_x f -\varphi_f(\rho)\big)\,u(x/n)\Big)^2 d\nu_\rho\\
    &\leq c t^2 \sum_{x \in \bb Z} u(x/n)^2
    \leq c t^{3/2} \frac{1}{n} \sum_{x \in \bb Z} u(x/n)^2,
\end{split}
\]
which extends \eqref{ec44} to any $t \in [0,T]$. Once more, stationarity of $\eta_t$ and Kolmogorov-Centsov's compactness criterion show tightness of the sequence $\{\Lambda_t^{n,f}(u);t \in [0,T]\}_{\varepsilon >0}$, for $u\in{\mc S(\bb R)}$. We conclude that the sequence $\{ \Lambda_t^{n,f};t \in [0,T]\}_{\varepsilon>0}$ is tight in $\mc C([0,T];\mc S'(\bb R)\}$.

Let $\{\Lambda_t^f;t \in [0,T]\}$ be a limit point of $\{\Lambda_t^{n,f};t \in [0,T]\}_{\varepsilon >0}$. Using the second-order Boltzmann-Gibbs principle at time $tn^2$ with $\ell = \varepsilon n$, we see that
\[
 \bb E_\rho\Big[\Big(\Lambda_t^{n,f}(u)-\frac{\varphi_f''(\rho)}{2}\mc A_t^{n,\varepsilon}(u)\Big)^2\Big] \leq c\Big\{ \varepsilon t+ \frac{t^2}{\varepsilon^2 n}\Big\} \frac{1}{n} \sum_{x \in \bb Z} u(x/n)^2.
\]
Taking the limit $n \to \infty$, we obtain that
\[
 \bb E_\rho \Big[\Big(\
 \Lambda_t^f(u) -\frac{\varphi_f''(\rho)}{2} \mc A_t^\varepsilon(u)\Big)^2\Big] \leq c \varepsilon t \|u\|^2.
\]
Taking now the limit $\varepsilon \to 0$, we conclude that $\Lambda_t^f$ has the same finite-dimensional distributions of the process $\frac{\varphi_f''(\rho)}{2} \mc A_t$ and therefore they are equal in distribution. This finishes the proof of Theorem \ref{t5}.

\section{Additive functionals of the exclusion process}
\label{s5}
In this section we explain how to extend the proofs of the main theorems for two type of non-reversible processes: the mean-zero exclusion process and the weakly asymmetric exclusion process.

\subsection{The mean-zero exclusion process}
\label{s5.1}
The mean-zero, non-symmetric exclusion process $\eta_t$ introduced in Section \ref{s1.2} is perhaps the simplest example of a diffusive, non-reversible, non-gradient system. The adjective ``diffusive'' comes from the fact that the evolution of the density of particles for this model is non-trivial under a diffusive scaling of time. As discussed in Section \ref{s1.2}, the invariant measures are non-reversible for $\eta_t^{\mathrm{ex}}$ since we are assuming that $p(\cdot)$ is not symmetric. Let us define the density fluctuation field $\{\mc Y_t^n;t \in [0,T]\}$ like in Section \ref{s1.4}. The following result is not stated anywhere, but it can be proved following the methods in \cite{CLO} and using the formula for the diffusion coefficient $D(\rho)$ given in \cite{Sue}:

\begin{proposition}[\cite{Cha2,Sel}]
\label{mzep}
 The process $\{\mc Y_t^n;t \in [0,T]\}$ converges in distribution with respect to the $J_1$-Skorohod topology of $\mc D([0,T],\mc S'(\bb R))$ to the stationary solution of the infinite-dimensional Ornstein-Uhlenbeck equation
\begin{equation*}
 d\mc Y_t = D(\rho) \Delta \mc Y_t dt + \sqrt{2D(\rho) \chi(\rho)} \nabla d \mc M_t.
\end{equation*}
\end{proposition}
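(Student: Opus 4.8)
The plan is to characterize the limit through the martingale problem associated with \eqref{ec2}, adapting the reversible argument underlying Proposition \ref{p1} to the non-reversible, non-gradient setting by importing the non-gradient machinery of \cite{CLO} and the diffusion coefficient of \cite{Sue}. Fix $u \in \mc S(\bb R)$. Dynkin's formula yields the decomposition
\[
\mc Y_t^n(u) = \mc Y_0^n(u) + \int_0^t n^2 L_{\mathrm{ex}} \mc Y_s^n(u)\, ds + M_t^n(u),
\]
with $\{M_t^n(u); t \in [0,T]\}$ a martingale. Since $\mc Y_0^n$ converges to a white noise of variance $\chi(\rho)$ (the static central limit theorem recalled in Section \ref{s1.4}) and the Cauchy problem for \eqref{ec2} is well-posed by \cite{HS}, everything reduces to (a) identifying the limit of the drift term, (b) computing the limiting quadratic variation of the martingale, and (c) proving tightness.

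The crux is step (a). As the dynamics is conservative, $n^2 L_{\mathrm{ex}}\eta(x)$ is the discrete divergence of an instantaneous current $W_{x,x+1}$ collecting all jumps across the bond $(x,x+1)$; after a discrete summation by parts the drift becomes a spatial average of $W_{x,x+1}$ against $\tfrac{1}{n}\nabla u(x/n)$. The essential difficulty, and the reason this is not a direct consequence of the Boltzmann-Gibbs principle, is that the model is non-gradient, so $W_{x,x+1}$ is not a discrete gradient and cannot be integrated by parts a second time. The key input is therefore the fluctuation-dissipation decomposition
\[
W_{x,x+1} = -D(\rho)\,\big(\eta(x+1)-\eta(x)\big) + L_{\mathrm{ex}} \tau_x g + \mc R_x,
\]
where $g$ is a local function (obtained as a limit of local approximations), $D(\rho)$ is the diffusion coefficient characterized variationally in \cite{Sue}, and $\mc R_x$ is a remainder small in an appropriate $H_{-1}$ norm. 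The gradient part produces $\int_0^t D(\rho)\,\mc Y_s^n(\Delta u)\,ds$ after a second summation by parts, while the remainder is shown to vanish in $L^2(\bb P_\rho)$ through a Kipnis-Varadhan estimate. Since reversibility is unavailable, this estimate must be used in the form valid for general stationary processes (Proposition \ref{p2}, established without reversibility in \cite{CLO}). I expect this non-gradient replacement to be the main obstacle.

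For step (b), the term $L_{\mathrm{ex}}\tau_x g$ is itself handled by a further application of Dynkin's formula: it contributes a boundary term of order $O(1/n)$ that vanishes in $L^2(\bb P_\rho)$, together with an additional martingale that must be added to $M_t^n(u)$. A direct computation of the predictable quadratic variation of the resulting total martingale (the carr\'{e} du champ), combined with the variational formula for $D(\rho)$, the stationarity of $\eta_t^{\mathrm{ex}}$ and the ergodicity of $\nu_\rho$, shows that it converges in $L^1(\bb P_\rho)$ to $2D(\rho)\chi(\rho)\,\|\nabla u\|^2\, t$; the Einstein relation encoded in the definition of $D(\rho)$ is exactly what makes this constant come out right.

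Finally, for step (c) I would establish moment bounds on the increments of the drift and of the martingale and invoke a Kolmogorov/Aldous criterion to get tightness of $\{\mc Y_t^n(u); t\in[0,T]\}$ in $\mc D([0,T],\bb R)$, upgrading to tightness of $\{\mc Y_t^n; t\in[0,T]\}$ in $\mc D([0,T],\mc S'(\bb R))$ via Mitoma's criterion (Proposition \ref{Mitoma}). Steps (a) and (b) then identify every limit point as a solution of the martingale problem for \eqref{ec2} started from a white noise of variance $\chi(\rho)$, which is precisely the invariant white-noise variance $\sigma^2/2D(\rho)$ of \eqref{ec2} with $\sigma^2 = 2D(\rho)\chi(\rho)$; hence the limit is the stationary solution. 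Well-posedness from \cite{HS} gives uniqueness of the limit point and therefore convergence of the full sequence.
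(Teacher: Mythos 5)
Your proposal is correct in outline and follows essentially the same route the paper intends: the paper gives no proof of Proposition \ref{mzep}, stating only that it ``can be proved following the methods in \cite{CLO} and using the formula for the diffusion coefficient $D(\rho)$ given in \cite{Sue}'' (with the reversible case in \cite{Cha2,Sel}), and that is precisely the non-gradient martingale-problem argument you sketch---fluctuation-dissipation decomposition of the current, Kipnis--Varadhan without reversibility, quadratic variation identification, Mitoma tightness, and uniqueness via \cite{HS}. The only caveat worth noting is that the corrector $g$ is not a single local function but a sequence of local approximations whose $H_{-1}$ remainders vanish only after a double limit, a point your parenthetical already gestures at.
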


Notice that the definition of the $H_{-1}$-norm stated in \eqref{h-1} is insensitive to the asymmetric part of the dynamics. More precisely, we have the identity
\[
 \|f\|_{-1}^2 =\sup_{g \text{ local}} \big\{2\<f,g\>_\rho - \<g,-Sg\>_\rho\big\},
\]
where $S= (L+L^*)/2$ is the symmetric part of the generator $L$. Due to the product structure of the invariant measure $\nu_\rho$, the operator $S$ is easy to compute. In fact, $S$ corresponds to the generator of a exclusion process associated to the probability measure $p^s:\bb Z \setminus \{0\}  \to [0,1]$ defined as $p^s(z) = (p(z)+p(-z))/2$. The spectral gap inequality stated in Proposition \ref{p3} can be easily proved using a comparison argument and the irreducibility of $p(\cdot)$. Since the proof of Theorems \ref{t3}, \ref{t4} and \ref{t5} only rely on Kipnis-Varadhan's inequality and the convergence of the density fluctuation field to the corresponding Ornstein-Uhlenbeck process, those theorems generalize to the mean-zero exclusion process straightforwardly.

\subsection{Weakly asymmetric exclusion process}
\label{s5.2}
The weakly asymmetric exclusion process is not a process, but rather a sequence of processes indexed by the scaling parameter $n$. Let $\{a_n; n \in \bb N\}$ be a sequence of numbers converging to 0 as $n \to \infty$. For each $n \in \bb N$ large enough, let us define the probability measure $p_n(\cdot)$ as
\[
 p_n(z) =
\begin{cases}
 \frac{1}{2}(1+a_n), & x=1\\
\frac{1}{2}(1-a_n), & x=-1\\
0,& x \neq \pm 1.
\end{cases}
\]

Let us consider the exclusion process $\{\eta_{t}^{\mathrm{ex}}; t \geq 0\}$ associated to $p_n(\cdot)$. Although we will not write it explicitly, the process $\{\eta_{t}^{\mathrm{ex}}; t \geq 0\}$ is actually a sequence of processes indexed by $n$. Of course, if $a_n \equiv 0$, the process $\{\eta_t^{\mathrm{ex}}; t \geq 0\}$ is just the lattice gas process defined in Section \ref{s1.1} with $r \equiv 1/2$ and therefore Theorems \ref{t3}, \ref{t4}, \ref{t5} apply to this process. It is natural to ask under which  conditions on $\{a_n\}_{n \in \bb N}$ Theorems \ref{t3}, \ref{t4} and \ref{t5} are still valid for the sequence of processes $\{\eta_t^{\mathrm{ex}}; t \in [0,T]\}$. It turns out that there are two regimes under which this question can be answered in a satisfactory way.

The first regime is the following. Let us assume that there is a constant $a \in \bb R$ such that $\lim_{n \to \infty} n a_n =a$. In order to describe the scaling limit of the density field in this situation, we need to introduce the infinite-dimensional Ornstein-Uhlenbeck process with drift. Recall the notations introduced in Section \ref{s1.3}. For $D,\sigma >0$ and $v \in \bb R$ we say that a process $\{\mc Y_t; t \in [0,T]\}$ is a solution of the infinite-dimensional Ornstein-Uhlenbeck equation
\[
 d \mc Y_t = D \Delta \mc Y_t dt + a\nabla \mc Y_t dt + \sigma \nabla d\mc M_t
\]
if for any smooth trajectory $t \mapsto u_t$ from $[0,T] $ to $\mc S(\bb R)$, the process
\[
 \mc Y_t(u_t) -\mc Y_0(u_0) -\int_0^t \mc Y_s((\partial_s + D \Delta -a\nabla)u_s) ds
\]
is a martingale of quadratic variation $\sigma^2 \int_0^t \|\nabla u_s\|^2 ds$.
The following proposition has been proved in \cite{Cha}, \cite{D-MPS} and \cite{DG}:

\begin{proposition}
 \label{WASEP1}
Fix $\rho \in (0,1)$. Suppose that $\eta_0^{\mathrm{ex}}$ is distributed according to $\nu_\rho$. Let $\{\mc Y_t^n; t \in [0,T]\}$ be the density field defined on $u \in \mc S(\bb R)$ as
\begin{equation}
\label{densfi}
 \mc Y_t^n(u) = \frac{1}{\sqrt n} \sum_{x \in \bb Z} \big(\eta_{tn^2}^{\mathrm{ex}}(x)-\rho\big) u(x/n).
\end{equation}
The process $\{\mc Y_t^n; t \in [0,T]\}$ converges in distribution with respect to the $J_1$-Skorohod topology of $\mc D([0,T],\mc S'(\bb R))$ to the stationary solution of the infinite-dimensional Ornstein-Uhlenbeck equation
\begin{equation}
\label{ec5.1}
 d\mc Y_t = \frac{1}{2} \Delta \mc Y_t dt + a(1-2\rho) \nabla \mc Y_t dt + \sqrt{\chi(\rho)} \nabla d \mc M_t.
\end{equation}
\end{proposition}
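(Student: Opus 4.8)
The plan is to follow the classical martingale (Holley--Stroock) approach to density fluctuations, treating the weak drift as a perturbation of the symmetric case and letting the extra factor $(1-2\rho)$ emerge from a Boltzmann--Gibbs replacement. Fix $u \in \mc S(\bb R)$. By Dynkin's formula,
\[
M_t^n(u) = \mc Y_t^n(u) - \mc Y_0^n(u) - \int_0^t n^2 L_{\mathrm{ex}}^n \mc Y_s^n(u)\, ds
\]
is a martingale, and the first step is to compute $n^2 L_{\mathrm{ex}}^n \mc Y_s^n(u)$ explicitly. A direct computation of $L_{\mathrm{ex}}^n \eta(x)$ with rates $p_n(\pm 1) = \tfrac12(1\pm a_n)$ splits the generator into a symmetric part, which after a discrete summation by parts yields $\tfrac12 \mc Y_s^n(\Delta u) + o(1)$, and an antisymmetric part of order $a_n$. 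The antisymmetric part produces two contributions: a genuine linear term $\tfrac{na_n}{\sqrt n}\sum_x (\eta_s(x)-\rho)u'(x/n)$, which under $na_n \to a$ converges to $a\,\mc Y_s^n(\nabla u)$, and a quadratic interaction term of the form $-\tfrac{na_n}{\sqrt n}\sum_x \eta_s(x)\eta_s(x+1)\,u'(x/n)$.

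The crucial step is to handle this quadratic term. Setting $g(\eta) = \eta(0)\eta(1)$ one has $\varphi_g(\rho) = \rho^2$ and $\varphi_g'(\rho) = 2\rho$, so the first-order Boltzmann--Gibbs principle (in the form of Proposition~\ref{p3.1}, which only uses the symmetric part of the dynamics and therefore survives the weak asymmetry) replaces $\tau_x g(\eta_s)$ by $\rho^2 + 2\rho(\eta_s(x)-\rho)$ inside the time integral, up to an error vanishing in $L^2(\bb P_\rho)$. The constant part drops out because $\tfrac{1}{\sqrt n}\sum_x u'(x/n)$ is a negligible Riemann sum of a function with zero integral, while the linear part contributes $-2a\rho\,\mc Y_s^n(\nabla u)$. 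Adding the genuine drift, the total drift of the field is $a(1-2\rho)\,\mc Y_s^n(\nabla u) + o(1)$, which is exactly the drift appearing in \eqref{ec5.1}.

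It remains to control the martingale. A standard carr\'e-du-champ computation gives
\[
\langle M^n(u)\rangle_t = \int_0^t \tfrac1n \sum_x \big(p_n(1)\eta_s(x)(1-\eta_s(x+1)) + p_n(-1)\eta_s(x+1)(1-\eta_s(x))\big) u'(x/n)^2\, ds + o(1),
\]
and a replacement argument (again through the symmetric part) together with $p_n(1)+p_n(-1)=1$ shows this converges in $L^2$ to $t\,\chi(\rho)\|\nabla u\|^2$; hence $M_t^n(u)$ converges to a Brownian motion of the right variance, matching $\sigma=\sqrt{\chi(\rho)}$. For tightness I would invoke Mitoma's criterion (Proposition~\ref{Mitoma}) to reduce to the real-valued processes $\{\mc Y_t^n(u)\}$, and then Kolmogorov--Centsov as in the proof of Lemma~\ref{l1}, using stationarity of $\eta_t^{\mathrm{ex}}$ and the above moment bounds. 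Finally, any limit point solves the martingale problem associated to \eqref{ec5.1}; since $\mc Y_0^n$ converges to a white noise of variance $\chi(\rho)$ (the stationary law of \eqref{ec5.1}) and this martingale problem is well-posed by \cite{HS}, the limit is the stationary solution, and uniqueness of the limit point upgrades convergence to the whole sequence.

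The main obstacle is the Boltzmann--Gibbs replacement of the quadratic term $\eta_s(x)\eta_s(x+1)$, which is precisely what generates the drift coefficient $a(1-2\rho)$; the delicate point is that the process is non-reversible, so one must check (as in \cite{CLO}, and as noted for the mean-zero case in Section~\ref{s5.1}) that the relevant $H_{-1}$ estimates depend only on the symmetric part $S$ of the generator and are insensitive to the $O(a_n)$ asymmetry. The replacement in the quadratic variation is of the same nature but easier, since there only a law of large numbers, rather than a fluctuation-scale bound, is needed.
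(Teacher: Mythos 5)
The paper does not actually prove Proposition \ref{WASEP1}: it is quoted from \cite{Cha}, \cite{D-MPS} and \cite{DG}, so there is no internal proof to compare against line by line; your sketch follows the classical martingale (Brox--Rost/Holley--Stroock) route underlying those references and the reduction the paper itself outlines in Section \ref{s5.1}, and it is correct in all essentials. Your current decomposition is right: with $p_n(\pm1)=\tfrac12(1\pm a_n)$, the instantaneous current across the bond $(x,x+1)$ is $\tfrac12\big(\eta(x)-\eta(x+1)\big)+a_n\big(\tfrac{\eta(x)+\eta(x+1)}{2}-\eta(x)\eta(x+1)\big)$, whose asymmetric part has expectation $\chi(\beta)$ under $\nu_\beta$ and hence derivative $1-2\rho$ at $\rho$, so the Boltzmann--Gibbs replacement of $\eta(x)\eta(x+1)$ produces exactly the drift $a(1-2\rho)\,\mc Y_s^n(\nabla u)$; and you correctly isolate the one genuinely delicate point, namely that Proposition \ref{p3.1} must hold uniformly in $n$ for the $n$-dependent, non-reversible dynamics, which it does because the Kipnis--Varadhan bound of \cite{CLO} and the one-block estimate only see the symmetric part of the generator, here the simple exclusion generator with rate $\tfrac12$, independent of $a_n$. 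Two small repairs are needed. First, Kolmogorov--Centsov as used in Lemma \ref{l1} applies to continuous paths, while $\mc Y_t^n(u)$ is c\`adl\`ag; moreover, for the martingale part the second-moment bound $\bb E_\rho\big[\big(M_t^n(u)-M_s^n(u)\big)^2\big]\leq c|t-s|$ has the wrong exponent for that criterion in any case, so you should pass through fourth moments (Burkholder--Davis--Gundy, plus the uniformly vanishing jump sizes $O(n^{-1/2})$) or invoke Aldous' criterion, still in combination with Mitoma's criterion as you propose. Second, to identify the limit as the \emph{stationary} solution you should note explicitly that the white noise of variance $\chi(\rho)$ is invariant for \eqref{ec5.1} --- the extra drift $a(1-2\rho)\nabla$ is antisymmetric in $L^2(\bb R)$ and does not alter the invariant law --- after which well-posedness of the martingale problem \cite{HS} and convergence of $\mc Y_0^n$ conclude the argument as you say.
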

When $a =0$ or $\rho =1/2$, this proposition is just saying that the scaling limit of the density field is not affected by a small asymmetry (that is, when $n a_n \to 0$ as $n \to \infty$, or when $n a_n$ is bounded if $\rho = 1/2$). When $a \neq 0$, we see that $1/n$ is the exact order of magnitude of the asymmetry for which it appears a non-trivial modification of the scaling limit. Notice that independently of the value of $a_n$,
\[
 p_n^s(z) =
\begin{cases}
 \frac{1}{2}, & x = \pm 1\\
0, & x \neq \pm 1.
\end{cases}
\]
In particular, following the arguments presented in Section \ref{s5.1}, Theorems \ref{t3}, \ref{t4} and \ref{t5} and suitable modifications of Theorems \ref{t1}, \ref{t2} hold true for the weakly asymmetric exclusion process and for the Ornstein-Uhlenbeck process with drift. Here we just state Theorems \ref{t1} and \ref{t4} for the infinite-dimensional Ornstein-Uhlenbeck process with drift and the weakly asymmetric exclusion process.

\begin{theorem}
 \label{twasep1}
Let $\{\mc Y_t; t \in [0,T]\}$ be the stationary solution of \eqref{ec5.1}. For $\varepsilon \in (0,1)$ and $t \in [0,T]$, define $\mc Z_t^\varepsilon$ as
\[
\mc Z_t^ \varepsilon = \int_0^t \mc Y_s(i_\varepsilon) ds.
\]
Then the sequence of processes $\{\mc Z_t^\varepsilon; t \in [0,T]\}_{\varepsilon}$ converges in distribution with respect to the uniform topology of $\mc C([0,T]; \bb R)$ as $\varepsilon \to 0$, to a Gaussian process $\{\mc Z_t;t \in [0,T]\}$ of stationary increments, satisfying
\[
 \bb E\big[\mc Z_t^2 \big] =\chi(\rho)\sqrt{\frac{2}{\pi}} \int_0^t \frac{(t-s)e^{-(a(1-2\rho))^2s/2}}{\sqrt s}ds.
\]
\end{theorem}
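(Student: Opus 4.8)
The plan is to follow the proof of Theorem \ref{t1} almost line for line, replacing the heat semigroup by the transport--diffusion semigroup generated by the drifted operator, and to perform the variance computation explicitly at the level of the limiting Gaussian field. The first observation, already used in Section \ref{s5.1}, is that the symmetric part of the generator of the weakly asymmetric exclusion process is precisely the generator of the symmetric simple exclusion process. Hence the local Boltzmann--Gibbs principle (Theorem \ref{t7}) and the resulting energy estimate \eqref{localenergy2} hold verbatim, now with $D(\rho)=1/2$. Passing to the limit $n\to\infty$ through Proposition \ref{WASEP1}, and using that $L^2$-upper bounds are preserved by weak convergence, I obtain the energy estimate $\bb E[(\mc Z_t^\varepsilon-\mc Z_t^\delta)^2]\le c\varepsilon t$ for the limiting field, exactly as in \eqref{en.est}. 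Together with the elementary bound $\bb E[(\mc Z_t^\varepsilon)^2]\le \chi(\rho)t^2/\varepsilon$ of \eqref{elem.est} (Cauchy--Schwarz plus the fact that $\mc Y_s$ is, at each fixed time, a white noise of variance $\chi(\rho)$), optimizing over the auxiliary scale by taking $\varepsilon=\sqrt t$ gives $\bb E[(\mc Z_t^\varepsilon)^2]\le ct^{3/2}$ uniformly in $\varepsilon$, as in Lemma \ref{l1}. Since $\{\mc Z_t^\varepsilon\}$ has stationary increments and is Gaussian, Kolmogorov--Centsov's criterion yields tightness in $\mc C([0,T];\bb R)$.

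The energy estimate also shows that for each fixed $t$ the family $\{\mc Z_t^\varepsilon\}_{\varepsilon>0}$ is a Cauchy sequence in $L^2(\bb P)$; its limit $\mc Z_t$ is therefore well defined, the limit point of the tight family is unique, and convergence in distribution of the whole process follows. Being an $L^2$-limit of Gaussian variables, $\mc Z_t$ is Gaussian, and the stationarity of $\{\mc Y_s;s\in[0,T]\}$ passes to the limit to give that $\{\mc Z_t;t\in[0,T]\}$ has stationary increments. It remains to identify the variance.

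The genuinely new ingredient is the time covariance of the stationary drifted Ornstein--Uhlenbeck field. Writing $v=a(1-2\rho)$ and letting $\{P_r\}_{r\ge0}$ be the semigroup generated by $\tfrac12\Delta-v\nabla$, so that $P_r u(x)=\int G_r(x-vr-y)u(y)\,dy$ with $G_r$ the centered Gaussian kernel of variance $r$, the Markov property and stationarity give, for $s\le s'$,
\[
\bb E[\mc Y_s(u)\mc Y_{s'}(w)]=\chi(\rho)\,\langle u,P_{s'-s}w\rangle .
\]
Integrating this covariance over $[0,t]^2$, using symmetry and the change of variables $r=s'-s$, I get
\[
\bb E[(\mc Z_t^\varepsilon)^2]=2\chi(\rho)\int_0^t (t-r)\,\langle i_\varepsilon,P_r i_\varepsilon\rangle\, dr .
\]
As $\varepsilon\to0$ the approximate identity concentrates at the origin, so that $\langle i_\varepsilon,P_r i_\varepsilon\rangle\to [P_r\delta_0](0)=G_r(-vr)=\tfrac{1}{\sqrt{2\pi r}}\,e^{-v^2r/2}$ for every fixed $r>0$, and passing to the limit yields
\[
\bb E[\mc Z_t^2]=2\chi(\rho)\int_0^t \frac{(t-s)}{\sqrt{2\pi s}}\,e^{-v^2 s/2}\,ds=\chi(\rho)\sqrt{\tfrac{2}{\pi}}\int_0^t \frac{(t-s)\,e^{-(a(1-2\rho))^2 s/2}}{\sqrt s}\,ds,
\]
which is exactly the claimed formula. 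The full covariance of the Gaussian process is then fixed by polarization and the stationary-increments property.

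The main obstacle is the interchange of the limit $\varepsilon\to0$ with the $r$-integration, because the limiting kernel develops an $r^{-1/2}$ singularity at $r=0$. I would control this in Fourier variables: with $\widehat{i_\varepsilon}(\xi)=\widehat h(\varepsilon\xi)$ one has $\langle i_\varepsilon,P_r i_\varepsilon\rangle=\tfrac{1}{2\pi}\int |\widehat h(\varepsilon\xi)|^2 e^{-r\xi^2/2}\cos(vr\xi)\,d\xi$, and the elementary inequality $|\widehat h(\xi)|^2=\tfrac{4\sin^2(\xi/2)}{\xi^2}\le1$ gives the bound $|\langle i_\varepsilon,P_r i_\varepsilon\rangle|\le \tfrac{1}{2\pi}\int e^{-r\xi^2/2}\,d\xi=\tfrac{1}{\sqrt{2\pi r}}$, uniformly in $\varepsilon$. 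Since $(t-r)\,r^{-1/2}$ is integrable on $[0,t]$, dominated convergence applies and the pointwise limit above is justified; alternatively, the already-established $L^2$-convergence $\mc Z_t^\varepsilon\to\mc Z_t$ forces $\bb E[(\mc Z_t^\varepsilon)^2]\to\bb E[\mc Z_t^2]$ directly. Either route completes the identification of the limiting variance and hence the proof.
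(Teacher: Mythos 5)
Your proposal is correct, and for the tightness/convergence part it follows exactly the route the paper intends: the paper's own ``proof'' of Theorem \ref{twasep1} is only the remark in Section \ref{s5.2} that the argument of Theorem \ref{t1} goes through because the symmetric part of the generator is the rate-$1/2$ symmetric exclusion uniformly in $n$, so Theorem \ref{t7}, the energy estimate \eqref{localenergy2}, Lemma \ref{l1}, Kolmogorov--Centsov tightness, the $L^2(\bb P)$-Cauchy property, and Gaussianity of the limit all transfer verbatim --- precisely your first two paragraphs. Where you genuinely add something is the identification of the variance: the paper proves Theorem \ref{t1} by a self-similarity characterization ($\mc Z_{\lambda t}=\lambda^{3/4}\mc Z_t$ in distribution, which pins down fractional Brownian motion), and that argument is unavailable here because the drift $a(1-2\rho)\nabla$ destroys the scale invariance of $\mc Y_t$; the paper simply states the variance formula without derivation. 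Your replacement --- computing $\bb E[\mc Y_s(u)\mc Y_{s'}(w)]=\chi(\rho)\<u,P_{s'-s}w\>$ from stationarity and the Markov property, integrating to get $\bb E[(\mc Z_t^\varepsilon)^2]=2\chi(\rho)\int_0^t(t-r)\<i_\varepsilon,P_r i_\varepsilon\>dr$, and passing to the limit with the uniform Fourier bound $|\<i_\varepsilon,P_r i_\varepsilon\>|\leq (2\pi r)^{-1/2}$ --- is sound, and the constants check out: $2\cdot(2\pi s)^{-1/2}=\sqrt{2/\pi}\,s^{-1/2}$ and $G_r(vr)=(2\pi r)^{-1/2}e^{-v^2r/2}$ reproduce the stated formula with $v=a(1-2\rho)$. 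Note also that your fallback observation is the cleanest closing step: since $\mc Z_t^\varepsilon\to\mc Z_t$ in $L^2(\bb P)$, the convergence $\bb E[(\mc Z_t^\varepsilon)^2]\to\bb E[\mc Z_t^2]$ is automatic, so the dominated-convergence justification is only needed to evaluate the limit of the explicit integrals, which your bound handles. The one point of sloppiness --- whether the dual semigroup carries drift $+v$ or $-v$ --- is immaterial here exactly as you implicitly use it, since both arguments of the inner product are $i_\varepsilon$ and the Gaussian kernel is even; if you wanted the full two-point covariance in space (not needed for this theorem), the sign would matter.
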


\begin{theorem}
 \label{twasep2}
Fix $\rho \in (0,1)$. Let us assume that $\eta_0^{\mathrm{ex}}$ is distributed according to $\nu_\rho$. The process $\{\Gamma_t^n; t \in [0,T]\}$ defined as
\[
 \Gamma_t^n = \frac{1}{n^{3/2}} \int_0^{tn^2} (\eta_s^{\mathrm{ex}}(0)-\rho) ds
\]
converges in distribution with respect to the uniform topology in $\mc C([0,T], \bb R)$ to the process $\{\mc Z_t; t \in [0,T]\}$ defined in Theorem \ref{twasep1}.
\end{theorem}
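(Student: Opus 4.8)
The plan is to transcribe the proof of Theorem \ref{t4} (in particular its specialization Theorem \ref{t3}) almost verbatim, making only two substitutions: replacing the scaling limit of the density field, Proposition \ref{p1}, by its weakly asymmetric counterpart, Proposition \ref{WASEP1} together with Theorem \ref{twasep1}; and checking that the local Boltzmann-Gibbs principle and the attendant energy estimates hold with constants \emph{uniform in the scaling parameter $n$}, despite the fact that the dynamics of $\eta_t^{\mathrm{ex}}$ now depends on $n$ through $a_n$. Throughout we take $f(\eta)=\eta(0)-\rho$, so that $\varphi_f(\rho)=0$ and $\varphi_f'(\rho)=1$, and $\Gamma_t^n$ coincides with $\Gamma_t^n(f)$.

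The $n$-uniformity is the only genuinely new ingredient, and it follows from the observation made in Section \ref{s5.1}: the $H_{-1}$-norm \eqref{h-1} involves only the symmetric part $S_n=(L_n+L_n^*)/2$ of the generator, since $\<g,-A_ng\>_\rho=0$ for the antisymmetric part $A_n$ and any local $g$. By the computation of $p_n^s$ in Section \ref{s5.2}, $S_n$ is the generator of the symmetric simple exclusion process with jump rate $1/2$, which is \emph{independent of $n$}. Hence the spectral gap inequality (Proposition \ref{p3}), Kipnis-Varadhan's inequality (Proposition \ref{p2}), and therefore Propositions \ref{p6} and \ref{p8}, the local Boltzmann-Gibbs principle (Theorem \ref{t7}) and Corollary \ref{cor1}, all hold for $\eta_t^{\mathrm{ex}}$ with constants not depending on $n$. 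Applying Corollary \ref{cor1} at time $tn^2$ with $\ell=\varepsilon n$ then yields, exactly as in \eqref{localenergy},
\[
\bb E_\rho\big[\big(\Gamma_t^n-\mc Z_t^{n,\varepsilon}\big)^2\big]\leq c\Big\{\varepsilon t+\frac{t^2}{\varepsilon^2 n}\Big\},
\qquad
\mc Z_t^{n,\varepsilon}:=\int_0^t\mc Y_s^n(i_\varepsilon)\,ds,
\]
where $\mc Y_s^n$ is the density field \eqref{densfi} of the weakly asymmetric process.

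With these estimates in hand, tightness of $\{\Gamma_t^n; t\in[0,T]\}_{n\in\bb N}$ follows as in Section \ref{s4.2}: the crude stationarity bound (obtained as in \eqref{badbound}, using only stationarity of $\nu_\rho$) and the estimate of Corollary \ref{cor1}, both uniform in $n$, combine (taking $\ell=n\sqrt t$) to give $\bb E_\rho[(\Gamma_t^n)^2]\leq ct^{3/2}$ for all $t\in[0,T]$, whence Kolmogorov-Centsov's criterion (the increments being stationary) yields tightness in $\mc C([0,T],\bb R)$. To identify any limit point $\{\Gamma_t\}$, I would realize it on the same space as the limiting field $\{\mc Y_t\}$ of Proposition \ref{WASEP1}; since $\mc Z_t^{n,\varepsilon}$ is a continuous functional of $\mc Y^n$, one has $\mc Z_t^{n,\varepsilon}\Rightarrow\mc Z_t^\varepsilon=\int_0^t\mc Y_s(i_\varepsilon)\,ds$, and passing to the limit in the displayed energy estimate (using that $L^2$-upper bounds survive weak convergence) gives $\bb E[(\Gamma_t-\mc Z_t^\varepsilon)^2]\leq c\varepsilon t$. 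Letting $\varepsilon\to0$ and invoking the $L^2$ convergence $\mc Z_t^\varepsilon\to\mc Z_t$ established in the proof of Theorem \ref{twasep1}, we conclude that $\Gamma_t$ has the same finite-dimensional distributions as $\mc Z_t$, so the limit point is unique and the whole sequence converges to $\{\mc Z_t; t\in[0,T]\}$. The main obstacle is thus entirely contained in the second paragraph — verifying the $n$-uniform energy estimate — and, as explained, it dissolves once everything is reduced to the fixed symmetric part $S_n$; the rest is a faithful copy of Sections \ref{s4.1}--\ref{s4.2}.
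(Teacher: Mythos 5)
Your proposal is correct and coincides with the paper's own argument: the paper proves Theorem \ref{twasep2} exactly by observing (Sections \ref{s5.1}--\ref{s5.2}) that the $H_{-1}$-norm \eqref{h-1} sees only the symmetric part of the generator, which for the weakly asymmetric process is rate-$1/2$ symmetric simple exclusion independent of $n$, so that Kipnis--Varadhan's inequality, the spectral gap and hence the local Boltzmann--Gibbs principle hold with $n$-uniform constants, after which the proof of Theorem \ref{t4} is repeated verbatim with Proposition \ref{WASEP1} and Theorem \ref{twasep1} replacing Proposition \ref{p1} and Theorem \ref{t1}. Your tightness and limit-identification steps are likewise faithful copies of the corresponding passages in Section \ref{s4}, so there is nothing to add.
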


\subsection{Weakly asymmetric exclusion process and KPZ equation}

In the previous section, we showed how to obtain the scaling limit of additive functionals of a weakly asymmetric exclusion process. Notice that when $\rho =1/2$, the processes appearing in Proposition \ref{WASEP1} and Theorems \ref{twasep1}, \ref{twasep2} are the same appearing in the symmetric case. In other words, the introduction of a weak asymmetry does not change the scaling limits of the density of particles and of the occupation time. Therefore, it makes sense to ask whether a slower decay of the asymmetry (that is, a {\em stronger} weak asymmetry) will change the limiting processes. It turns out that for density $\rho =1/2$, the relevant scaling of the asymmetry is $1/\sqrt n$ instead of $1/n$. Let us assume in this section that $\rho =1/2$ and that there is a constant $a \in \bb R$ such that $\sqrt n a_n \to a$ as $n \to \infty$. The following theorem has been proved in \cite{BG} (see also \cite{ACQ}, \cite{GJ}):

\begin{proposition}
 \label{p5.2}
Fix $\rho =1/2$. Suppose that $\eta_0^n$ is distributed according to $\nu_\rho$ and suppose that $\lim_{n \to \infty} \sqrt{n} a_n =a \in \bb R$. Define the density field $\{\mc Y_t^n; t \in [0,1]\}$ as in \eqref{densfi}. Then the process $\{\mc Y_t^n; t \in [0,1]\}$ converges in distribution with respect to the $J_1$-Skorohod topology of $\mc D([0,T];\mc S'(\bb R))$ to the stationary Hopf-Cole solution of the stochastic Burgers equation
\begin{equation}
 \label{kpz}
 d \mc Y_t = \frac{1}{2}\Delta \mc Y_t dt + a\big(\nabla \mc Y_t\big)^2 dt + \sqrt{\chi(\rho)} \nabla d\mc M_t.
\end{equation}
\end{proposition}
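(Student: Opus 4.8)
The plan is to follow the microscopic Hopf--Cole (G\"artner) approach of \cite{BG}, since the statement explicitly concerns the \emph{Hopf-Cole solution} of \eqref{kpz}. First I would pass from the density field to the associated height field: writing $h_t^n(x) = \sum_{y \leq x}\big(\eta_{tn^2}^{\mathrm{ex}}(y) - \tfrac12\big)$ (suitably centered), the field $\mc Y_t^n$ of \eqref{densfi} is, up to scaling, the discrete gradient of $h_t^n$. The crucial step is then to introduce the exponential transform
\[
 \zeta_t^n(x) = \exp\Big\{ -\lambda_n h_t^n(x) + c_n t\Big\},
\]
with the parameters $\lambda_n$ and $c_n$ tuned to the asymmetry $a_n$ (one takes $\lambda_n \sim 2 a_n$ and $c_n$ the corresponding Lyapunov-type shift). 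The key algebraic identity, due to G\"artner, is that this change of variables \emph{exactly linearizes} the generator: a direct computation on the exponential shows that the quadratic Burgers nonlinearity $a(\nabla \mc Y_t)^2$ is absorbed, so that $\zeta_t^n$ solves a \emph{closed, linear} stochastic evolution equation
\[
 d\zeta_t^n = \mc A_n \zeta_t^n \, dt + d\mc N_t^n,
\]
where $\mc A_n$ is a discrete Laplacian-type operator (converging to $\tfrac12\Delta$ under the diffusive scaling) and $\mc N_t^n$ is a martingale.

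The second step is to identify the limit of $\{\zeta_t^n\}$. Rescaling space by $1/n$ and the amplitude appropriately, I would prove tightness of the rescaled fields in $\mc C([0,T],\mc C(\bb R))$ and characterize every limit point through the martingale problem associated with the multiplicative-noise stochastic heat equation
\[
 dZ_t = \tfrac12 \Delta Z_t \, dt + \sqrt{\chi(\rho)}\, Z_t \, d\mc W_t.
\]
The quadratic variation of $\mc N_t^n$ is a sum of local functions of the configuration; the content here is to show, via a local-averaging (Boltzmann--Gibbs-type) argument together with the stationarity of $\nu_\rho$, that it converges to $\chi(\rho)\int_0^t Z_s^2 \, ds$, the bracket of the multiplicative noise. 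Uniqueness of solutions to the stochastic heat equation (Walsh--Mueller theory) then upgrades the convergence of finite-dimensional distributions to convergence in law of the whole process $\zeta^n \to Z$.

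The third step is to invert the transformation. By Mueller's strict-positivity theorem, $Z_t > 0$ almost surely, so the field $\mc H_t = -\lambda^{-1}\log Z_t$ is well defined and, by definition, is the Hopf--Cole solution of the KPZ equation; its spatial derivative is the stationary Hopf--Cole solution of the Burgers equation \eqref{kpz}. Transferring the convergence $\zeta^n \to Z$ through the maps $\log$ and $\nabla$ (justified by the uniform positivity and regularity estimates obtained along the way) yields the convergence of $\mc Y_t^n$ to the stated limit. Stationarity of the limit follows from the invariance of $\nu_\rho$ under $\eta_t^{\mathrm{ex}}$, which at $\rho = 1/2$ corresponds precisely to the white-noise initial data that makes $\nabla \mc H_t$ stationary.

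I expect the main obstacle to be the treatment of the noise term rather than the nonlinearity: the Hopf--Cole linearization disposes of the quadratic drift exactly, but controlling the martingale $\mc N_t^n$ --- proving that its quadratic variation self-averages to the multiplicative form $\chi(\rho) Z_t^2$ and that no anomalous correction survives the limit $n \to \infty$ --- requires sharp uniform (exponential) moment bounds on $\zeta_t^n$ and a careful local replacement. Equally delicate is justifying the inversion: one needs uniform-in-$n$ lower bounds keeping $\zeta^n$ away from zero, so that the map $Z \mapsto \log Z$ can be applied in the limit without losing control of the fluctuations.
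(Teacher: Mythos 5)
Your proposal is correct and coincides with the proof the paper itself relies on: Proposition \ref{p5.2} is not proved in this paper but quoted from Bertini--Giacomin \cite{BG}, whose argument is exactly the microscopic Hopf--Cole (G\"artner) linearization of the height field, identification of the limit with the multiplicative-noise stochastic heat equation via tightness and a martingale problem, and inversion through Mueller's strict positivity, as you outline. The only slip is cosmetic: the noise coefficient of the limiting heat equation should carry the Hopf--Cole parameter $\lambda_n\sim 2a_n$ (so the bracket is of order $4a^2\chi(\rho)\int_0^t Z_s^2\,ds$ rather than $\chi(\rho)\int_0^t Z_s^2\,ds$), which is absorbed by the amplitude normalization you leave implicit.
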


We refer to \cite{BG} for a detailed discussion about Hopf-Cole solutions of \eqref{kpz}. Equation \eqref{kpz} corresponds to the formal gradient of the so-called {\em KPZ equation}. We refer to \cite{GJ} for a more detailed discussion. Of course we use the additional adjective ``stationary'' meaning that the fixed-time distributions of $\{\mc Y_t; t \in [0,T]\}$ are independent of $t$.

For the stationary solution $\{\mc Y_t; t \in [0,T]\}$ of the stochastic Burgers equation \eqref{kpz}, Theorems \ref{t2} and \ref{t5} were proved in \cite{GJ}. Notice that the proof of Theorem \ref{t1} is robust in the sense that it only requires the local Boltzmann-Gibbs principle and the convergence of the density field. Therefore, Theorem \ref{t1} also holds for the stationary solution of the stochastic Burgers equation \eqref{kpz}. This fact is a novelty in the literature, and we state it as a theorem:

\begin{theorem}
 \label{t1forkpz}
Let $\{\mc Y_t; t \in [0,T]\}$ be the stationary Hopf-Cole solution of the stochastic Burgers equation \eqref{kpz}. For $\varepsilon >0$, let us define
\[
 \mc Z_t^ \varepsilon = \int_0^ t \mc Y_s(i_\varepsilon) ds.
\]
Then there exists a real-valued process $\{\mc Z_t; t \in [0,T]\}$ such that $\{\mc Z_t^\varepsilon; t \in [0,T]\}$ converges in distribution with respect to the uniform topology of $\mc C([0,T]; \bb R)$ to $\{\mc Z_t; t \in [0,T]\}$.
\end{theorem}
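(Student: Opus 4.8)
The plan is to transcribe the proof of Theorem \ref{t1} carried out in Section \ref{s4.1}, replacing the convergence of the density field to the Ornstein-Uhlenbeck process (Proposition \ref{p1}) by its convergence to the stationary Hopf-Cole solution of \eqref{kpz} (Proposition \ref{p5.2}), and halting the argument as soon as existence and uniqueness of the limit have been obtained. The essential difference with Theorem \ref{t1} is that in the KPZ regime the field $\{\mc Y_t; t \in [0,T]\}$ is no longer Gaussian, so the Gaussianity and $3/4$-self-similarity used at the end of Section \ref{s4.1} to identify the limit as a fractional Brownian motion are unavailable; only the mere existence of the limit process can be asserted, which is precisely what the statement claims. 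As in Section \ref{s4.1}, for each $\varepsilon>0$ and $n \in \bb N$ I set $\mc Z_t^{n,\varepsilon} = \int_0^t \mc Y_s^n(i_\varepsilon) ds$, where $\mc Y_t^n$ is the density field \eqref{densfi} of the weakly asymmetric exclusion process at $\rho = 1/2$ with $\sqrt n\, a_n \to a$.

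First I would establish the energy estimate \eqref{localenergy2} in the present setting. Applying case (i) of the local Boltzmann-Gibbs principle (Theorem \ref{t7}) with $\ell = \varepsilon n$ on the time window $[0,tn^2]$ to $f(\eta)=\eta(0)-\rho$, for which $\varphi_f(\rho)=0$ and $\varphi_f'(\rho)=1\neq 0$, and then invoking the triangle inequality exactly as in \eqref{localenergy}--\eqref{localenergy2}, gives
\[
\bb E_\rho\big[\big(\mc Z_t^{n,\varepsilon}-\mc Z_t^{n,\delta}\big)^2\big] \leq c\Big\{\varepsilon t + \frac{t^2}{\delta^2 n}\Big\}
\]
for $\varepsilon>\delta>2/n$, with $c$ independent of $n$. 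Letting $n\to\infty$ and using that $L^2$-upper bounds are preserved by the weak convergence of Proposition \ref{p5.2}, I obtain $\bb E[(\mc Z_t^\varepsilon-\mc Z_t^\delta)^2]\leq c\varepsilon t$, the analogue of \eqref{en.est}. Since the fixed-time marginal of the stationary solution of \eqref{kpz} is a white noise of variance $\chi(\rho)$, the Cauchy-Schwarz inequality yields $\bb E[(\mc Z_t^\varepsilon)^2]\leq \chi(\rho)t^2/\varepsilon$, the analogue of \eqref{elem.est}. Combining these two bounds, optimizing over $\varepsilon$ by taking $\varepsilon=\sqrt t$ for $t\geq\delta^2$ and treating $t<\delta^2$ directly, and finally appealing to stationarity of $\mc Y_t$, reproduces the proof of Lemma \ref{l1} and gives $\bb E[(\mc Z_t^\varepsilon-\mc Z_s^\varepsilon)^2]\leq c|t-s|^{3/2}$ uniformly in $\varepsilon$.

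From here the conclusion follows as in Section \ref{s4.1}. This moment bound together with Kolmogorov-Centsov's compactness criterion (Problem 2.4.11 of \cite{KS}) gives tightness of $\{\mc Z_t^\varepsilon; t\in[0,T]\}_{\varepsilon>0}$ in $\mc C([0,T],\bb R)$, while the estimate $\bb E[(\mc Z_t^\varepsilon-\mc Z_t^\delta)^2]\leq c\varepsilon t$ shows that for each fixed $t$ the family $\{\mc Z_t^\varepsilon\}_\varepsilon$ is Cauchy in $L^2(\bb P)$; its limit pins down the finite-dimensional distributions of any limit point, so the whole family converges in distribution to a single process $\{\mc Z_t; t\in[0,T]\}$. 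This is where I would stop, leaving the law of $\mc Z_t$ uncharacterized.

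The step demanding the most care, and the one I would verify first, is that the local Boltzmann-Gibbs estimate of Theorem \ref{t7} holds with constants independent of $n$ for the weakly asymmetric dynamics, whose generator $L_n$ itself depends on $n$ through $a_n\sim 1/\sqrt n$. As explained in Section \ref{s5.1}, the $H_{-1}$-norm entering Kipnis-Varadhan's inequality (Proposition \ref{p2}) is governed only by the symmetric part $S_n=(L_n+L_n^*)/2$, since the antisymmetric part contributes nothing to the quadratic form $\<g,-L_ng\>_\rho$ in \eqref{h-1}. For the kernels $p_n(\cdot)$ the symmetric part $p_n^s(\cdot)$ is the simple symmetric exclusion kernel regardless of $a_n$, so the spectral gap inequality (Proposition \ref{p3}) and the chain of estimates in Propositions \ref{p4}--\ref{p8}, hence Lemmas \ref{one-block}--\ref{lema4}, transfer with $n$-independent constants. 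The only genuinely non-mechanical point is confirming that the non-reversible form of Kipnis-Varadhan's inequality \cite{CLO,SVY} applies uniformly along the sequence of generators $L_n$; once this is secured, the entire argument above is a transcription of Section \ref{s4.1}.
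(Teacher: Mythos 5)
Your proposal is correct and takes essentially the same route as the paper, which proves Theorem \ref{t1forkpz} precisely by remarking that the proof of Theorem \ref{t1} is robust: it uses only the local Boltzmann-Gibbs principle and the convergence of the density field (here Proposition \ref{p5.2} in place of Proposition \ref{p1}), and one stops before the Gaussianity and $3/4$-self-similarity step since the limit law can no longer be identified. Your verification that the constants in Theorem \ref{t7} are uniform in $n$ --- because the $H_{-1}$-norm sees only the symmetric part $S_n$, which is the simple symmetric exclusion generator independently of $a_n$, together with the non-reversible Kipnis-Varadhan inequality of \cite{CLO,SVY} --- is exactly the justification the paper gives in Sections \ref{s5.1} and \ref{s5.2}.
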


The analog of Theorem \ref{t4} also holds for the process $\{\mc Y_t;t \in [0,T]\}$:

\begin{theorem}
 \label{t4forkpz}
Let $f: \Omega \to \bb R$ a local function with $\varphi_f(1/2) =0$. The process $\{\Gamma_t^n(f); t \in [0,T]\}$ defined as
\[
 \Gamma_t^n(f) = \frac{1}{n^{3/2}} \int_0^{tn^2} f(\eta_s) ds
\]
converges in distribution with respect to the uniform topology of $\mc C([0,T];\bb R)$ to the process $\{ \varphi_f'(1/2)\mc Z_t; t \in [0,T]\}$, where $\mc Z_t$ is the process defined in Theorem \ref{t1forkpz}.
\end{theorem}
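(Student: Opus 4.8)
The plan is to repeat, essentially verbatim, the proof of Theorem \ref{t4}, now with $\rho = 1/2$ and with the Ornstein--Uhlenbeck limit replaced by the stationary Hopf--Cole solution of \eqref{kpz}. Throughout I would work with the weakly asymmetric exclusion process $\{\eta_t^{\mathrm{ex}}; t \geq 0\}$ associated to $p_n(\cdot)$, keeping $n$ fixed until the very end, and I abbreviate $\mc Z_t^{n,\varepsilon} = \int_0^t \mc Y_s^n(i_\varepsilon)\, ds$. The argument splits, as before, into securing an $n$-uniform local energy estimate, proving tightness by a moment bound, and identifying the limit through an approximation in $\varepsilon$.

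The first and really the only delicate point is to secure the local energy estimate
\[
\bb E_\rho\big[\big(\Gamma_t^n(f) - \varphi_f'(1/2)\,\mc Z_t^{n,\varepsilon}\big)^2\big] \leq c\Big\{\varepsilon t + \frac{t^2}{\varepsilon^2 n}\Big\},
\]
the analogue of \eqref{localenergy}, uniformly in $n$. This is obtained by applying the local Boltzmann--Gibbs principle (Theorem \ref{t7}, or its reformulation in Corollary \ref{cor1}) at time $tn^2$ with $\ell = \varepsilon n$ and dividing by $n^3$. The main obstacle is that Theorem \ref{t7} was proved for the reversible lattice gas, whereas here the dynamics is non-reversible and, worse, depends on $n$ through $p_n$. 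As observed in Section \ref{s5.1}, the $H_{-1}$-norm \eqref{h-1} and Kipnis--Varadhan's inequality (Proposition \ref{p2}) are insensitive to the antisymmetric part of the generator: they depend only on the symmetric part $S = (L_{\mathrm{ex}}+L_{\mathrm{ex}}^*)/2$. For $p_n$ one has $p_n^s(z)=\tfrac12$ for $z=\pm 1$ and $p_n^s(z)=0$ otherwise, independently of $n$, so $S$ is the generator of the symmetric simple exclusion process for every $n$, and the spectral gap estimate of Proposition \ref{p3} holds with an $n$-independent constant. Consequently every input of the proof of Theorem \ref{t7}---namely Propositions \ref{p6}, \ref{p7} and \ref{p8}---carries over with constants uniform in $n$, which yields the displayed bound.

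Granting this estimate, tightness follows exactly as for Theorem \ref{t4}. Combining Corollary \ref{cor1} with Proposition \ref{p6} gives the moment bound $\bb E_\rho[(\Gamma_t^n(f))^2] \leq c(t\ell/n + t^2 n/\ell)$ for every $\ell \geq \ell_0$, which optimised at $\ell = n\sqrt t$ reads $c\,t^{3/2}$ for $t \geq \ell_0^2/n^2$, and is extended to short times $t < \ell_0^2/n^2$ by the crude Cauchy--Schwarz bound $\bb E_\rho[(\Gamma_t^n(f))^2] \leq c\,t^2 n$, exactly as in the derivation of \eqref{momentbound}. Since $\{\Gamma_t^n(f)\}$ has stationary increments, Kolmogorov--Centsov's criterion then gives tightness with respect to the uniform topology of $\mc C([0,T],\bb R)$.

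Finally I would identify the limit. Let $\{\Gamma_t(f)\}$ be a limit point, realised on the same probability space as $\{\mc Y_t\}$. By Proposition \ref{p5.2} the density fields converge, $\mc Y_t^n \to \mc Y_t$, hence $\mc Z_t^{n,\varepsilon} \to \mc Z_t^\varepsilon$ by continuity of the map $\mc Y \mapsto \int_0^\cdot \mc Y_s(i_\varepsilon)\,ds$ (note that $i_\varepsilon \in L^2(\bb R)$ and that $\mc Y_s$ extends to $L^2$ test functions, its fixed-time law being a white noise of variance $\chi(\rho)$). Passing to the limit $n \to \infty$ in the energy estimate, and using that $L^2$-upper bounds are preserved under convergence in distribution, gives $\bb E[(\Gamma_t(f) - \varphi_f'(1/2)\,\mc Z_t^\varepsilon)^2] \leq c\,\varepsilon t$. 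Letting $\varepsilon \to 0$ and invoking the $L^2$-convergence $\mc Z_t^\varepsilon \to \mc Z_t$ furnished by the proof of Theorem \ref{t1forkpz} (which rests on the same $n$-uniform energy estimate \eqref{localenergy2}) identifies $\Gamma_t(f) = \varphi_f'(1/2)\,\mc Z_t$ in the sense of finite-dimensional distributions. This pins down the unique limit point and completes the proof.
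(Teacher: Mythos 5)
Your proposal is correct and follows exactly the route the paper intends for Theorem \ref{t4forkpz}: the paper's (implicit) proof is precisely the observation that the argument for Theorem \ref{t4} is robust, needing only the local Boltzmann--Gibbs principle---whose inputs (Kipnis--Varadhan's inequality and the spectral gap through the $H_{-1}$-norm \eqref{h-1}) depend only on the symmetric part $p_n^s$, which is the $n$-independent symmetric simple exclusion---together with the convergence of the density field to the stationary Hopf--Cole solution given in Proposition \ref{p5.2}, and the $L^2(\bb P)$-convergence $\mc Z_t^\varepsilon \to \mc Z_t$ supplied by the proof of Theorem \ref{t1forkpz}. You spell out the uniformity in $n$ of the energy estimate, the tightness via \eqref{badbound}--\eqref{momentbound}, and the identification of the limit at the same level of rigor as the paper itself, so there is nothing to add.
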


\begin{remark}
In this section we chose $\rho =1/2$ because in this case the drift of the Ornstein-Uhlenbeck process appearing in Proposition \ref{WASEP1} vanishes. When the drift does not vanish, we can perform a Galilean transformation of the density field in such a way that, under the new reference system, the fluctuations of the density are governed by a driftless Ornstein-Uhlenbeck process. Under this new system of reference, Proposition \ref{p5.2} and Theorems \ref{t1forkpz}, \ref{t4forkpz} also hold (see Section 2.2 of \cite{GJ}).
\end{remark}

\section*{Acknowledgements}
P.G. thanks the hospitality of Universit\'e Paris-Dauphine, where this work was initiated; IMPA and Courant Institute of Mathematical Sciences, where this work was finished. P.G. thanks FCT for the research project PTDC/MAT/109844/2009: "Non-Equilibrium Statistical Physics".  PG thanks the Research Centre of Mathematics of the University of Minho, for the financial support provided by "FEDER" through the "Programa Operacional Factores de Competitividade – COMPETE" and by FCT through the research project PEst-C/MAT/UI0013/2011.

M.J. would like to thank the warm hospitality of the Fields Institute.

We are grateful to Capes and FCT for the reseach project: "Non-Equilibrium Statistical Mechanics of Stochastic Lattice Systems" with reference FCT 291/11.

\end{document}